\documentclass[a4paper,12pt]{article}
\usepackage{amssymb,amsmath,amsthm}
\usepackage{bm}%
\usepackage{stmaryrd}
\usepackage[pdftex]{hyperref}
\hypersetup{
	colorlinks=true, 
	citecolor=blue,
	linkcolor=blue,
	urlcolor=orange,
}
\usepackage{tabularray}
\usepackage{tikz,tikz-cd}
  \usetikzlibrary{shapes,snakes,calc,knots,patterns}

\setlength{\textheight}{23cm}
\setlength{\textwidth}{16cm}
\setlength{\topmargin}{0cm}
\setlength{\headheight}{0pt}
\setlength{\oddsidemargin}{0pt}
\setlength{\evensidemargin}{0pt}

\def\Z2{\mathbb{Z}_2^2}
\def\g{\mathfrak{g}}
\def\DP#1#2{\hat{#1}\cdot \hat{#2}}
\def\ph#1#2{(-1)^{\hat{#1}\cdot\hat{#2}}}
\def\f#1#2{f_{#1}^{\ \ #2}}
\def\tf#1#2{\tilde{f}_{#1}^{\ \ #2}}

\newtheorem{lemma}{Lemma}[section]
\newtheorem{prop}[lemma]{Proposition}
\newtheorem{thm}[lemma]{Theorem}
\newtheorem{DEF}[lemma]{Definition}
\newtheorem{cor}[lemma]{Corollary}

\begin{document}

\begin{titlepage}
	\title{Universal weight systems from a minimal $\mathbb{Z}_2^2$-graded Lie algebra}
	\date{\today}
	\author{N. Aizawa, \and Daichi Kimura}
	\maketitle
	\begin{center}
		\textit{Department of Physics, Graduate School of Science,
			\\
			Osaka Metropolitan University, 
			\\Nakamozu Campus, 
			Sakai, Osaka 599-8531 Japan}
	\end{center}
	
	\vfill
\begin{abstract}
Color Lie algebras, which were introduced by Ree, are a graded extension of Lie (super)algebras by an abelian group. 
We show that the color Lie algebras can be used to construct universal weight systems for knot invariants of of Vassiliev and Kontsevich. 
As a simple example, we take $\mathbb{Z}_2 \times \mathbb{Z}_2$ as the grading group  and consider the four-dimensional color Lie algebra called $A1_{\epsilon}$. 
The weight system constructed from $A1_{\epsilon}$ is studied in some detail and some relations between the weights, such as the recurrence relation for chord diagrams, are derived. 
These relations show that the weight system from $A1_{\epsilon}$ is a hybrid of those from $sl(2)$ and $gl(1|1)$.

\end{abstract}
	
	\thispagestyle{empty}
\end{titlepage}

\clearpage
\setcounter{page}{1}
\section{Introduction}

The purpose of the present work is to make a connection between the knot invariants of Vassiliev and Kontsevich and the color Lie algebras. The value of a Vassiliev invariant is determined once those of some singular knots are given, which in turn depends only on the chord diagram of the singular knots \cite{Va1}. 
The Kontsevich invariant is a quite powerful topological invariant because it is universal among the Vassiliev invariants, in the sense that any Vassiliev invariant is obtained from the Kontsevich invariant by some projection. The Kontsevich invariant belongs to the space of Jacobi diagrams, and the Jacobi and chord diagrams are related by the STU relation. Thus, if we want to evaluate the values of these invariants, we need a map from the chord and Jacobi diagrams to some numbers. Such map is called a \textit{weight system} and it is known that weight systems are constructed by using Lie (super)algebras. There are some good monographs on these subjects, we refer the reader to see them for more details \cite{Ohtsuki,ChDuMo,JacMo}.  

The color Lie algebra (a.k.a. $\epsilon$-Lie algebra) is a generalization of the Lie (super)algebras and was first introduced  by Ree in 1960 \cite{Ree}. It is a very general class of algebras and includes Lie and Lie superalgebras as subclasses. Later, in the 1970s, another subclass of  color Lie algebras was rediscovered by Rittenberg and Wyler \cite{rw1,rw2} (see also \cite{sch}). Since then, the algebraic structure has been studied continuously (see \cite{StoVDJ4,Ryan2} for recent developments) and that of  Rittenberg and Wyler has recently attracted renewed interest in connection with higher supergeometry (see \cite{Pz2nint,PonSch} for a concise review), new symmetries in physics \cite{aktt1,aktt2,Ryan1,Bruce,BruDup,Topp} and new integrable systems characterized by color Lie algebras \cite{bruSG,ait,niktt}.

We will see that color Lie algebras are used to construct weight system, which is one of our main results (Proposition \ref{PROP:ColorS}). 
In fact, this is easily seen by checking that the color Lie algebra satisfies the definition  of the $S$-Lie algebra introduced by Vaintrob \cite{Vaintrob}. The $S$-Lie algebra is defined in such a way that  it has sufficient algebraic structure to construct weight system. 
As an example, we take one of the minimal color Lie algebras $A1_{\epsilon}$ (introduced in \cite{FZminimal}) and consider the  universal weight system constructed from $A1_{\epsilon}$. 
The reason of this choice is that $A1_{\epsilon}$ is a simple and non-trivial example of color Lie algebras. It will be shown that the $A1_{\epsilon}$ weight system is a two-variable polynomial and we derive the recurrence relation of the weight systems (Theorem \ref{THM:recrel}).  
It then turns out that the $A1_{\epsilon}$ weight system is a hybrid of  the $sl(2)$ and $gl(1|1)$ weight systems investigated in \cite{ChVar} and \cite{FOKV}, respectively. 
Thus, weight systems obtained from color Lie algebras will be in general different from those of Lie (super)algebras and non-trivial.
 
A weight system coming from an algebra is said to be universal if it maps a Jacobi (chord) diagram to a center of the enveloping algebra. The universal weight systems allow us to have more invariants depending on the choice of a representation. However, computing  universal weight systems by the procedure of \cite{Vaintrob} becomes more difficult as the dimension of the algebra increases. 
If we want  more examples of the universal weight systems, color Lie algebras offer the possibility beyond Lie (super)algebras.  
Another approach to construct universal weight systems has been proposed and applied to $gl(n)$ and $ gl(n|m)$ \cite{Yang1,Yang2}. 

We plan this paper as follows: Most parts of the first two sections are a review. 
In \S \ref{SEC:SLie}, we give the definition of $S$-Lie algebras and explain how to construct weight systems from them. 
In \S \ref{SEC:colorLie}, the definition of color Lie algebras is given and we observe that the color Lie algebras are one of the examples of $S$-Lie algebras. This allows us to construct weight systems using color Lie algebras by the procedure described in \S \ref{SEC:SLie}. 
As an example of the color Lie algebras, we take a $\Z2$-graded Lie algebra which is called $ A1_{\epsilon} $ and show that it has a non-degenerate invariant bilinear form. 
We study the weight systems constructed from $A1_{\epsilon} $ for framed knots in details in \S \ref{SEC:A1WSframe}.  We present some relations which are useful for calculating the weight of a given diagram and they are proved in \S \ref{SEC:Proof}. 
In \S \ref{SEC:deframing}, we show that weight system for ordinary (unframed) knots are a specialization of that for framed knots. Finally, we summarize our results and give some remarks to conclude the paper in \S \ref{SEC:CR}. To have the invariant bilinear form for $A1_{\epsilon}$, a theory of invariant bilinear forms on $\Z2$-graded Lie algebras is developed in Appendix.

\section{$S$-Lie algebra weight system} \label{SEC:SLie}

This section is a brief review of the $S$-Lie algebra and the weight system constructed from it \cite{Vaintrob}.

\subsection{Definition of $S$-Lie algebras}

 The presentation of this subsection is slightly different from the original one in \cite{Vaintrob}.

\begin{DEF} \label{DEF:SLie}
	Let $L$ be a vector space over the field $\mathbb{K}$ of characteristic zero. 
	$L$ is referred to as a $S$-Lie algebra if it admits two bilinear mappings
	\begin{equation}
		f : L \times L \to L, \qquad S : L \times L \to L \times L
	\end{equation}
	satisfying the following identities
	\begin{enumerate}
		\item $S^2 = id_{L\otimes L}$
		\item $ S_{12} S_{23} S_{12} = S_{23} S_{12} S_{23}$
		\item $ S f_{12} = f_{23} S_{12} S_{23}$
		\item $ S f_{23} = f_{12} S_{23} S_{12}$
		\item $ f S = -f$
		\item $ f f_{12} = f f_{23} - f f_{23} S_{12}$
	\end{enumerate}
	where $ S_{12} $ and $ S_{23}$ stand for the action of $ S \otimes id$, $ id \otimes S$ on $ L \otimes L \otimes L, $ respectively.  $ f_{12}, f_{23}$ are understood similarly. 
\end{DEF} 

The relations in this definition are realized pictorially by Jacobi diagrams. 
A Jacobi diagram on the circle $S^1$ is a uni-trivalent graph (each  vertex is either univalent or trivalent) whose univalent vertices are distinct points on $S^1$. If $S^1$ has an orientation, the trivalent vertices are also oriented according to the orientation of $S^1$. 
Some examples are shown below:
\begin{center}
	\begin{tikzpicture}[scale=1.4]
		\draw[thick] (0,-.5) coordinate(A) -- (0,0); 
			\fill (A) circle (1.1pt); 
		\draw[thick] (-.433,.25) coordinate(B) -- (0,0) -- (.433,.25) coordinate(C);
			\fill (B) circle (1.1pt); \fill (C) circle (1.1pt);
		\draw[very thick] (0,0) circle (.5);
		\begin{scope}[xshift=2.5cm]
			\draw[thick] (0,-.5) coordinate(A) -- (0,0);
			\draw[thick] (-.25,.433) coordinate(B) -- (0,0) -- (.25,.433) coordinate(C);
			\draw[thick] (.4,-.3) coordinate(D) to [out=150,in=30] (-.4,-.3) coordinate(E);
			\draw[very thick] (0,0) circle (.5);
			\foreach \label in {A,B,C,D,E} 
				\fill (\label) circle (1.1pt);
		\end{scope}
		\begin{scope}[xshift=5cm]
			\draw[thick] (-.15,0)  -- (.15,0) ;
			\draw[thick] (.3535,.3535) coordinate(A) -- (.15,0) -- (.3535,-.3535) coordinate(B);
			\draw[thick] (-.3535,.3535) coordinate(C) -- (-.15,0) -- (-.3535,-.3535) coordinate(D);
			\draw[very thick] (0,0) circle (.5);
			\foreach \label in {A,B,C,D}
				\fill (\label) circle (1.1pt);
		\end{scope}
	\end{tikzpicture}
\end{center}
The two bilinear mappings $ f $ and $ S $ are represented as follows:
\begin{center}
	\begin{tikzpicture}[scale=0.8]
		\draw[ultra thick] (0,2.5) coordinate(ul)  -- (3,2.5) coordinate (ur);
		\draw[very thick] (0,0) coordinate(O) -- (3,0) coordinate(lr);
		\draw[thick] ($(ul)+(0.5,0)$) parabola[bend at end] (1.5,1.3) coordinate(v) parabola ($(ur)+(-0.5,0)$);
		\draw[thick] (v) -- (1.5,0);
		\node at ($(v)+(-2.2,0) $) {$f = $}; 
		\begin{scope}[xshift=6cm]
		  \draw[very thick] (0,2.5) coordinate(ul)  -- (3,2.5) coordinate (ur);
		  \draw[very thick] (0,0) coordinate(O) -- (3,0) coordinate(lr);	
		  \draw[thick] ($(ul)+(0.5,0)$)--($(lr)+(-0.5,0)$); 
		  \draw[thick] ($(ur)+(-0.5,0)$)--($(O)+(0.5,0)$);
		  \node at (-0.7,1.3) {$S = $};
		\end{scope}
		%
	\end{tikzpicture}
\end{center}
where two horizontal parallel lines are segments of $S^1$ and the  crossing of the mapping $S$ is not a vertex. 
The univalent vertex corresponds to an element of $L$ and the trivalent vertex represents a product of two elements of $L$. 
The composition of two maps is represented by connecting two graphs:
\begin{center}
	\begin{tikzpicture}[scale=0.6]
		\draw[very thick] (0,4) coordinate(ul)  -- (3,4) coordinate (ur);
		\draw[opacity=0] (0,2) coordinate(O) -- (3,2) coordinate(lr);	
		\draw[thick] ($(ul)+(0.5,0)$)--($(lr)+(-0.5,0)$); 
		\draw[thick] ($(ur)+(-0.5,0)$)--($(O)+(0.5,0)$);
		\draw[thick] ($(ul)+(0.5,-2)$) parabola[bend at end] (1.5,1.3) coordinate(v) parabola ($(ur)+(-0.5,-2)$);
		\draw[thick] (v) -- (1.5,0);
		\draw[very thick] (0,0)--(3,0);
		\node at ($(O)+(-1.4,0) $) {$f S= $}; 
	\end{tikzpicture}
\end{center}
The defining relations of the $S$-Lie algebra are shown as follows:
\begin{center}
	\begin{tikzpicture}[scale=0.9]
		%
		%
		\coordinate (ul) at (0,2); \coordinate (ur) at (1.2,2);
		\coordinate (ll) at (0,0); \coordinate (lr) at (1.2,0); 
		\draw[thick] (ul) .. controls($(ul)!0.5!(ll)+(1.2,0)$) .. (ll); 
		\draw[thick] (ur) .. controls($(ur)!0.5!(lr)+(-1.2,0)$) .. (lr); 
		\node at ($(ur)!0.5!(lr)+(1,0)$) {$=$};
		\node at ($(ur)!0.5!(lr)+(1,-1.6)$) {$S^2=id_{L\otimes L}$};
		\begin{scope}[xshift=3cm]
			\coordinate (ul) at (0,2); \coordinate (ur) at (0.8,2);
			\coordinate (ll) at (0,0); \coordinate (lr) at (0.8,0);
			\draw[thick] (ul)--(ll); \draw[thick] (ur)--(lr); 
		\end{scope}
	    %
		%
		\begin{scope}[xshift=5.5cm]
			\coordinate (u1) at (0,2); \coordinate (u2) at (0.75,2); \coordinate (u3) at (1.5,2);
			\coordinate (l1) at (0,0); \coordinate (l2) at (0.75,0); \coordinate (l3) at (1.5,0);
			\draw[thick] (u1)--(l3); \draw[thick] (u3)--(l1);
			\draw[thick] (u2) ..controls($(u2)!0.5!(l2)+(-0.75,0)$)..(l2);
		\end{scope}
		\node at ($(u2)!0.5!(l2)+(1.5,0)$) {$=$};
		\node at ($(u2)!0.5!(l2)+(1.5,-1.6)$) {$ S_{12} S_{23} S_{12} = S_{23} S_{12} S_{23}$};
		\begin{scope}[xshift=8.5cm]
			\coordinate (u1) at (0,2); \coordinate (u2) at (0.75,2); \coordinate (u3) at (1.5,2);
			\coordinate (l1) at (0,0); \coordinate (l2) at (0.75,0); \coordinate (l3) at (1.5,0);
			\draw[thick] (u1)--(l3); \draw[thick] (u3)--(l1);
			\draw[thick] (u2) ..controls($(u2)!0.5!(l2)+(+0.75,0)$)..(l2);
		\end{scope}
		%
		%
		\begin{scope}[xshift=11.9cm]
			\coordinate (u1) at (0,2); \coordinate (u2) at (0.75,2); \coordinate (u3) at (1.3,2);
			\coordinate (l1) at (0,0); \coordinate (l2) at (0.75,0); \coordinate (l3) at (1.3,0);
			\coordinate (v) at (0.375,1);
			\draw[thick] (u1) parabola[bend at end] (v) parabola (u2);
			\draw[thick] (v)--(0.375,0);
			\draw[thick] (u3) ..controls($(v)+(0.8,0.3)$).. ($(l1)+(-0.2,0)$);
		\end{scope}
		\node at ($(u2)!0.5!(l2)+(1.2,0)$) {$=$};
		\node at ($(u2)!0.5!(l2)+(1.2,-1.6)$) {$ S f_{12} = f_{23} S_{12} S_{23}$};
		\begin{scope}[xshift=14.9cm]
			\coordinate (u1) at (0,2); \coordinate (u2) at (0.75,2); \coordinate (u3) at (1.3,2);
			\coordinate (l1) at (0,0); \coordinate (l2) at (0.75,0); \coordinate (l3) at (1.3,0);
			\coordinate (v) at (0.375,1);
			\draw[thick] (u1) parabola[bend at end] (v) parabola (u2);
			\draw[thick] (v)--(0.375,0);
			\draw[thick] (u3) ..controls($(v)+(0,0.6)$)..  ($(l1)+(-0.2,0)$);
		\end{scope}
		%
		%
		\begin{scope}[yshift=-3.8cm]
			\coordinate (u1) at (0,2); \coordinate (u2) at (0.55,2); \coordinate (u3) at (1.3,2);
			\coordinate (l1) at (0,0); \coordinate (l2) at (0.55,0); \coordinate (l3) at (1.3,0);
			\coordinate (v) at (0.925,1);
			\draw[thick] (u2) parabola[bend at end] (v) parabola (u3);
			\draw[thick] (v) -- (0.925,0);
			\draw[thick] (u1) ..controls($(v)+(-0.8,0.3)$).. ($(l3)+(0.2,0)$);
		\end{scope}
		\node at ($(u2)!0.5!(l2)+(1.6,0)$) {$=$};
		\node at ($(u2)!0.5!(l2)+(1.6,-1.6)$) {$S f_{23} = f_{12} S_{23} S_{12}$};
		\begin{scope}[xshift=2.7cm,yshift=-3.8cm]
			\coordinate (u1) at (0,2); \coordinate (u2) at (0.55,2); \coordinate (u3) at (1.3,2);
			\coordinate (l1) at (0,0); \coordinate (l2) at (0.55,0); \coordinate (l3) at (1.3,0);
			\coordinate (v) at (0.925,1);
			\draw[thick] (u2) parabola[bend at end] (v) parabola (u3);
			\draw[thick] (v) -- (0.925,0);
			\draw[thick] (u1) ..controls($(v)+(0,0.6)$).. ($(l3)+(0.2,0)$);
		\end{scope}
		%
		%
		\begin{scope}[xshift=5.5cm, yshift=-3.8cm]
			\coordinate (u1) at (0,2); \coordinate (u2) at (1,2);
			\coordinate (l1) at (0,0); \coordinate (l2) at (1,0);
			\draw[thick] (u1) ..controls ($(u1)+(2.2,-1.5)$) and ($(u1)+(-1.2,-1.5)$).. (u2);
			\draw[thick] (0.5,0.89)--(0.5,0);
		\end{scope}
		\node at ($(u2)!0.5!(l2)+(0.7,0)$) {$=-$};
		\node at ($(u2)!0.5!(l2)+(0.7,-1.6)$) {$fS =-f$};
		\begin{scope}[xshift=7.7cm, yshift=-3.8cm]
			\coordinate (u1) at (0,2); \coordinate (u2) at (1,2);
			\coordinate (l1) at (0,0); \coordinate (l2) at (1,0);
			\coordinate (v) at (0.5,0.89);
			\draw[thick] (u1) parabola[bend at end] (v) parabola (u2);
			\draw[thick] (0.5,0.89)--(0.5,0);
		\end{scope}
		%
		%
		\begin{scope}[xshift=9.9cm,yshift=-3.8cm]
			\coordinate (u1) at (0,2); \coordinate (u2) at (0.7,2); \coordinate (u3) at (1.4,2);
			\coordinate (l1) at (0,0); \coordinate (l2) at (0.7,0); \coordinate (l3) at (1.4,0);
			\coordinate (v1) at (0.35,1.4);  \coordinate (v2) at (0.75,0.8);
			\draw[thick] (u1) parabola[bend at end] (v1) parabola (u2);
			\draw[thick] (v1) parabola[bend at end] (v2) parabola (u3);
			\draw[thick] (v2)--($(v2)+(0,-0.8)$);
		\end{scope}	
		\node at ($(u2)!0.5!(l2)+(1.2,0)$) {$=$};
		\node[xshift=30] at ($(u2)!0.5!(l2)+(1.2,-1.6)$) {$ f f_{12} = f f_{23} - f f_{23} S_{12}$};
		\begin{scope}[xshift=12.2cm,yshift=-3.8cm]
			\coordinate (u1) at (0,2); \coordinate (u2) at (0.7,2); \coordinate (u3) at (1.4,2);
			\coordinate (l1) at (0,0); \coordinate (l2) at (0.7,0); \coordinate (l3) at (1.4,0);
			\coordinate (v1) at (0.6,0.8);  \coordinate (v2) at (1.05,1.4);
			\draw[thick] (u2) parabola[bend at end] (v2) parabola (u3);
			\draw[thick] (u1) parabola[bend at end] (v1) parabola (v2);
			\draw[thick](v1)--($(v1)+(0,-0.8)$);
		\end{scope}
		\node at ($(u2)!0.5!(l2)+(1.2,0)$) {$-$};
		\begin{scope}[xshift=14.7cm,yshift=-3.8cm]
			\coordinate (u1) at (0,2); \coordinate (u2) at (0.7,2); \coordinate (u3) at (1.4,2);
			\coordinate (l1) at (0,0); \coordinate (l2) at (0.7,0); \coordinate (l3) at (1.4,0);
			\coordinate (v1) at (0.4,0.5);  \coordinate (v2) at (0.7,1.2);
			\draw[thick] (u1) parabola[bend at end] (v2) parabola (u3);
			\draw[thick] (v1) parabola  (v2);
			\draw[thick] (u2) .. controls($(v2)+(-0.9,0)$).. (v1);
			\draw[thick] (v1)--($(v1)+(0,-0.5)$);
		\end{scope}	
	\end{tikzpicture}
\end{center}
It is seen that the last two relations are AS and IHX relations, respectively. They are implemented in the definition of the $S$-Lie algebra. 

Denoting a basis of $L$ by $ X_i, i = 1, 2, \dots, \dim L $ and writing 
\begin{equation}
	f(X_i,X_j) = \sum_k \f{ij}{k} X_k, \qquad 
	S(X_i,X_j) = \sum_{k,\ell} S_{ij}^{k\ell} X_k X_{\ell}, \label{fScomp}
\end{equation} 
the constants $ \f{ij}{k},\; S_{ij}^{k\ell} \in \mathbb{K} $ are assigned to a trivalent vertex and a crossing:
\begin{center}
	\begin{tikzpicture}
		\coordinate(u1) at (0,2); \coordinate (u2) at (1.6,2); 
		\coordinate (l) at (0.8,0); 
		\coordinate (v) at (0.8,1);
		\draw[thick] (u1) parabola[bend at end] (v) parabola (u2);
		\draw[thick] (v)--(l);
		\node[left, xshift=-5] at($(u1)!0.3!(v)$) {$i$};
		\node[right,xshift=3] at($(u2)!0.3!(v)$) {$j$};
		\node[left] at($(v)!0.7!(l)$) {$k$};
		\node[above,yshift=3] at (v)  {\footnotesize $\f{ij}{k}$}; 
		\begin{scope}[xshift=5cm]
			\coordinate(u1) at (0,2); \coordinate (u2) at (1.6,2); 
			\coordinate(l1) at (0,0); \coordinate (l2) at (1.6,0); 
			\draw[thick] (u1)--(l2); \draw[thick] (u2)--(l1);
			\node[left,xshift=-2] at ($(u1)!0.2!(l2)$) {$i$}; 
			\node[right,xshift=2] at ($(u2)!0.2!(l1)$) {$j$};
			\node[right,xshift=2] at ($(u1)!0.8!(l2)$) {$\ell$}; 
			\node[left,xshift=-2] at ($(u2)!0.8!(l1)$) {$k$};
			\node[above,yshift=5] at ($(u1)!0.5!(l2)$) {\footnotesize $S_{ij}^{k\ell}$};
		\end{scope}
	\end{tikzpicture}
\end{center}
and each edge is labelled by the basis. 

A bilinear form $ B : L \times L \to \mathbb{K}$ is represented by
\begin{center}
	\begin{tikzpicture}
		\coordinate (A) at (0,2);  \coordinate (B) at (1,2);
		\coordinate (v) at (0.5,1.3);
		\draw[very thick] ($(A)+(-0.5,0)$) -- ($(B)+(0.5,0)$);
		\draw[very thick] ($(A)+(-0.5,-1.3)$) -- ($(B)+(0.5,-1.3)$);
		\draw[thick] (A) parabola[bend at end] (v) parabola (B);
	\end{tikzpicture}
\end{center}
and $ B_{ij} := B(X_i,X_j).$ 
If the bilinear form $B$ is non-degenerate, its inverse is denoted by $ C^{ij}$:
\begin{equation}
	\sum_k B_{ik} C^{kj} = \sum_k C^{jk} B_{ki} = \delta^i_j
\end{equation}
and graphically
\begin{center}
	\begin{tikzpicture}[scale=0.9]
		\begin{scope}[xshift=-20]
		\node at (-4,1) {$ C^{ij} = $};
		\draw[thick] (-2.8,0.7) node[left] {$i$} parabola[bend at end] (-2.3,1.5) parabola (-1.8,0.7) node[right] {$j$,};
		\end{scope}
		\node (u) at (0,2) {$i\ \; $}; \node (l) at (1.7,0) {$\ \; j$};
		\coordinate (v1) at (0.5,0.6); \coordinate (v2) at (0.8,1); \node (v3) at (1.2,1.6) {$k\qquad$};
		\draw[thick] (u) parabola[bend at end] (v1) parabola (v2) parabola[bend at end] (v3) parabola (l);
		\node at ($(l)+(0.5,1)$) {$=$};
		\begin{scope}[xshift=2.8cm]
			\node (u) at (0,0) {$j \ \;$}; \node (l) at (1.7,2) {$\ \;i$};
			\node (v1) at (0.6,1.6) {$\qquad k$}; \coordinate (v2) at (0.9,1.2); \coordinate (v3) at (1.2,0.8);
			\draw[thick] (u) parabola[bend at end] (v1) parabola (v2) parabola[bend at end] (v3) parabola (l);
        \end{scope}
        \node at ($(l)+(0.5,-1)$) {$=$};
        \begin{scope}[xshift=5.6cm]
        	\draw[thick] (0,0) node[right] {$j$} -- (0,2) node[right] {$i$};
        \end{scope}
	\end{tikzpicture}
\end{center}

\begin{DEF}
	The universal enveloping algebra $\mathcal{U}(L)$ of the $S$-Lie algebra $L$ is defined as the quotient algebra of the tensor algebra 
	\begin{equation}
		T^*(L) := \bigoplus_{n \in \mathbb{Z}_{\geq 0}} L^{\otimes n}
	\end{equation}
	by the ideal generated by the expression
	\begin{equation}
		X \otimes Y - S(X,Y) - f(X,Y), \ X, Y \in L  \label{ideal}
	\end{equation}
\end{DEF}

\noindent
The graphical expression of \eqref{ideal} is given by
\begin{equation}
	\begin{tikzpicture}[baseline=0pt]
		\coordinate (u1) at (0,1); \coordinate (u2) at (1,1);
		\coordinate (l1) at (0,0); \coordinate (l2) at (1,0);
		\draw[very thick] ($(l1)+(-0.5,0)$)--($(l2)+(0.5,0)$);
		\draw[thick] (u1)--(l1); \draw[thick] (u2)--(l2); 
		\node[below] at (l1) {\footnotesize $X_i$}; \node[below] at (l2) {\footnotesize $X_j$};
		\node at ($(l2)!0.5!(u2)+(1,0)$) {$-$}; 
		\begin{scope}[xshift=3cm]
			\coordinate (u1) at (0,1); \coordinate (u2) at (1,1);
			\coordinate (l1) at (0,0); \coordinate (l2) at (1,0);
			\draw[very thick] ($(l1)+(-0.5,0)$)--($(l2)+(0.5,0)$);
			\draw[thick] (u1)--(l2); \draw[thick] (u2)--(l1); 
			\node[below] at (l1) {\footnotesize $X_i$}; \node[below] at (l2) {\footnotesize $X_j$};
			\node at ($(l2)!0.5!(u2)+(1,0)$) {$-$}; 
		\end{scope}

		\begin{scope}[xshift=6cm]
			\coordinate (u1) at (0,1); \coordinate (u2) at (1,1);
			\coordinate (l1) at (0,0); \coordinate (l2) at (1,0);
			\coordinate (v) at ($(u1)!0.5!(u2)+(0,-0.5)$);
			\draw[very thick] ($(l1)+(-0.5,0)$)--($(l2)+(0.5,0)$);
			\draw[thick] (u1) parabola[bend at end](v) parabola (u2);
			\draw[thick] (v)--($(v)+(0,-0.5)$); 
			\node[below] at ($(v)+(0,-0.5)$) {\footnotesize $f(X_i,X_j)$}; 
			\node at ($(l2)!0.5!(u2)+(1,0)$) {$= 0$}; 
		\end{scope}
	\end{tikzpicture}
	\label{STUpic}
\end{equation}
and it is the STU relation.

Using the definitions above, the following is easy to verify.
\begin{prop} \label{PROP:S-Casi}
	If the bilinear form $ B$ on $L$ is non-degenerate and satisfies 
	\begin{enumerate}
		\item $ B f_{12} = B f_{23}$
		\item $ B_{12} S_{23} = B_{23} S_{12}$
	\end{enumerate}
	then the 2nd order Casimir of $L$ is given by
	\begin{equation}
		c = \sum_{i,j} C^{ij} X_i X_j, 
	\end{equation}
	namely, $ c X_i = X_i c $ for any $ X_i \in L.$ 
\end{prop}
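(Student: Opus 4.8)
The plan is to verify centrality on a basis, that is, to show $X_k c = c X_k$ in $\mathcal{U}(L)$ for every basis element $X_k$; bilinearity then gives $cX = Xc$ for all $X \in L$. Diagrammatically, $c$ is the ``cap'' $\sum_{ij} C^{ij} X_i X_j$ with both of its legs attached to the line representing $\mathcal{U}(L)$, and the two elements $X_k c$ and $c X_k$ differ only in whether the leg $X_k$ sits to the left or to the right of this cap. I would therefore start from $c X_k = \sum_{ij} C^{ij} X_i X_j X_k$ and use the STU relation \eqref{STUpic} to slide $X_k$ leftward, first past the $X_j$-leg and then past the $X_i$-leg, until it emerges as $X_k c$; the entire proof is then the bookkeeping of the crossing ($S$) and trivalent-vertex ($f$) terms produced by these two slides.

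For the crossing terms, I would first reformulate the hypotheses on $B$ as statements about its inverse $C$. Condition 2, $B_{12} S_{23} = B_{23} S_{12}$, says that a crossing adjacent to a $B$-pairing may be pushed through it; raising indices with $C = B^{-1}$ converts this into the dual move that a crossing $S$ slides freely from one leg of the $C$-cap to the other. Applying STU to the pair $X_j X_k$ replaces it by $S(X_j,X_k) + f(X_j,X_k)$, and the $S$-part, via this slid-crossing identity, carries $X_k$ across the $X_j$-leg. Repeating at the $X_i$-leg, and using $S^2 = \mathrm{id}$ (relation 1) together with the braid relation $S_{12}S_{23}S_{12} = S_{23}S_{12}S_{23}$ (relation 2) to keep the two successive crossings consistent, the purely $S$-valued contribution reassembles exactly into $X_k c$.

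For the vertex terms, the two slides also generate trivalent-vertex ($f$) contributions, one from crossing each leg of the cap, and I would show these cancel. Condition 1, $Bf_{12} = Bf_{23}$, is the invariance of $B$ under $f$; dualized through $C$ it states that the trivalent vertex likewise slides along the $C$-cap, switching which leg it attaches to. The relative sign needed for the two vertex contributions to annihilate is supplied by the braided antisymmetry relation 5, $fS = -f$: combining it with the dualized condition 1 shows that the $f$-term produced at the $X_i$-leg is the negative of the one at the $X_j$-leg. Hence these terms drop out, and what remains is precisely $c X_k = X_k c$.

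The genuinely delicate points, which I expect to be the main obstacle, are twofold. First, the index and sign bookkeeping when raising indices with $C$: since $B$ need not be symmetric and $S$ twists the ordering of factors, sliding an $S$ or an $f$ across the cap produces reordering factors that must be reconciled by invoking relations 1, 2 and 5 together rather than separately. Second, the STU relation is recursive, because $S(X_i,X_j)$ is itself a product $\sum_{ab} S_{ij}^{ab} X_a X_b$, so a naive expansion into components does not obviously terminate. For this reason I would conduct the argument at the level of diagrams, invoking conditions 1 and 2 as global sliding moves on the cap, rather than expanding $S$ into its structure constants and hoping the resulting series telescopes.
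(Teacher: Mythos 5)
Your plan is correct and coincides with what the paper intends (the paper omits the verification, calling it ``easy to verify,'' but displays immediately after the proposition exactly the dualized conditions on $C$ that you propose to use as sliding moves): applying STU at each leg of the cap, condition 2 together with $S^2=\mathrm{id}$ reassembles the crossed terms into $X_k c$, and condition 1 together with $fS=-f$ cancels the two vertex terms. One small correction: the braid relation $S_{12}S_{23}S_{12}=S_{23}S_{12}S_{23}$ is never needed here --- the two crossings produced by your two slides are composed directly on the same pair of strands and collapse by $S^2=\mathrm{id}$ alone.
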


The conditions in this proposition are presented graphically as 
\begin{center}
	\begin{tikzpicture}
		\coordinate (u1) at (0,1.2); \coordinate (u2) at (0.8,1.2); \coordinate (u3) at (1.6,1.2);
		\coordinate (l1) at (0,0);  \coordinate (l2) at (0.8,0); \coordinate (l3) at (1.6,0);
		\draw[thick]  (u1) parabola[bend at end] (l2) parabola (u3);
		\draw[thick] (u2)--(l3);
		\node at ($(u3)!0.5!(l3)+(0.5,0)$) {$=$};
		\node at ($(u3)!0.5!(l3)+(0.5,-1.2)$) {$B_{12} S_{23} = B_{23} S_{12}$};
		\begin{scope}[xshift=2.5cm]
			\coordinate (u1) at (0,1.2); \coordinate (u2) at (0.8,1.2); \coordinate (u3) at (1.6,1.2);
			\coordinate (l1) at (0,0);  \coordinate (l2) at (0.8,0); \coordinate (l3) at (1.6,0);
			\draw[thick]  (u1) parabola[bend at end] (l2) parabola (u3);
			\draw[thick] (u2)--(l1);
		\end{scope}
		\begin{scope}[xshift=6cm]
			\coordinate (u1) at (0,1.2); \coordinate (u2) at (1,1.2); \coordinate (u3) at (1.6,1.2);
			\coordinate (v1) at (0.5,0.4); \coordinate (v2) at (0.9,0);
			\draw[thick] (u1) parabola[bend at end] (v1) parabola (u2);
			\draw[thick] (v1) parabola[bend at end] (v2) parabola (u3);
			\node at ($(u2)+(1,-0.6)$) {$=$};
			\node at ($(u2)+(1,-1.8)$) {$Bf_{12}= B f_{23}$};
		\end{scope}
		\begin{scope}[xshift=8.5cm]
			\coordinate (u1) at (0,1.2); \coordinate (u2) at (0.6,1.2); \coordinate (u3) at (1.6,1.2);
			\coordinate (v1) at (0.7,0); \coordinate (v2) at (1.1,0.4);
			\draw[thick] (u2) parabola[bend at end] (v2) parabola (u3);
			\draw[thick] (u1) parabola[bend at end] (v1) parabola (v2);
		\end{scope}
	\end{tikzpicture}
\end{center}
In the construction of weight systems, we need the conditions rewritten in terms of $C^{ij}$:
\begin{align}
	\sum_k S_{jk}^{mn} C^{k\ell} = \sum_k  C^{mk} S_{kj}^{n\ell}, 
	\qquad
	\sum_k C^{mk} \f{kj}{n} = \sum_k \f{jk}{m} C^{kn}
\end{align}
and their graphical representation
\begin{center}
	\begin{tikzpicture}
		\coordinate (u1) at (0,1.2); \coordinate (u2) at (0.8,1.2); \coordinate (u3) at (1.6,1.2);
		\coordinate (l1) at (0,0);  \coordinate (l2) at (0.8,0); \coordinate (l3) at (1.6,0);
		\draw[thick]  (l1) parabola[bend at end] (u2) parabola (l3);
		\draw[thick] (u1)--(l2);
		\node at ($(u3)!0.5!(l3)+(0.5,0)$) {$=$};
		\begin{scope}[xshift=2.5cm]
			\coordinate (u1) at (0,1.2); \coordinate (u2) at (0.8,1.2); \coordinate (u3) at (1.6,1.2);
			\coordinate (l1) at (0,0);  \coordinate (l2) at (0.8,0); \coordinate (l3) at (1.6,0);
			\draw[thick]  (l1) parabola[bend at end] (u2) parabola (l3);
			\draw[thick] (l2)--(u3);
		\end{scope}
		\begin{scope}[xshift=6cm]
			\coordinate (l1) at (0,0); \coordinate (u1) at (0.45,1.2); \coordinate (u2) at (1.6,1.2);
			\coordinate (v1) at (0.7,0.85); \coordinate (v2) at (1.1,0.4);
			\draw[thick] (l1) parabola[bend at end] (u1) parabola (v1)  parabola[bend at end] (v2) parabola (u2);
			\draw[thick] (v2)--($(v2)+(0,-0.4)$);
			\node at ($(u2)+(0.5,-0.6)$) {$=$};
		\end{scope}
		\begin{scope}[xshift=9cm]
			\coordinate (l1) at (1.6,0); \coordinate (u1) at (0,1.2); \coordinate (u2) at (1.1,1.2);
			\coordinate (v1) at (0.4,0.4); \coordinate (v2) at (0.8,0.85);
			\draw[thick] (u1) parabola[bend at end] (v1) parabola (v2) parabola[bend at end] (u2) parabola (l1);
			\draw[thick] (v1)--($(v1)+(0,-0.4)$);
		\end{scope}
	\end{tikzpicture}
\end{center}

A Lie algebra gives a simple example of an $S$-Lie algebra where $ S(X_i,X_j) = X_j \otimes X_i$ and the mapping $f$ is the Lie bracket. It is also easy to see that a Lie superalgebra is an $S$-Lie algebra. We take $S$ as $X_i, X_j, $  $ S(X_i,X_j) = (-1)^{|X_i|\,|X_j|} X_j \otimes X_i $ for homogeneous elements with $\mathbb{Z}_2$-gradings $|X_i|$ and $ |X_j|. $ The mapping $f$ is the super Lie bracket.

\subsection{Universal weight systems from $S$-Lie algebras} \label{SEC:UWSL}

First, we collect some  of the basics of weight systems. 
Recall that a chord diagram is a Jacobi diagram without trivalent vertices:
\begin{center}
	\begin{tikzpicture}[scale=1.4]
		\draw[thick] (65:.5) coordinate(A) -- (-65:.5) coordinate(B);
		\draw[thick] (115:.5) coordinate(C) -- (245:.5) coordinate(D);
		\draw [very thick] (0,0) circle (.5);
		\foreach \label in {A,B,C,D}
		\fill (\label) circle (1.1pt);
		\begin{scope}[xshift=2cm]
			\draw[thick] (0,.5) coordinate(A) -- (0,-.5) coordinate(B);
			\draw[thick] (.5,0) coordinate(C) -- (-.5,0) coordinate(D);
			\draw [very thick] (0,0) circle (.5);
			\foreach \label in {A,B,C,D}
			\fill (\label) circle (1.1pt);
		\end{scope}
		\begin{scope}[xshift=4cm]
			\draw[thick] (.5,0) coordinate(A) -- (125:.5) coordinate(B);
			\draw[thick] (-.5,0) coordinate(C) -- (55:.5) coordinate(D);
			\draw[thick] (-55:.5) coordinate(E) to [out=150,in=30] (235:.5) coordinate(F);
			\draw [very thick] (0,0) circle (.5);
			\foreach \label in {A,B,C,D,E,F}
			\fill (\label) circle (1.1pt);
		\end{scope}
	\end{tikzpicture}
\end{center}
A Jacobi diagram with $2n$ vertices (univalent and trivalent) resolves to  chord diagrams with $n$ chords by the STU relation \eqref{STUpic}. We call the number $n$ the \textit{order} of a diagram. 
The STU relation for Jacobi diagrams ensures the four-term (4T) relation for chord diagrams:
\begin{equation}
	\begin{tikzpicture}[baseline=0,scale=0.7]
		\draw[dotted,thick] (0,0) circle [radius=1.2];
		\foreach \t/\la in {90/A1,210/B1,330/C1}{
			\draw[very thick] (\t-30:1.2) arc (\t-30:\t+30:1.2); 
			\coordinate(\la) at ({1.2*cos(\t+15)},{1.2*sin(\t+15)});
		}
		\foreach \t/\lb in {90/A2,210/B2,330/C2}{
			\coordinate(\lb) at ({1.2*cos(\t-15)},{1.2*sin(\t-15)});
		}
		\draw[thick] (A2)  to[bend right] (C2);
		\draw[thick] (B1) to[bend left] (C1);
		\foreach \p in {A2,C2,B1,C1}
			\fill (\p) circle (1.8pt); 
		\node at (1.8,0) {$-$};
		\begin{scope}[xshift=3.5cm]
			\draw[dotted,thick] (0,0) circle [radius=1.2];
			\foreach \t/\la in {90/A1,210/B1,330/C1}{
				\draw[very thick] (\t-30:1.2) arc (\t-30:\t+30:1.2); 
				\coordinate(\la) at ({1.2*cos(\t+15)},{1.2*sin(\t+15)});
			}
			\foreach \t/\lb in {90/A2,210/B2,330/C2}{
				\coordinate(\lb) at ({1.2*cos(\t-15)},{1.2*sin(\t-15)});
			}
			\draw[thick] (A2) to[bend right] (C1);
			\draw[thick] (B1) to[bend left] (C2);
			\foreach \p in {A2,C1,B1,C2}
				\fill (\p) circle (1.8pt);
			\node at (1.8,0) {$+$};
		\end{scope}
		\begin{scope}[xshift=7cm]
			\draw[dotted,thick] (0,0) circle [radius=1.2];
			\foreach \t/\la in {90/A1,210/B1,330/C1}{
				\draw[very thick] (\t-30:1.2) arc (\t-30:\t+30:1.2); 
				\coordinate(\la) at ({1.2*cos(\t+15)},{1.2*sin(\t+15)});
			}
			\foreach \t/\lb in {90/A2,210/B2,330/C2}{
				\coordinate(\lb) at ({1.2*cos(\t-15)},{1.2*sin(\t-15)});
			}
			\draw[thick] (A2) to[bend right] (C1);
			\draw[thick] (A1) to[bend left] (B2);
			\foreach \p in {A2,C1,A1,B2}
				\fill (\p) circle (1.8pt);
			\node at (1.8,0) {$-$};
		\end{scope}
		\begin{scope}[xshift=10.5cm]
			\draw[dotted,thick] (0,0) circle [radius=1.2];
			\foreach \t/\la in {90/A1,210/B1,330/C1}{
				\draw[very thick] (\t-30:1.2) arc (\t-30:\t+30:1.2); 
				\coordinate(\la) at ({1.2*cos(\t+15)},{1.2*sin(\t+15)});
			}
			\foreach \t/\lb in {90/A2,210/B2,330/C2}{
				\coordinate(\lb) at ({1.2*cos(\t-15)},{1.2*sin(\t-15)});
			}
			\draw[thick] (A1) to[bend right] (C1);
			\draw[thick] (A2) to[bend left] (B2);
			\foreach \p in {A1,C1,A2,B2}
				\fill (\p) circle (1.8pt);
			\node at (1.8,0) {$= 0$};
		\end{scope}
	\end{tikzpicture}
	\label{4Tpic}
\end{equation}
 
Let $ \mathcal{A}_n $ (resp. $\bar{\mathcal{A}_n}$) be the space of formal linear combinations of order $n$ chord diagrams modulo 4T relations \eqref{4Tpic} (resp. 4T and one-term (1T) relations \eqref{1Tpic}). 
\begin{equation}
	\begin{tikzpicture}[baseline=0,scale=0.7]
		\draw[dotted,thick] (0,0) circle [radius=1.2];
		\draw[very thick] (130:1.2) arc (130:230:1.2);
		\coordinate (A) at ({1.2*cos(150)},{1.2*sin(150)}); 
		\coordinate (B) at ({1.2*cos(210)},{1.2*sin(210)});
		\draw[thick] (A)  to[bend left] (B);
		\fill (A) circle (1.8pt);  \fill (B) circle (1.8pt); 
		\node at (1.8,0) {$=0$}; 
	\end{tikzpicture}
	\label{1Tpic}
\end{equation}	
The corresponding graded spaces
\begin{equation}
	\mathcal{A} = \mathcal{A}_0 \oplus \mathcal{A}_1 \oplus \mathcal{A}_2 \oplus \dots, 
	\qquad
	\bar{\mathcal{A}} = \bar{\mathcal{A}}_0 \oplus \bar{\mathcal{A}}_1 \oplus \bar{\mathcal{A}}_2 \oplus \dots
\end{equation}
have a Hopf algebra structure where the product of two chord diagrams is their connected sum \cite{BN1}. 
A function $w_{\mathbb{K}}$ (resp. $\bar{w}_{\mathbb{K}}$) on $\mathcal{A}_n $ (resp. $\bar{\mathcal{A}}_n$) which takes its value in $\mathbb{K}$ is called a \textit{$\mathbb{K}$-valued weight system} of order $n$. 
This means that the weight system is the function that respects the 4T  (and 1T) relations.  	
We denote the set of all weight systems of order $n$ by $ \mathcal{W}_n $ (and $ \overline{\mathcal{W}}_n$ for $\bar{w}_{\mathbb{K}}$). 	
The fundamental theorem of weight systems is due to Kontsevich and Bar-Natan. 
\begin{thm}[\cite{BN1},\cite{Kon}]
	Let $\mathcal{V}_n$ and $\overline{\mathcal{V}}_n$ be the space of the Vassiliev invariants of order $n$ of framed and ordinary (unframed) knots, respectively. 
	\begin{enumerate}
		\item $ \mathcal{V}_n/\mathcal{V}_{n-1} \simeq \mathcal{W}_n \simeq \mathcal{A}_n^*$
		\item $ \overline{\mathcal{V}}_n/\overline{\mathcal{V}}_{n-1} \simeq \overline{\mathcal{W}}_n \simeq \bar{\mathcal{A}}_n^*$
	\end{enumerate}
	where $ \mathcal{A}_n^* $ (resp. $\bar{\mathcal{A}}_n^*$) is the space dual to $ \mathcal{A}_n$ (resp. $ \bar{\mathcal{A}}_n$). 
\end{thm}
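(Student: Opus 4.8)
The plan is to split the theorem into its two halves and prove each isomorphism separately, treating the framed and unframed cases in parallel. In each item the second isomorphism, $\mathcal{W}_n \simeq \mathcal{A}_n^*$ (resp. $\overline{\mathcal{W}}_n \simeq \bar{\mathcal{A}}_n^*$), is essentially definitional: a weight system of order $n$ is by construction a $\mathbb{K}$-linear functional on order-$n$ chord diagrams that vanishes on the 4T relations \eqref{4Tpic} (and, in the unframed case, additionally on the 1T relations \eqref{1Tpic}). Since $\mathcal{A}_n$ (resp. $\bar{\mathcal{A}}_n$) is precisely the quotient of the free space on chord diagrams by these relations, such functionals are exactly the elements of the dual space $\mathcal{A}_n^*$ (resp. $\bar{\mathcal{A}}_n^*$), and the identification requires no further argument.

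The first isomorphism $\mathcal{V}_n/\mathcal{V}_{n-1} \simeq \mathcal{W}_n$ is the substantive content, and I would prove it by constructing a map in each direction. For the injection $\mathcal{V}_n/\mathcal{V}_{n-1} \hookrightarrow \mathcal{W}_n$ I would use the \emph{symbol map}. One extends a Vassiliev invariant $v$ of order $\leq n$ to singular knots by the Vassiliev skein relation $v(K_{\times}) = v(K_+) - v(K_-)$; on a knot with exactly $n$ transverse double points the value of $v$ depends only on the underlying chord diagram, because altering any other crossing produces a difference supported on knots with $n+1$ double points, where $v$ vanishes. This yields a function $\mathrm{symb}(v)$ on order-$n$ chord diagrams, and a short local computation, resolving one double point against a transversal strand in the four possible ways, shows that $\mathrm{symb}(v)$ respects the 4T relation \eqref{4Tpic}, hence lies in $\mathcal{W}_n$. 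By construction $\mathrm{symb}(v) = 0$ if and only if $v$ vanishes on all $n$-singular knots, i.e. if and only if $v \in \mathcal{V}_{n-1}$, so the symbol descends to an injection $\mathcal{V}_n/\mathcal{V}_{n-1} \hookrightarrow \mathcal{W}_n$.

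Surjectivity of the symbol, namely the realization of every weight system by an actual Vassiliev invariant, is the hard part, and here I would invoke the \emph{Kontsevich integral} \cite{Kon}. One constructs the universal invariant $Z(K) = \sum_m Z_m(K)$ with $Z_m(K) \in \mathcal{A}_m$ as an iterated (KZ-type) integral over a Morse presentation of $K$, and shows that for $w \in \mathcal{A}_n^* = \mathcal{W}_n$ the composite $w \circ Z_n$ is a genuine $\mathbb{K}$-valued invariant whose symbol is exactly $w$. Establishing that $Z$ is a well-defined knot invariant, convergent after regularization of the anomalous boundary contributions and independent of the chosen Morse embedding, is the genuine obstacle; it rests on the Drinfeld associator together with the pentagon and hexagon relations, equivalently on Bar-Natan's theory of parenthesized tangles \cite{BN1}. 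Granting $Z$, the fact that $w \circ Z_n$ has symbol $w$ follows because the degree-$n$ part of $Z$ evaluated on an $n$-singular knot reproduces its chord diagram to leading order. For the unframed case I would finally account for the framing anomaly: $Z$ transforms under a framing change by a factor built from the single-chord diagram, so dividing $Z$ by this factor produces an invariant valued in $\bar{\mathcal{A}}$ modulo the 1T relation \eqref{1Tpic}, which upgrades the framed statement to item (2) with $\overline{\mathcal{W}}_n \simeq \bar{\mathcal{A}}_n^*$.
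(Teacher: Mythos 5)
The paper does not prove this theorem at all: it is stated as a cited review result from \cite{BN1} and \cite{Kon}, so there is no in-paper argument to compare against. Your outline correctly reproduces the standard proof strategy of those references --- the symbol map with the Vassiliev skein relation giving the injection $\mathcal{V}_n/\mathcal{V}_{n-1} \hookrightarrow \mathcal{W}_n$, the 4T relation arising from resolving a double point against a transversal strand, the Kontsevich integral supplying surjectivity, and the framing correction handling the 1T relation --- and the identification $\mathcal{W}_n \simeq \mathcal{A}_n^*$ is indeed definitional given how the paper defines weight systems as functions on $\mathcal{A}_n$. As a sketch it is accurate; a complete proof would of course require the full construction and invariance of $Z$, which you correctly flag as the genuine technical content.
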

There is an algebraic way to construct the weight systems, first developed by Kontsevich for simple Lie algebras \cite{Kon} and then generalized by Vaintrob for $S$-Lie algebras \cite{Vaintrob}. We define the \textit{universal weight system} of order $n$ by $ w : \mathcal{A}_n \to Z(\mathcal{U}(L))$ where $ Z(\mathcal{U}(L)) $ is the center of the universal enveloping algebra of the $S$-Lie algebra. The map $w$, of course, respects the 4T relations. Similarly, one may define $ \bar{w} : \bar{\mathcal{A}}_n \to Z(\mathcal{U}(L)). $ 
If we have a finite dimensional representation of the $S$-Lie algebra, then taking a trace of the image of $w$ (or $\bar{w}$) we get a $\mathbb{K}$-valued weight system. 
Before illustrating the construction of the universal weight systems for $S$-Lie algebra, we present an important theorem:
\begin{thm}[\cite{BN1}]
	Let $\mathcal{B}_n$ be the vector space spanned by Jacobi diagrams of order $n$ modulo the STU relation \eqref{STUpic}. Then, $ \mathcal{B}_n \simeq \mathcal{A}_n$
\end{thm}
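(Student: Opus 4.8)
My plan is to realize the isomorphism through the tautological map sending chord diagrams into Jacobi diagrams, and then build an explicit inverse by resolving trivalent vertices. Since a chord diagram is nothing but a Jacobi diagram with no trivalent vertices, sending each order-$n$ chord diagram to its class gives a linear map $\iota:\mathcal{A}_n\to\mathcal{B}_n$. The first step is to check that $\iota$ is well defined on the quotient, i.e.\ that the 4T combination \eqref{4Tpic} becomes zero in $\mathcal{B}_n$. This is exactly the observation already recorded in the text: applying the STU relation \eqref{STUpic} to each of the two relevant chords and cancelling produces the 4T relation, so every 4T relator lies in the STU-ideal and $\iota$ descends to $\mathcal{A}_n$.

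Next I would establish surjectivity of $\iota$ by a resolution argument, inducting on the number $t$ of trivalent vertices of a Jacobi diagram $D$. If $t=0$ then $D$ is already a chord diagram. If $t>0$, I choose an internal vertex having an edge that ends on the skeleton $S^1$ (such a vertex exists once one follows an edge inward from the circle, using that each component of $D$ meets $S^1$) and apply STU there; this rewrites $D$ as the difference of two Jacobi diagrams, each with $t-1$ trivalent vertices, to which the inductive hypothesis applies. Hence every class of $\mathcal{B}_n$ is a combination of images of chord diagrams, so $\iota$ is onto.

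For injectivity I would produce a left inverse $\chi:\mathcal{B}_n\to\mathcal{A}_n$ defined by the very same resolution: given a Jacobi diagram, apply STU repeatedly until no trivalent vertex survives and read off the resulting combination of chord diagrams in $\mathcal{A}_n$. Granting that $\chi$ is well defined, it respects STU automatically (for an STU triple, resolving at the distinguished vertex first exhibits $\chi(D_S)=\chi(D_T)-\chi(D_U)$), and on a chord diagram it acts as the identity, so $\chi\circ\iota=\mathrm{id}_{\mathcal{A}_n}$; this makes $\iota$ injective, and with surjectivity it is an isomorphism with inverse $\chi$. The genuine content, and the main obstacle, is therefore the well-definedness of $\chi$: the element of $\mathcal{A}_n$ obtained must not depend on the order in which STU moves are performed. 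This is precisely where the passage to the quotient by 4T (as opposed to merely STU among chords) is indispensable. I would verify independence of the choices by examining the local ambiguities — resolving two disjoint vertices in either order (which commute) and resolving a shared configuration in the two possible orders — and showing that the two resulting combinations of chord diagrams agree modulo \eqref{4Tpic}. Carrying out this comparison is the technical heart of the argument; once it is in place, $\chi$ descends to $\mathcal{B}_n$ and the isomorphism $\mathcal{B}_n\simeq\mathcal{A}_n$ follows.
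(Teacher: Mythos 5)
The paper does not actually prove this statement: it is quoted verbatim from Bar-Natan \cite{BN1} and used as a black box, so there is no in-paper argument to compare yours against. Measured against the standard proof (Bar-Natan, or Chmutov--Duzhin--Mostovoy), your skeleton is the right one: the tautological map $\iota:\mathcal{A}_n\to\mathcal{B}_n$ descends because every 4T relator is an STU consequence (this much the paper itself records around \eqref{4Tpic}); surjectivity follows by induction on the number of trivalent vertices, resolving at a trivalent vertex adjacent to the skeleton (and you correctly flag that one needs every component of the dashed graph to meet $S^1$, which is implicit in the paper's definition); and injectivity reduces to producing a left inverse $\chi$ by total resolution.

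The genuine gap is the one you yourself point to and then do not close: the well-definedness of $\chi$, i.e.\ that the element of $\mathcal{A}_n$ obtained by iterated STU resolution is independent of the order in which the trivalent vertices are resolved. This is not a routine check to be deferred --- it is the entire mathematical content of the theorem, since everything else in your argument is formal. Concretely, you must set up the induction on the number $t$ of trivalent vertices so that the inductive hypothesis gives a single well-defined value for every diagram with fewer than $t$ such vertices, and then compare the two resolutions obtained by first resolving at a vertex $u$ versus first at a vertex $v$. When $u$ and $v$ are attached to the skeleton far apart the two orders commute on the nose, but when they share an internal edge, or when their legs are adjacent on $S^1$, the two resulting linear combinations of diagrams with $t-2$ trivalent vertices differ by terms that must be exhibited explicitly as 4T (equivalently, lower-order STU) relators; the $t=1$ base case is exactly the derivation of 4T from STU, and the $t=2$ interacting case already requires a nontrivial diagram computation. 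As written, ``I would verify independence of the choices by examining the local ambiguities'' names the obstacle without overcoming it, so the proposal is a correct plan rather than a proof.
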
	
It follows that the space $ \mathcal{B} := \bigoplus_{n} \mathcal{B}_n$ also has a Hopf algebra structure with connected sum as product and a universal weight system can be defined similarly: $\mathcal{B} \to Z(\mathcal{U}(L)) $ which is a function taking its value in the center and respecting the STU relation.  
	
Now, we illustrate a construction of the universal weight system from an $S$-Lie algebra. 
We will discuss only on framed knots in this subsection and the treatment of unframed knots will be presented in \S \ref{SEC:deframing}. Consider the following diagram $D$. 
Cut $D$ at some point on $S^1$ and open it, then we get the diagram on a line segment (middle). 
Redraw the chords as a combination of the graphical presentations of the operations $f, S, B $ and $C$ of a $S$-Lie algebra $L$ (right). 
\begin{center}
	\begin{tikzpicture}[scale=1.1]
		\draw[thick] (45:.8) -- (225:.8); 
		 \fill (45:.8) circle (1.5pt);
		 \fill (225:.8) circle (1.5pt);
		\draw[thick] (135:.8) -- (-45:.8);
		  \fill (135:.8) circle (1.5pt);
		  \fill (-45:.8) circle (1.5pt);
		\draw [very thick] (0,0) circle (.8);
		\draw[-Straight Barb,thick] (.01,.8) -- (-.01,.8);
		\node at ($(90:.8)+(0,-2)$) {$D$};
		\node at (1.8,0) {$\longmapsto$};
		\begin{scope}[xshift=4.3cm,yshift=-0.5cm,scale=1.3]
			\draw[very thick] (-1.2,0) -- (1.2,0);
			\draw[thick] (-.8,0) parabola bend (-.3,.8) (.2,0);
			\draw[thick] (-.2,0) parabola bend (.3,.8) (.8,0);
			\draw[thick,-Straight Barb] (1.1,0) -- (1.25,0);
			\node at(1.8,0) {$=$};
		\end{scope}
		\begin{scope}[xshift=9.2cm]
			\draw[very thick] (-2,-.5) -- (2,-.5);
			\draw[-Straight Barb,thick] (1.9,-.5) -- (2.05,-.5);
			\draw[thick] (-.5,-.5) node[below] {$X_j$} -- (.5,.5) node[right] {$q$};
			\draw[thick] (-.5,.5) node[left] {$p$}-- (.5,-.5) node[below] {$X_k$};
			\draw[thick] (1.5,-.5) node[below] {$X_{\ell}$} -- (1.5,.5) node[right] {$\ell$};
			\draw[thick] (-1.5,-.5) node[below] {$X_i$} -- (-1.5,.5) node[left] {$i$};
			\draw[thick] (-1.5,.5) parabola bend (-1,1.2) (-.5,.5);
			\draw[thick] (1.5,.5) parabola bend (1,1.2) (.5,.5);
		\end{scope}
	\end{tikzpicture}
\end{center}
Assign the basis of $L$ to the univalent vertices. Taking a product of the basis along the direction of the line segment and summing over all the  indices, we obtain the associated element of $Z(\mathcal{U}(L))$:
\begin{equation}
	w(D) = \sum_{i,j,k,\ell,p,q} C^{ip}C^{q\ell}S_{pq}^{jk}X_iX_jX_kX_{\ell}.
\end{equation}

\noindent
\textbf{Remarks:}
\begin{enumerate}
	\item In this construction, $w(D)$ depends on the position of the cut,  
	however, it always takes the value in $Z(\mathcal{U}(L)).$ 
	Furthermore, for many diagrams, $ w(D)$ is independent of the position of the cut, since it can be computed from  lower order diagrams by using the relations given in \S \ref{SEC:A1WSframe}. It is obvious from symmetry that $w(D)$  is independent of the position of the cut for low order diagrams.
	\item If $ D$ has only one chord, then $w(D)$ is the Casimir:
	\begin{equation}
		w\left( 
		\begin{tikzpicture}[baseline=0]
			\draw[very thick] (0,0) circle (0.6); 
			\draw[thick] ({0.6*cos(90)},{0.6*sin(90)}) coordinate (A) -- ({0.6*cos(270)},{0.6*sin(270)}) coordinate(B);
			\fill (A) circle(1.5pt); \fill (B) circle(1.5pt); 
		\end{tikzpicture}
		\right) 
		= 
		\sum_{i,j} C^{ij} X_i X_j = c.
	\end{equation}  
	\item If $ D $ has an isolated chord, it is factored out:
	\begin{equation}
		w\left( 
		\begin{tikzpicture}[baseline=0]
			\draw[dotted,thick] (0,0) circle (0.7); 
			\draw[very thick] (130:0.7) arc (130:230:0.7);
			\draw[thick] ({0.7*cos(140)},{0.7*sin(140)}) coordinate (A) to[bend left] ({0.7*cos(220)},{0.7*sin(220)}) coordinate(B);
			\fill (A) circle(1.5pt); \fill (B) circle(1.5pt); 
		\end{tikzpicture}
		\right) 
		= 
		c\cdot w \left(
		\begin{tikzpicture}[baseline=0]
			\draw[dotted,thick] (0,0) circle (0.7); 
			\draw[very thick] (130:0.7) arc (130:230:0.7);
		\end{tikzpicture}
		\right). \label{isofac}
	\end{equation}
	
\end{enumerate}

%
\section{Color Lie algebras and $\Z2$-graded Lie algebra $A1_{\epsilon}$} \label{SEC:colorLie}
\setcounter{equation}{0}

\subsection{Color Lie algebra as $S$-Lie algebra}

We give a definition of color Lie algebras according to \cite{sch} and point out that any color Lie algebra is an $S$-Lie algebra. 

Let $\g$ be a vector space over $\mathbb{K}.$ 
We assume that $\g$ is a direct sum of subspaces each of them is labeled by an element of an abelian group $\Gamma$:
\begin{equation}
	\g = \bigoplus_{\gamma \in \Gamma} \g_{\gamma}. \label{gradedVS}
\end{equation}
\begin{DEF}
	A commuting factor $\epsilon$ on $\Gamma$ is a mapping
	\begin{equation}
		\epsilon : \Gamma \times \Gamma \to \mathbb{K}_{\ast}
	\end{equation}
	such that
	\begin{align}
		\epsilon(\alpha,\beta) \epsilon(\beta,\alpha) &= 1,
		\\
		\epsilon(\alpha, \beta+\gamma) &= \epsilon(\alpha,\beta) \epsilon(\alpha,\gamma),
		\\
		\epsilon(\alpha+\beta, \gamma) &= \epsilon(\alpha,\gamma) \epsilon(\beta,\gamma)
	\end{align}
	for $\alpha, \beta, \gamma \in \Gamma.$ $\mathbb{K}_{\ast}$ denotes the multiplicative group of nonzero elements of $\mathbb{K}.$ 
\end{DEF}

\begin{DEF} \label{DEF:CLA}
	The $\Gamma$-graded vector space in \eqref{gradedVS} is referred to as a color Lie algebra if it admits an bilinear mapping (the graded Lie bracket) $ \llbracket \ , \ \rrbracket : \g \times \g \to \g $ satisfying the following identities 
	\begin{align}
	&	\llbracket X , Y \rrbracket \in \g_{\alpha+\beta},
		\\
	&	\llbracket X , Y \rrbracket = -\epsilon(\alpha,\beta) \llbracket Y, X \rrbracket,
		\\
	&	\epsilon(\gamma,\alpha) \llbracket X, \llbracket Y, Z \rrbracket \rrbracket + \mathrm{cyclic} = 0
	\end{align}
	for $ X \in \g_{\alpha}, Y \in \g_{\beta}, Z \in \g_{\gamma} $ and $\alpha, \beta, \gamma \in \Gamma.$
\end{DEF}

More generally, an algebras $R$ is called $\Gamma$-graded if its underlying vector space is $\Gamma$-graded:
\begin{equation}
	R = \bigoplus_{\gamma \in \Gamma} R_{\gamma}.
\end{equation}
and if, furthermore
\begin{equation}
	R_{\alpha} R_{\beta} \subset R_{\alpha+\beta}, \quad \forall \alpha, \beta \in \Gamma
\end{equation}
The graded Lie bracket is realized in $R$ by 
\begin{equation}
	\llbracket X, Y \rrbracket = XY - \epsilon(\alpha,\beta) YX, \quad X \in R_{\alpha}, Y \in R_{\beta} 
\end{equation}

The simplest example of the color Lie algebras is the trivial $\Gamma$ having only one element. In this case, the color Lie algebra $\g$ is identical to an ordinary Lie algebra and the graded Lie bracket is realized by the commutator. 
The first non-trivial case  will be $ \Gamma = \mathbb{Z}_2 = \{ \, 0,\, 1\,  \}$. 
If we take the commuting factor $ \epsilon(\alpha,\beta) = (-1)^{\alpha\beta}$, then Definition \ref{DEF:CLA} is reduced to that of the Lie superalgebras and the graded Lie bracket is realized by either commutator or anticommutator depending on the product $\alpha\beta$. 
The second non-trivial example is $ \Gamma = \Z2 := \mathbb{Z}_2 \times \mathbb{Z}_2$ for which there are two possible choices of the commuting factor. For $ \alpha = (\alpha_1,\alpha_2), \beta = (\beta_1,\beta_2) \in \Z2$, let
\begin{equation}
	\alpha \cdot \beta = \alpha_1 \beta_2 - \alpha_2 \beta_1, \quad 
	\text{or} \quad \alpha_1 \beta_1 + \alpha_2 \beta_2. \label{comfac}
\end{equation}
Then, $ \epsilon(\alpha,\beta) = (-1)^{\alpha\cdot\beta}$ is a commuting factor and the color Lie algebra corresponding to the first (resp. second) choice is referred to as $\Z2$-graded Lie algebra (resp. $\Z2$-graded Lie superalgebra) in the literature. 
The  graded Lie bracket of the $\Z2$-graded Lie (super)algebras is also realized by either commutator or anticommutator depending on the parity of  $\alpha\cdot\beta$.

It is not difficult to see that the color Lie algebra is an example of the $S$-Lie algebra. 
The following is immediately verified by direct computation. 
\begin{prop} \label{PROP:ColorS}
	Let $\g$ be a color Lie algebra. Set 
	\begin{equation}
		S(X,Y) = \epsilon(\alpha,\beta) Y \otimes X, \quad 
		f(X,Y) = \llbracket X, Y \rrbracket, \quad X, Y \in \g
	\end{equation}
	then $\g$ satisfies Definition \ref{DEF:SLie}.  
\end{prop}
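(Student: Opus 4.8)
The plan is to verify the six identities of Definition \ref{DEF:SLie} one at a time by direct computation. Since $f$ and $S$ are bilinear, it suffices to evaluate both sides of each identity on homogeneous basis elements $X \in \g_\alpha$, $Y \in \g_\beta$, $Z \in \g_\gamma$ and extend by linearity. Under the stated definitions, $S$ merely transposes two adjacent tensor factors while multiplying by the scalar $\epsilon$ evaluated on the pair of grades that cross, and $f$ replaces two adjacent factors by their graded bracket, which carries grade $\alpha+\beta$. Thus each identity reduces either to an equation among scalars times permuted tensors (the relations involving only $S$ or mixing $S$ and $f$) or to an equation among nested brackets (the last relation).

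The first group of identities is bookkeeping of the commuting factor. For identity 1, $S^2(X\otimes Y) = \epsilon(\alpha,\beta)\epsilon(\beta,\alpha)\,X\otimes Y = X\otimes Y$ by $\epsilon(\alpha,\beta)\epsilon(\beta,\alpha)=1$. For identity 2, applying each side to $X\otimes Y\otimes Z$ reverses the order of the three factors, and both sides accumulate the same product $\epsilon(\alpha,\beta)\epsilon(\alpha,\gamma)\epsilon(\beta,\gamma)$, so they agree. Identity 5 uses graded antisymmetry: $fS(X\otimes Y)=\epsilon(\alpha,\beta)\llbracket Y,X\rrbracket = -\epsilon(\alpha,\beta)\epsilon(\beta,\alpha)\llbracket X,Y\rrbracket = -f(X\otimes Y)$. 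For the mixed identities 3 and 4 the key ingredient is the bicharacter property of $\epsilon$. On $X\otimes Y\otimes Z$ the left side $Sf_{12}$ yields $\epsilon(\alpha+\beta,\gamma)\,Z\otimes\llbracket X,Y\rrbracket$, because $\llbracket X,Y\rrbracket$ has grade $\alpha+\beta$, while $f_{23}S_{12}S_{23}$ yields $\epsilon(\alpha,\gamma)\epsilon(\beta,\gamma)\,Z\otimes\llbracket X,Y\rrbracket$; these coincide by $\epsilon(\alpha+\beta,\gamma)=\epsilon(\alpha,\gamma)\epsilon(\beta,\gamma)$. Identity 4 is verified symmetrically using $\epsilon(\alpha,\beta+\gamma)=\epsilon(\alpha,\beta)\epsilon(\alpha,\gamma)$.

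The main work, and the only conceptually interesting step, is identity 6, which encodes the graded Jacobi identity. Evaluating on $X\otimes Y\otimes Z$, the three terms read $\llbracket\llbracket X,Y\rrbracket,Z\rrbracket$, $\llbracket X,\llbracket Y,Z\rrbracket\rrbracket$ and $\epsilon(\alpha,\beta)\llbracket Y,\llbracket X,Z\rrbracket\rrbracket$, so the identity asserts $\llbracket\llbracket X,Y\rrbracket,Z\rrbracket = \llbracket X,\llbracket Y,Z\rrbracket\rrbracket - \epsilon(\alpha,\beta)\llbracket Y,\llbracket X,Z\rrbracket\rrbracket$. I would start from the graded Jacobi identity of Definition \ref{DEF:CLA}, solve it for the term $\llbracket Z,\llbracket X,Y\rrbracket\rrbracket$, and then rewrite $\llbracket\llbracket X,Y\rrbracket,Z\rrbracket = -\epsilon(\alpha+\beta,\gamma)\llbracket Z,\llbracket X,Y\rrbracket\rrbracket$ using graded antisymmetry together with $\epsilon(\alpha+\beta,\gamma)=\epsilon(\alpha,\gamma)\epsilon(\beta,\gamma)$. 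Substituting and simplifying the products of commuting factors via $\epsilon(\alpha,\gamma)\epsilon(\gamma,\alpha)=1$ and $\llbracket Z,X\rrbracket = -\epsilon(\gamma,\alpha)\llbracket X,Z\rrbracket$ collapses the expression exactly onto the desired right-hand side. The only place where care is needed is tracking these $\epsilon$-factors through the reindexing, which is precisely where the subtlety of the color Jacobi identity, as opposed to the ordinary one, resides.
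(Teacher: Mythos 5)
Your verification is correct and is exactly the ``direct computation'' the paper invokes without writing out: the paper simply asserts that Proposition \ref{PROP:ColorS} ``is immediately verified by direct computation,'' and your case-by-case check of the six identities, with the bicharacter properties handling identities 1--5 and the color Jacobi identity (rewritten in derivation form via graded antisymmetry and $\epsilon(\alpha+\beta,\gamma)=\epsilon(\alpha,\gamma)\epsilon(\beta,\gamma)$) handling identity 6, is that computation carried out correctly.
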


\subsection{Minimal $\Z2$-graded Lie algebra $A1_{\epsilon}$}

A $\Z2$-graded Lie algebra consists of four vector subspaces each of which is labeled by an element of $\Z2$:
\begin{equation}
	\g = \g_{(0,0)} \oplus \g_{(1,0)} \oplus \g_{(0,1)}  \oplus \g_{(1,1)}. 
\end{equation}
If $ \dim \g_{\bm{a}} = 1\  (\text{i.e.}, \dim \g = 4), \forall \bm{a} \in \Z2$, then it will give an \textit{minimal} non-trivial example of the $\Z2$-graded Lie algebra.  
Such a minimal $\Z2$-graded Lie (super)algebra has been classified in \cite{FZminimal}. 
We take one of them which is called $ A1_{\epsilon} $  in \cite{FZminimal} 
and denote its basis as
\begin{equation}
	H \in \g_{(0,0)}, \qquad Q_1 \in \g_{(1,0)}, \qquad Q_2 \in \g_{(0,1)}, \qquad Q_3 \in \g_{(1,1)}
\end{equation} 
which subject to the relations (we use (anti)commutator notation for the graded Lie bracket)
\begin{equation}
	[H, Q_k] = 0, \qquad \{ Q_1, Q_2\} = Q_3, \qquad \{ Q_2, Q_3\} = \epsilon Q_1, \qquad \{ Q_3, Q_1\} = Q_2
\end{equation}
with $\epsilon = \pm 1.$ 

The algebra $A1_{\epsilon}$ has a non-degenerate invariant bilinear form, so one may use it to construct a weight system. 
Invariant bilinear forms on $\Z2$-graded Lie algebras are summarized in Appendix. 
The invariant bilinear form of $A1_{\epsilon}$ is obtained from a four-dimensional reducible representation. Let $ V = V_{(0,0)} \oplus V_{(1,0)} \oplus V_{(0,1)} \oplus V_{(1,1)} $ be a four-dimensional $\Z2$-graded $A1_{\epsilon}$-module  and $ v_{00}, v_{10}, v_{01} $ and $v_{11}$ be a basis of the corresponding subspace $V_{\bm{a}}$, 
i.e., $\dim V_{\bm{a}} = 1 $ for any $ \bm{a} \in \Z2.$ 
Define the action of $A1_{\epsilon}$ on $V$ by
\begin{alignat}{3}
	H v_{00} &= \sqrt{2\epsilon} v_{00}, &\qquad
	Q_1 v_{01} &= v_{11}, &\qquad Q_1 v_{11} &= v_{01}, 
	\notag \\ 
	Q_2 v_{10} &= v_{11}, & Q_2 v_{11} &= \epsilon v_{10}, &
	Q_3 v_{10} &= v_{01}, \qquad Q_3 v_{01} = \epsilon v_{10}
\end{alignat}
and all others vanish. 
This defines a reducible representation $\rho$ of $ A1_{\epsilon}$ where $\rho(H) =  \mathrm{diag}(\sqrt{2\epsilon}, 0, 0, 0) $ and $ \rho(Q_k) $ is identical to the adjoint representation. 
Taking the $ 4 \times 4 $ identity matrix as the matrix $M$ of \eqref{MatrixM}, we define the invariant bilinear form on $A1_{\epsilon}$ by
\begin{equation}
	B_{ab} := \frac{\epsilon}{2} tr( \rho(Q_a) \rho(Q_b))
\end{equation}
where $ a, b = 0, 1, 2, 3 $ and $ Q_0 :=H $ (we often write $Q_0$ instead of $H$ in the sequel). 
Explicitly, the bilnear form is given by the diagonal matrix:
\begin{equation}
	B = \mathrm{diag}(1,\epsilon,1,1)
\end{equation} 
which shows that $ B$ is symmetric, non-degenerate and  its inverse, denoted by $C$, is identical to itself ($C^{ab} = B_{ab}$).  

From Proposition \ref{PROP:S-Casi} (see also Proposition  \ref{PROP:CasimirDef}), the non-graded Casimir element of $A1_{\epsilon}$ is given by
\begin{equation}
	c = \sum_{a,b =0}^3 C^{ab} Q_a Q_b = H^2 + \epsilon Q_1^2 + Q_2^2 + Q_3^2. \label{CasA1e}
\end{equation}

\section{$A1_{\epsilon}$ weight system for framed knots} \label{SEC:A1WSframe}
\setcounter{equation}{0}

The center $Z(\mathcal{U}(A1_{\epsilon}))$ is generated by $H$ and the Casimir $ c $ \eqref{CasA1e}. However, it turns out that a weight system constructed from $A1_{\epsilon}$ is a polynomial in $c$ and $ y:= c-H^2. $ 
Here we give the values of $w(D)$ for the chord diagrams of order two 
\begin{center}
\begin{tblr}{
		cell{1,2}{1,2} = {halign=c,wd=2.5cm}
	}
	\begin{tikzpicture}[baseline=0]
		\draw[thick] (30:.7) coordinate(A) -- (150:.7) coordinate(B);
		\draw[thick] (-30:.7) coordinate(C) -- (-150:.7) coordinate(D);
		\draw [very thick] (0,0) circle (.7);
		\foreach \l in {A,B,C,D}
		\fill (\l) circle (1.5pt);
	\end{tikzpicture}
	&
	\begin{tikzpicture}[baseline=0]
		\draw[thick] (45:.7) coordinate(A) -- (-135:.7) coordinate(B);
		\draw[thick] (135:.7) coordinate(C) -- (-45:.7) coordinate(D);
		\draw [very thick] (0,0) circle (.7);
		\foreach \l in {A,B,C,D}
		\fill (\l) circle (1.5pt);
	\end{tikzpicture}
	&
	\\[5pt]
	$ c^2 $  & $c^2 -\epsilon y $ 
\end{tblr}
\end{center}
and of order three
\begin{center}
	\begin{tblr}{
			cell{1,2}{1,2,3,4} = {halign=c,wd=2.5cm}
		}
		\begin{tikzpicture}[baseline=0]
			\draw[thick] (35:.7) coordinate(A) -- (145:.7) coordinate(B);
			\draw[thick] (0:.7) coordinate(C) -- (180:.7) coordinate(D);
			\draw[thick] (215:.7) coordinate(E) -- (-35:.7) coordinate(F);
			\draw [very thick] (0,0) circle (.7);
			\draw [very thick] (0,0) circle (.7);
			\foreach \l in {A,B,C,D,E,F}
			\fill (\l) circle (1.5pt);
		\end{tikzpicture}
		&
		\begin{tikzpicture}[baseline=0]
			\draw[thick] (35:.7) coordinate(A) -- (180:.7) coordinate(B);
			\draw[thick] (0:.7) coordinate(C) -- (145:.7) coordinate(D);
			\draw[thick] (215:.7) coordinate(E) -- (-35:.7) coordinate(F);
			\draw [very thick] (0,0) circle (.7);
			\foreach \l in {A,B,C,D,E,F}
			\fill (\l) circle (1.5pt);
		\end{tikzpicture}
		&
		\begin{tikzpicture}[baseline=0]
			\draw[thick] (30:.7) coordinate(A) -- (150:.7) coordinate(B);
			\draw[thick] (-30:.7) coordinate(C) -- (-150:.7) coordinate(D);
			\draw[thick] (90:.7)  coordinate(E) -- (270:.7) coordinate (F);
			\draw [very thick] (0,0) circle (.7);
			\foreach \l in {A,B,C,D,E,F}
			\fill (\l) circle (1.5pt);
		\end{tikzpicture}
		&
		\begin{tikzpicture}[baseline=0]
			\draw[thick] (45:.7) coordinate(A) -- (-135:.7) coordinate(B);
			\draw[thick] (135:.7) coordinate(C) -- (-45:.7) coordinate(D);
			\draw[thick] (90:.7) coordinate(E) -- (270:.7) coordinate(F);
			\draw [very thick] (0,0) circle (.7);
			\foreach \l in {A,B,C,D,E,F}
			\fill (\l) circle (1.5pt);
		\end{tikzpicture}
		\\[5pt]
		$ c^3 $ & $c^3-\epsilon cy$ & $c^3-2\epsilon cy+y$ & $c^3-3\epsilon cy+2y$  	
	\end{tblr}
\end{center}
The  $w(D)$ for a Jacobi diagram is obtained by resolving it to chord diagrams, e.g., 
\begin{center}
	\begin{tblr}{cell{1,2}{1,2,3} = {halign=c,wd=2.5cm},
		row{2} = {mode=dmath}}
		\begin{tikzpicture}[baseline=0]
			\draw [very thick] (0,0) circle (.7);
			\draw[thick] (90:.7) coordinate(A) --(0,0);
			\draw[thick] (210:.7) coordinate(B) --(0,0);
			\draw[thick] (330:.7) coordinate(C) --(0,0);
			\foreach \l in {A,B,C}
				\fill (\l) circle (1.5pt);
		\end{tikzpicture}
		&
		\begin{tikzpicture}[baseline=0]
			\draw [very thick] (0,0) circle (.7);
			\draw[thick] (0,0) circle (.3);
			\draw[thick] (90:.7) coordinate(A) --(90:.3);
			\draw[thick] (270:.7) coordinate(B) --(270:.3);
			\foreach \l in {A,B}
			   \fill (\l) circle (1.5pt);
		\end{tikzpicture}
		&
		\begin{tikzpicture}[baseline=0]
			\draw [very thick] (0,0) circle (.7);
			\draw[thick] (45:.7) coordinate(A) --(90:.2);
			\draw[thick] (135:.7) coordinate(B) --(90:.2);
			\draw[thick] (-45:.7) coordinate(C) --(-90:.2);
			\draw[thick] (-135:.7) coordinate(D) --(-90:.2);
			\draw[thick] (90:.2)--(-90:.2);
			\foreach \l in {A,B,C,D}
			\fill (\l) circle (1.5pt);
		\end{tikzpicture}
		\\[5pt]
		\epsilon y & 2 \epsilon y & y 
	\end{tblr}
\end{center}
In principle, it is possible to calculate $w(D)$ for any diagrams once we know its value for all chord diagrams. However, it is impossible to obtain general expression for $w(D)$ for all chord diagrams.  Therefore, we derive some relations which are useful for evaluating $w(D)$ for a given diagram. Our main result is a recurrence relation for chord diagrams.
\begin{thm} \label{THM:recrel}
Let $D$ be a chord diagram and  ``a" a fixed chord in $D$. Suppose that there are $k$ chords intersecting the chord $a$ and we denote the intersecting chords by $ b_1, b_2, \dots, b_k$. 
Then $w(D)$ is a polynomial in $c$ and $y$ satisfying the recurrence relation
	\begin{align}
		w(D) &= (c-\epsilon k) w(D_a) + \epsilon (c-y) \sum_i w(D_{a,i}) 
		+ \epsilon \sum_{i<j} \big( w(D_{a,ij}^{\parallel}) -w(D_{a,ij}^{\times}) \big)
		\notag \\
		&+ \epsilon (c-y) \sum_{i< j} \big( w(D_{a,ij}^{\ell r}) + w(D_{a,ij}^{r \ell}) - w(D_{a,ij}^{\ell \ell}) - w(D_{a,ij}^{rr} ) \big)  \label{RecRel}
	\end{align}
	where $ D_a, D_{a,i} $ and $ D_{a,ij}$ are the diagrams $ D-a, D-a-b_i$ and $ D - a- b_i - b_j,$  respectively. 
	The diagrams $ D_{a,ij}^{\parallel}$ and $ D_{a,ij}^{\times } $  (resp. $D_{a,ij}^{\ell r}, D_{a,ij}^{r\ell}, D_{a,ij}^{\ell\ell}$ and $D_{a,ij}^{rr}$ ) are the diagrams obtained by adding to $ D_{a,ij}$ two (resp. one) new chords in the following way: Draw the diagram $D$ so that the chord $a$ is vertical, then add the new chords by connecting the end points which are indicated below: 
\end{thm}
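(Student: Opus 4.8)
The plan is to compute $w(D)$ directly in the centre $Z(\mathcal{U}(A1_{\epsilon}))$ by means of Vaintrob's construction and to remove the chord $a$ by transporting one of its endpoints around $S^1$ until $a$ becomes an isolated chord. Draw $D$ with $a$ vertical; since the inverse form is the diagonal matrix $C=\mathrm{diag}(1,\epsilon,1,1)$, the chord $a$ inserts the Casimir tensor $\sum_{p} C^{pp} Q_p\otimes Q_p = H\otimes H + \epsilon Q_1\otimes Q_1 + Q_2\otimes Q_2 + Q_3\otimes Q_3$ at its two endpoints, and each crossing chord $b_i$ has exactly one endpoint on the arc separating the two legs of $a$. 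When $a$ has no crossings ($k=0$) the two legs meet with nothing in between, relation \eqref{isofac} applies, and $w(D)=c\,w(D_a)$; this is the leading term. For $k>0$ I would move one leg of $a$ across the intervening endpoints, recording the corrections produced at each crossing.

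First I would analyse a single crossing. Transporting a univalent vertex $Q_p$ of chord $a$ (with $Q_p\in\g_{\alpha}$) past an endpoint $Q_c\in\g_{\gamma}$ of $b_i$ replaces $Q_pQ_c$ by $\epsilon(\alpha,\gamma)\,Q_cQ_p+\llbracket Q_p,Q_c\rrbracket$, which is exactly the defining relation \eqref{ideal}. The reordered term lets the transport continue and ultimately contributes to $c\,w(D_a)$, while the bracket term $\sum_e \f{pc}{e}Q_e$ attaches the leg of $a$ to $b_i$ through a trivalent vertex that is then resolved by \eqref{STUpic}. Contracting this bracket against the remaining leg $Q_p$ of $a$ (summed with $C^{pp}$) and invoking the invariance of the Casimir tensor recorded in Proposition \ref{PROP:S-Casi} gives two types of output. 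Because $H$ is central the $H\otimes H$ part of the Casimir passes every vertex freely and never enters a bracket; this is the structural reason that the corrections are governed by the purely odd element $y=\epsilon Q_1^2+Q_2^2+Q_3^2$ and the scalar $H^2=c-y$ rather than by $c$ itself. Using the explicit products $\{Q_1,Q_2\}=Q_3$, $\{Q_2,Q_3\}=\epsilon Q_1$, $\{Q_3,Q_1\}=Q_2$ (so that the quadratic Casimir acts in the adjoint by a multiple of $\epsilon$ on each $Q_k$ and by zero on $H$), I would show that the diagonal self-interaction of $a$ with each single $b_i$ produces the scalar $-\epsilon$, and hence the shift $-\epsilon k$ in the leading coefficient, whereas the contraction that merges and annihilates $a$ and $b_i$ carries the factor $\epsilon(c-y)=\epsilon H^2$ and leaves $D_{a,i}$, giving the term $\epsilon(c-y)\sum_i w(D_{a,i})$. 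The $k=1$ diagram of order two, whose value is $c^2-\epsilon y=(c-\epsilon)c+\epsilon(c-y)$, already exhibits both effects.

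Next I would treat the second-order corrections, which arise when the transported leg brackets in turn with endpoints of two distinct crossing chords $b_i$ and $b_j$ ($i<j$). Using the equivariance of $f$ encoded in conditions~3--4 of Definition \ref{DEF:SLie} together with the $A1_{\epsilon}$ multiplication table, the four free endpoints of $b_i$ and $b_j$ get reconnected in precisely the six admissible patterns. The double bracket that stays valued in the odd span of $Q_1,Q_2,Q_3$ produces the scalar-coefficient combination $\epsilon\big(w(D^{\parallel}_{a,ij})-w(D^{\times}_{a,ij})\big)$, the $sl(2)$-type contribution, while the contractions that run once through the central $H$-direction again carry $\epsilon(c-y)=\epsilon H^2$ and assemble into the four one-chord reconnections $\epsilon(c-y)\big(w(D^{\ell r}_{a,ij})+w(D^{r\ell}_{a,ij})-w(D^{\ell\ell}_{a,ij})-w(D^{rr}_{a,ij})\big)$, the $gl(1|1)$-type contribution. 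Summing the leading term with all first- and second-order corrections yields \eqref{RecRel}; since the right-hand side involves only diagrams of strictly lower order, with coefficients lying in $\mathbb{K}[c,y]$ (note $H^2=c-y$), the polynomiality of $w(D)$ in $c$ and $y$ then follows by induction on the order.

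The main obstacle is the bookkeeping of the second-order terms: one must check that every way of bracketing the moving leg with the endpoints of $b_i$ and $b_j$, followed by the \eqref{STUpic} resolution of the resulting trivalent vertices, reassembles into exactly these six diagram types with the stated signs, and that the commuting-factor signs $\epsilon(\alpha,\beta)=(-1)^{\alpha\cdot\beta}$ accumulated while transporting legs past one another combine correctly. A related delicate point is to confirm that no genuinely third-order interaction survives---contributions in which the leg brackets with three endpoints must cancel or reduce---so that the recurrence closes at the pairwise level; the tabulated order-two and order-three values serve as the consistency checks for the coefficients.
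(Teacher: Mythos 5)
Your overall strategy --- slide an endpoint of $a$ past the intervening endpoints, collect an STU correction at each crossing, and argue that the corrections close at pairwise order --- is the same idea that drives the paper's proof, which is an induction on $\ell$, the number of endpoints to the left of $a$. Your first-order analysis is essentially right: each crossing shifts the coefficient of $w(D_a)$ by $-\epsilon$ and produces $\epsilon(c-y)\,w(D_{a,i})$, exactly as in the paper's $\ell=1$ base case (one application of \eqref{STUpic} followed by Proposition \ref{PROP:Y}), and the $\ell=0$ case is \eqref{isofac}.

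The gap is at second order, and it is not a bookkeeping detail that can be deferred: it is the mathematical core of the theorem. The paper isolates it as Lemma \ref{LEM:ten}, a collection of four-diagram ``second difference'' identities obtained by applying \eqref{STUpic} twice to the left-hand sides of Propositions \ref{PROP:geta} and \ref{PROP:cross}; those propositions are in turn proved by an explicit computation in $\mathcal{U}(A1_{\epsilon})$ that cuts the circle, expands the weight in the structure constants, and uses the identity \eqref{ftildrel} together with the centrality of $H$ and $H^2=c-y$. Two claims in your sketch rest on that computation and are asserted rather than proved. First, that the double-bracket contributions reassemble into exactly the six reconnection patterns with the stated signs: this is precisely the content of \eqref{ftildrel}, which converts $\sum_a \tf{ia}{j}\,\tf{ak}{\ell}$ into Kronecker deltas (i.e.\ reconnections) with a sign $\ph{i}{j}$, and it is the splitting of the resulting sums according to whether an index equals $0$ (using $Q_0^2=c-y$) that separates the $\Lambda$-type two-chord terms from the $\Gamma$-type one-chord terms. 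Second, that no third-order interaction survives: in the paper this is automatic because after \eqref{ftildrel} is applied no structure constants remain, so the relation closes at the pairwise level; in your transport picture, where the correction produced at crossing $i$ still carries a trivalent vertex when the moving leg reaches crossing $j$, this cancellation would have to be proved separately, and you give no argument. You yourself flag both points as ``the main obstacle'' and ``a delicate point,'' which is an accurate self-assessment: without an analogue of Lemma \ref{LEM:ten} the proposal is a plausible ansatz consistent with the tabulated low-order values, not a proof. A complete argument along your lines would also need a well-founded induction to control how corrections from different crossings combine; the paper achieves this by inducting on $\ell$, enumerating the seven local configurations of the two endpoints adjacent to $a$, and applying the induction hypothesis to the three diagrams obtained by moving those endpoints across $a$.
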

\begin{center}
\begin{tblr}{Q[c,m] Q[l,m]}
diagram & \SetCell[c=1]{c} new chords
\\ \hline 
$ D_{a,ij}^{\parallel}$ & {left end of $b_i$ and  left end of $b_j$, \\
	                        right end of $b_i$ and right end of $b_j$}
\\[7pt]
$ D_{a,ij}^{\times}$ & {left end of $b_i$ and  right end of $b_j$, \\
						right end of $b_i$ and  left end of $b_j$}
\\[7pt]
	$ D_{a,ij}^{\ell r}$
& left end of $b_i$ and right end of $b_j$	
\\[7pt]
$ D_{a,ij}^{r\ell}$
& right end of $b_i$ and left end of $b_j$
\\[7pt]
$ D_{a,ij}^{\ell\ell}$
& left ends of $b_i$ and  $b_j$
\\[7pt]
$ D_{a,ij}^{r r}$
& right ends of $b_i$ and  $b_j$				
\end{tblr}
\end{center}
The diagrams appearing in \eqref{RecRel} are as follows:
\begin{center}
\begin{tikzpicture}[scale=0.7]
	\draw[dotted,thick] (0,0) circle [radius=1.2];
	\foreach \t/\label in {30/A,90/B,150/C,210/D,270/E,330/F}{
		\draw[very thick] (\t-15:1.2) arc (\t-15:\t+15:1.2);
		\coordinate (\label) at ({1.2*cos(\t)},{1.2*sin(\t)});
		\fill (\label) circle (1.8pt);
	}
	\draw[thick] (B)--(E) node[pos=0.5,right] {$a$};
	\draw[thick,] (A) ..controls ($(A)!0.5!(C)+(0,-0.15)$).. (C) node[pos=0.7,yshift=6pt] {$b_i$};
	\draw[thick,bend left=10] (D)..controls ($(D)!0.5!(F)+(0,0.15)$).. (F) node[pos=0.3,yshift=7pt] {$b_j$};
	\node at (-2,0) {$D = $};
	\begin{scope}[xshift=5.2cm]
		\draw[dotted,thick] (0,0) circle [radius=1.2];
		\foreach \t/\label in {30/A,90/B,150/C,210/D,270/E,330/F}{
		   \draw[very thick] (\t-15:1.2) arc (\t-15:\t+15:1.2);
		   \coordinate (\label) at ({1.2*cos(\t)},{1.2*sin(\t)});
		   \fill (\label) circle (1.8pt);
		}
		\draw[thick,] (A) ..controls ($(A)!0.5!(C)+(0,-0.15)$).. (C);
		\draw[thick,bend left=10] (D)..controls ($(D)!0.5!(F)+(0,0.15)$).. (F); 
		\node at (-2.2,0) {$D_a = $};
	\end{scope}
	\begin{scope}[xshift=10.4cm]
		\draw[dotted,thick] (0,0) circle [radius=1.2];
		\foreach \t/\label in {30/A,90/B,150/C,210/D,270/E,330/F}{
			\draw[very thick] (\t-15:1.2) arc (\t-15:\t+15:1.2);
			\coordinate (\label) at ({1.2*cos(\t)},{1.2*sin(\t)});
			\fill (\label) circle (1.8pt);
		}
		\draw[thick] (D)..controls ($(D)!0.5!(F)+(0,0.15)$).. (F); 		
		\node at (-2.2,0) {$D_{a,i} = $};
	\end{scope}	
	\begin{scope}[xshift=15.6cm]
		\draw[dotted,thick] (0,0) circle [radius=1.2];
		\foreach \t/\label in {30/A,90/B,150/C,210/D,270/E,330/F}{
			\draw[very thick] (\t-15:1.2) arc (\t-15:\t+15:1.2);
			\coordinate (\label) at ({1.2*cos(\t)},{1.2*sin(\t)});
			\fill (\label) circle (1.8pt);
		}
		\draw[thick,] (A) ..controls ($(A)!0.5!(C)+(0,-0.15)$).. (C);
		\node at (-2.2,0) {$D_{a,j} = $};
	\end{scope}
	\begin{scope}[yshift=-3.3cm]
		\draw[dotted,thick] (0,0) circle [radius=1.2];
		\foreach \t/\label in {30/A,90/B,150/C,210/D,270/E,330/F}{
			\draw[very thick] (\t-15:1.2) arc (\t-15:\t+15:1.2);
			\coordinate (\label) at ({1.2*cos(\t)},{1.2*sin(\t)});
			\fill (\label) circle (1.8pt);
		}
		\draw[thick,bend left] (C) to (D);	
		\draw[thick,bend left] (F) to (A);
		\node at (-2.4,0) {$D_{a,ij}^{\parallel} = $};
	\end{scope}
	\begin{scope}[xshift=5.2cm, yshift=-3.3cm]
		\draw[dotted,thick] (0,0) circle [radius=1.2];
		\foreach \t/\label in {30/A,90/B,150/C,210/D,270/E,330/F}{
			\draw[very thick] (\t-15:1.2) arc (\t-15:\t+15:1.2);
			\coordinate (\label) at ({1.2*cos(\t)},{1.2*sin(\t)});
			\fill (\label) circle (1.8pt);
		}
		\draw[thick] (C) to (F);	
		\draw[thick] (D) to (A);
		\node at (-2.4,0) {$D_{a,ij}^{\times} = $};
	\end{scope}
	\begin{scope}[xshift=10.4cm, yshift=-3.3cm]
		\draw[dotted,thick] (0,0) circle [radius=1.2];
		\foreach \t/\label in {30/A,90/B,150/C,210/D,270/E,330/F}{
			\draw[very thick] (\t-15:1.2) arc (\t-15:\t+15:1.2);
			\coordinate (\label) at ({1.2*cos(\t)},{1.2*sin(\t)});
			\fill (\label) circle (1.8pt);
		}
		\draw[thick] (C) -- (F);
		\node at (-2.4,0) {$D_{a,ij}^{\ell r} = $};
	\end{scope}
	\begin{scope}[xshift=15.6cm, yshift=-3.3cm]
		\draw[dotted,thick] (0,0) circle [radius=1.2];
		\foreach \t/\label in {30/A,90/B,150/C,210/D,270/E,330/F}{
			\draw[very thick] (\t-15:1.2) arc (\t-15:\t+15:1.2);
			\coordinate (\label) at ({1.2*cos(\t)},{1.2*sin(\t)});
			\fill (\label) circle (1.8pt);
		}
		\draw[thick] (D) -- (A);
		\node at (-2.4,0) {$D_{a,ij}^{r\ell} = $};
	\end{scope}
	\begin{scope}[yshift=-6.6cm]
		\draw[dotted,thick] (0,0) circle [radius=1.2];
		\foreach \t/\label in {30/A,90/B,150/C,210/D,270/E,330/F}{
			\draw[very thick] (\t-15:1.2) arc (\t-15:\t+15:1.2);
			\coordinate (\label) at ({1.2*cos(\t)},{1.2*sin(\t)});
			\fill (\label) circle (1.8pt);
		}
		\draw[thick,bend left] (C) to (D);	
		\node at (-2.4,0) {$D_{a,ij}^{\ell\ell} = $};
	\end{scope}
	\begin{scope}[xshift=5.2cm, yshift=-6.6cm]
		\draw[dotted,thick] (0,0) circle [radius=1.2];
		\foreach \t/\label in {30/A,90/B,150/C,210/D,270/E,330/F}{
			\draw[very thick] (\t-15:1.2) arc (\t-15:\t+15:1.2);
			\coordinate (\label) at ({1.2*cos(\t)},{1.2*sin(\t)});
			\fill (\label) circle (1.8pt);
		}
		\draw[thick,bend left] (F) to (A);
		\node at (-2.4,0) {$D_{a,ij}^{rr} = $};
	\end{scope}
\end{tikzpicture}
\end{center}

The recurrence relation \eqref{RecRel} shows that the $A1_{\epsilon}$ weight system is a hybrid of those of $sl(2) $ and $gl(1|1)$. 
The first four terms of $\eqref{RecRel}$ (from $w(D_a)$ to $ w(D_{a,ij}^{\times})$) are also found in the $sl(2)$ recurrence relation and the last four terms (from $ w(D_{a,ij}^{\ell r})$ to $ w(D_{a,ij}^{rr})$) are in that of $ gl(1|1)$. 

We also present some relations involving Jacobi diagrams. 
Using the STU and AS relations, it is easy to see that
\begin{equation}
	w\left(
	\begin{tikzpicture}[baseline=0,scale=0.8]
		\draw[very thick] (0,0) circle (1);
		\draw (50:1) coordinate(A) -- (40:0.7);
		\draw (130:1) coordinate(B)-- (140:0.7);
		\node at (90:0.76) {$\cdots\cdot$};
		\fill[pattern=north east lines] (0,0.2) ellipse (.8 and .4);
		\draw[thick] (0,-0.55) circle (0.15);
		\draw[thick] (0,-0.2)--(0,-0.4);
		\draw[thick] (0,-0.68)--(0,-1) coordinate (C);
		\foreach \l in {A,B,C}
			\fill (\l) circle (1.8pt);
	\end{tikzpicture}
	\right)
	= 2
	w\left(
	 \begin{tikzpicture}[baseline=0,scale=0.8]
	 	\draw[very thick] (0,0) circle (1);
	 	\draw (50:1) coordinate(A) -- (40:0.7);
	 	\draw (130:1) coordinate(B)-- (140:0.7);
	 	\node at (90:0.76) {$\cdots\cdot$};
	 	\fill[pattern=north east lines] (0,0.2) ellipse (.8 and .4);
	 	\draw[thick] (0,-0.2)--(0,-0.6);
	 	\draw[thick] (0,-0.6)--(-70:1) coordinate(C);
	 	\draw[thick] (0,-0.6)--(-110:1) coordinate(D);
	 	\foreach \l in {A,B,C,D}
	 		\fill (\l) circle (1.8pt);
	\end{tikzpicture}
	\right)
\end{equation}
where the shaded region indicates all other edges. 
This is true for any weight systems because we need the STU and AS relations to show this.  
The relations specific to $A1_{\epsilon}$ are given in the following propositions: 

\begin{prop}
\label{PROP:Y}
\begin{equation}
	w\left(
	  \begin{tikzpicture}[baseline=0,scale=0.8]
	  	\draw[dotted,thick] (0,0) circle [radius=1];
	  	\draw[very thick] (70:1) arc (70:110:1);
	  	\draw[very thick] (240:1) arc (240:300:1);
	  	\draw[thick] (90:1) coordinate(A) --(270:.5) coordinate(v);
	  	\draw[thick] (v) -- (250:1) coordinate (B);
	  	\draw[thick] (v) -- (290:1) coordinate (C);
	  	\foreach \l in {A,B,C}
	  		\fill (\l) circle (1.8pt);
	  \end{tikzpicture}
	\right)
	= \epsilon \cdot 
	  w\left(
	    \begin{tikzpicture}[baseline=0,scale=0.8]
	    	\draw[dotted,thick] (0,0) circle [radius=1];
	    	\draw[very thick] (70:1) arc (70:110:1);
	    	\draw[very thick] (250:1) arc (250:290:1);
	    	\draw[thick] (90:1) coordinate(A) -- (270:1) coordinate(B);
	    	\foreach \l in {A,B}
	    		\fill (\l) circle (1.8pt);
	    \end{tikzpicture}
	  \right)
	  - \epsilon (c-y)
	  w\left(
	  	\begin{tikzpicture}[baseline=0,scale=0.8]
	  		\draw[dotted,thick] (0,0) circle [radius=1];
	  		\draw[very thick] (70:1) arc (70:110:1);
	  		\draw[very thick] (250:1) arc (250:290:1);
	  	\end{tikzpicture}
	  \right). \label{y}
\end{equation}
\end{prop}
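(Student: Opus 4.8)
The plan is to evaluate the left-hand Y-diagram directly and then recognise the answer as $\epsilon c-\epsilon(c-y)$. Conceptually the internal trivalent vertex $v$ is the graded bracket, so resolving it via the STU relation \eqref{STUpic} replaces the diagram by $\llbracket\,\cdot\,,\,\cdot\,\rrbracket$ of the generators attached at the two lower legs $B,C$, carried along the stem into the upper leg $A$. Concretely I would write the weight as $\sum \Phi^{abc}X_aX_bX_c$ (product taken in the cyclic order $A,B,C$), where $\Phi^{abc}$ is the invariant cubic tensor obtained from $\f{bc}{a}$ by raising the two lower indices with $C=B=\mathrm{diag}(1,\epsilon,1,1)$. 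Because $H=Q_0$ is central, $[H,Q_k]=0$, every nonzero term runs only over the sector spanned by $Q_1,Q_2,Q_3$, so only the six structure constants $\f{12}{3}=\f{21}{3}=1$, $\f{23}{1}=\f{32}{1}=\epsilon$, $\f{31}{2}=\f{13}{2}=1$ contribute.

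With the explicit form of $C$ one checks that all six index triples enter $\Phi^{abc}$ with the same coefficient $\epsilon$, so the weight is $\epsilon$ times the sum of the six orderings of $Q_1Q_2Q_3$. Grouping these into the three pairs that differ by transposing the last two factors and using the enveloping-algebra relation $X_aX_b=\epsilon(\alpha,\beta)X_bX_a+\llbracket X_a,X_b\rrbracket$ (every off-diagonal commuting factor among $Q_1,Q_2,Q_3$ equals $-1$ here) collapses each pair to an anticommutator: $Q_3\{Q_1,Q_2\}+Q_1\{Q_2,Q_3\}+Q_2\{Q_3,Q_1\}=Q_3^2+\epsilon Q_1^2+Q_2^2=y$. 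Hence the weight of the Y-diagram is $\epsilon y$. Since the two diagrams on the right are an isolated single chord and the empty arc, whose weights are the Casimir $c$ and $1$ by Remark~2 and the factorisation \eqref{isofac}, rewriting $\epsilon y=\epsilon c-\epsilon(c-y)$ with $c-y=H^2$ yields the stated identity; the same factorisation of an isolated sub-diagram through its central value lets the relation be applied locally inside any larger diagram.

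The main difficulty will be the sign and grading bookkeeping rather than the length of the computation. The crucial structural point is that the bracket created at $v$ never involves the central generator $H$, so the contraction produces the ``$y$-sector'' $\epsilon Q_1^2+Q_2^2+Q_3^2=y$ rather than the full Casimir $c=H^2+y$; this mismatch with a genuine single chord, whose weight is the full $c$, is exactly what forces the correction $-\epsilon(c-y)=-\epsilon H^2$. Keeping the commuting factors $\epsilon(\alpha,\beta)$ consistent with the single nontrivial entry $C^{11}=\epsilon$ through the raising of indices, so that the overall prefactor comes out as $\epsilon$ and not $1$, is the step most likely to hide a sign error, and it is precisely where the color structure, as opposed to an ordinary Lie or Lie superalgebra, makes the relation non-trivial.
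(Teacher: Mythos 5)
Your computation of the standalone value of the $Y$-shaped Jacobi diagram is correct and agrees with the table in \S\ref{SEC:A1WSframe}: all six components of the cubic tensor equal $\epsilon$, and the sum over orderings collapses to $\epsilon(\epsilon Q_1^2+Q_2^2+Q_3^2)=\epsilon y=\epsilon c-\epsilon(c-y)$. However, this only establishes the proposition in the special case where the $Y$ is the entire diagram (or a connected summand). The dotted arcs in \eqref{y} mean that arbitrary other chords may have endpoints between the upper leg and the two lower legs of the $Y$ and may cross its stem, and this is exactly how the proposition is used later (in the derivation of \eqref{hige2} and in the $\ell=1$ base case of the proof of Theorem \ref{THM:recrel}). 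Your closing appeal to ``the same factorisation of an isolated sub-diagram through its central value'' does not cover this: the factorisation \eqref{isofac} requires the sub-diagram to occupy a contiguous arc with nothing interleaved, whereas a sub-diagram embedded in a larger one contributes not a central scalar but a tensor with a free index on the edge joining it to the rest of the diagram. Equality of fully contracted standalone weights does not imply that two configurations are interchangeable inside arbitrary diagrams.

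The paper's proof supplies precisely the missing component-wise statement. Writing $A_\alpha\in\mathcal{U}(A1_{\epsilon})$ for the contribution of the rest of the diagram with label $\alpha$ on the edge entering the $Y$, one finds $D_L=A_1Q_1+\epsilon(A_2Q_2+A_3Q_3)$ for the $Y$ and $D_R=A_0Q_0+\epsilon A_1Q_1+A_2Q_2+A_3Q_3$ for the single chord, whence $D_L=\epsilon D_R-\epsilon A_0Q_0$; centrality of $Q_0=H$ is used only to pull $Q_0^2=c-y$ out of the $\alpha=0$ term. The identity thus rests on the $\alpha=1,2,3$ components of the $Y$ being uniformly $\epsilon$ times those of the chord, information that is invisible in the contracted scalar $\epsilon y$. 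To repair your argument you would need to redo your tensor computation with the upper leg left uncontracted against an arbitrary environment, which is essentially the paper's proof.
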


\begin{prop} \label{PROP:geta}
	\begin{align}
		w\left(
			\begin{tikzpicture}[baseline=0,scale=.8]
				\draw[dotted,thick] (0,0) circle [radius=1];
				\foreach \t/\l in {45/A,135/B,225/C,315/D}{
					\fill (\t:1) circle (1.8pt) coordinate (\l);
					\draw[very thick] (\t-15:1) arc(\t-15:\t+15:1);
				}
				\coordinate (v1) at (-0.4,0); \coordinate (v2) at (0.4,0);
				\draw[thick] (v1)--(v2);
				\draw[thick] (A)--(v2); \draw[thick] (D)--(v2);
				\draw[thick] (B)--(v1); \draw[thick] (C)--(v1);
			\end{tikzpicture}
		\right) 
		&=
		\epsilon\cdot 
		w\left(
			\begin{tikzpicture}[baseline=0,scale=.8]
				\draw[dotted,thick] (0,0) circle [radius=1];
				\foreach \t/\l in {45/A,135/B,225/C,315/D}{
					\fill (\t:1) circle (1.8pt) coordinate (\l);
					\draw[very thick] (\t-15:1) arc(\t-15:\t+15:1);
				}
				\draw[thick] (A)to[bend left] (B); \draw[thick] (C) to[bend left] (D);
			\end{tikzpicture}
		-
			\begin{tikzpicture}[baseline=0,scale=.8]
				\draw[dotted,thick] (0,0) circle [radius=1];
				\foreach \t/\l in {45/A,135/B,225/C,315/D}{
					\fill (\t:1) circle (1.8pt) coordinate (\l);
					\draw[very thick] (\t-15:1) arc(\t-15:\t+15:1);
				}
				\draw[thick] (A)--(C); \draw[thick] (B)--(D);
			\end{tikzpicture}
		\right)
		\notag \\
		&+ \epsilon (c-y) \cdot w
		\left(
			\begin{tikzpicture}[baseline=0,scale=.8]
				\draw[dotted,thick] (0,0) circle [radius=1];
				\foreach \t/\l in {45/A,135/B,225/C,315/D}{
					\coordinate (\l) at (\t:1);
					\draw[very thick] (\t-15:1) arc(\t-15:\t+15:1);
				}
				\draw[thick] (C)--(A); 
				\fill (A) circle (1.8pt); \fill (C) circle (1.8pt);
			\end{tikzpicture}
			+
			\begin{tikzpicture}[baseline=0,scale=.8]
				\draw[dotted,thick] (0,0) circle [radius=1];
				\foreach \t/\l in {45/A,135/B,225/C,315/D}{
					\coordinate (\l) at (\t:1);
					\draw[very thick] (\t-15:1) arc(\t-15:\t+15:1);
				}
				\draw[thick] (B)--(D); 
				\fill (B) circle (1.8pt); \fill (D) circle (1.8pt);
			\end{tikzpicture}
			-
			\begin{tikzpicture}[baseline=0,scale=.8]
				\draw[dotted,thick] (0,0) circle [radius=1];
				\foreach \t/\l in {45/A,135/B,225/C,315/D}{
					\coordinate (\l) at (\t:1);
					\draw[very thick] (\t-15:1) arc(\t-15:\t+15:1);
				}
				\draw[thick] (A) to[bend left] (B); 
				\fill (A) circle (1.8pt); \fill (B) circle (1.8pt);
			\end{tikzpicture}
			-
			\begin{tikzpicture}[baseline=0,scale=.8]
				\draw[dotted,thick] (0,0) circle [radius=1];
				\foreach \t/\l in {45/A,135/B,225/C,315/D}{
					\coordinate (\l) at (\t:1);
					\draw[very thick] (\t-15:1) arc(\t-15:\t+15:1);
				}
				\draw[thick] (C) to[bend left] (D); 
				\fill (D) circle (1.8pt); \fill (C) circle (1.8pt);
			\end{tikzpicture}
		\right).
		\label{Geta}
	\end{align}
\end{prop}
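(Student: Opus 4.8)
The plan is to evaluate the ``geta'' (H-shaped) Jacobi diagram on the left of \eqref{Geta} by resolving its two internal trivalent vertices into chord diagrams, and then to identify the surviving chord diagrams among the order-two diagrams already tabulated. Write $v_1$ for the trivalent vertex joined to the two left univalent points $B,C$ and $v_2$ for the one joined to the two right univalent points $A,D$, the two being linked by the central edge. Since the value of any closed diagram lies in $Z(\mathcal{U}(A1_{\epsilon}))$ and an isolated chord evaluates to $c$ by \eqref{isofac}, it suffices to reduce the left-hand diagram to a $\mathbb{K}[c,y]$-combination of the parallel two-chord diagram (value $c^2$), the crossing two-chord diagram (value $c^2-\epsilon y$), and the single chords (value $c$), all of which are known.

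The key step is to bring each internal vertex onto the skeleton. Because the two legs of $v_1$ (resp.\ $v_2$) terminate at \emph{adjacent} points of $S^1$, I would apply the STU relation \eqref{STUpic} at those skeleton endpoints to eliminate $v_2$ first: this rewrites the $H$-diagram as a linear combination of (i) diagrams carrying a single internal trivalent vertex whose third leg now lands directly on $S^1$, i.e.\ genuine $Y$-diagrams, and (ii) diagrams in which the central edge has been resolved into a crossing or a plain chord. To the $Y$-diagrams of type (i) Proposition \ref{PROP:Y} now applies \emph{legitimately}, since their apex sits on the skeleton, and \eqref{y} replaces each one by $\epsilon\,(\text{single chord}) - \epsilon(c-y)\,(\text{bare arcs})$. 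Iterating the same two moves on the remaining vertex turns every term into a chord diagram, and the coefficients $\epsilon$ and $\epsilon(c-y)$ produced at the two resolutions combine.

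I expect the main obstacle to be exactly this localisation together with the bookkeeping of signs. Proposition \ref{PROP:Y} cannot be applied directly at an internal vertex, since its hypothesis presupposes the apex on $S^1$; a naive application treating the central edge as an apex introduces spurious single-chord terms (and hence an unwanted $\epsilon c(c-y)$), so the two vertices must genuinely be pushed to the skeleton by STU moves, with the induced crossings and Casimir factors tracked so that these extraneous $c^2$- and $(c-y)$-contributions cancel and only the eight terms of \eqref{Geta} remain, organised as the parallel pair minus the crossing pair (coefficient $\epsilon$) and the four single chords $w(A\,C)+w(B\,D)-w(A\,B)-w(C\,D)$ (coefficient $\epsilon(c-y)$). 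As a consistency check that fixes the overall signs, inserting the tabulated values (two parallel chords $=c^2$, two crossing chords $=c^2-\epsilon y$, single chord $=c$) collapses both sides of \eqref{Geta} to the common value $y$, in agreement with the value of the $H$-diagram recorded among the order-three Jacobi examples.
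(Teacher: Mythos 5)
There is a genuine gap, and it sits exactly where you anticipated trouble. Proposition \ref{PROP:geta} is a \emph{local} relation: the four arcs carrying $A,B,C,D$ are separated by dotted portions of the skeleton that may contain arbitrarily many further chord endpoints. Your reduction applies an STU move at one trivalent vertex and then invokes Proposition \ref{PROP:Y} on the resulting $Y$-pieces. But the hypothesis of Proposition \ref{PROP:Y} is not only that the apex of the $Y$ lies on the skeleton; its picture also requires the \emph{two feet} of the $Y$ to lie on a common solid arc, i.e.\ to be adjacent with nothing between them. After resolving, say, the vertex attached to $A$ and $D$, the surviving $Y$ has its feet at $B$ and $C$, which are separated by a dotted arc, so Proposition \ref{PROP:Y} does not apply. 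Worse, if you apply it anyway (pretending $B$ and $C$ adjacent), the two $-\epsilon(c-y)\cdot(\text{single chord})$ contributions coming from the two STU-resolved terms are isotopic diagrams and cancel in the difference, so this route produces \emph{no} $\epsilon(c-y)$ term at all --- whereas the four single-chord diagrams on the right of \eqref{Geta} do not cancel in general, because each is weighted by a different arrangement of the elements of $\mathcal{U}(A1_{\epsilon})$ sitting in the dotted regions (these are exactly the source of the terms linear in $y$ in the order-three and order-four tables). Your consistency check only tests the closed diagram with empty dotted regions, where all four single chords equal $c$ and their alternating sum vanishes, so it cannot detect this failure; likewise, ``identifying the surviving chord diagrams among the tabulated order-two values'' is only meaningful for closed diagrams and cannot establish a local identity.

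The paper's proof avoids the issue by working directly in $\mathcal{U}(A1_{\epsilon})$: it cuts the circle open and writes the weight of the left-hand side as $\sum b_i b_a b_j b_k b_{\ell}\, \tf{ia}{j}\tf{a\ell}{k}\, A\, Q_i T Q_j U Q_k V Q_{\ell} W$, with $A,T,U,V,W$ arbitrary elements encoding the dotted regions, and then uses the explicit structure-constant identity \eqref{ftildrel} together with the centrality of $Q_0=H$ (so that $Q_0^2=c-y$ factors out) to match this, term by term, against the analogous expressions for the six diagrams on the right. A purely diagrammatic derivation along your lines would need a version of Proposition \ref{PROP:Y} valid for non-adjacent feet, but that statement is essentially Proposition \ref{PROP:geta} itself, so the argument would be circular.
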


\begin{prop} \label{PROP:cross}
	\begin{align}
		w\left(
		\begin{tikzpicture}[baseline=0,scale=.8]
			\draw[dotted,thick] (0,0) circle [radius=1];
			\foreach \t/\l in {45/A,135/B,225/C,315/D}{
				\fill (\t:1) circle (1.8pt) coordinate (\l);
				\draw[very thick] (\t-15:1) arc(\t-15:\t+15:1);
			}
			\draw[thick] (A)--(C);
			\draw[thick] (B)--(D); 
			\draw[thick] ($(A)!0.75!(C)$)--($(B)!0.25!(D)$);
		\end{tikzpicture}
		\right) 
		&=
		\epsilon\cdot 
		w\left(
		\begin{tikzpicture}[baseline=0,scale=.8]
			\draw[dotted,thick] (0,0) circle [radius=1];
			\foreach \t/\l in {45/A,135/B,225/C,315/D}{
				\fill (\t:1) circle (1.8pt) coordinate (\l);
				\draw[very thick] (\t-15:1) arc(\t-15:\t+15:1);
			}
			\draw[thick] (A)to[bend right] (D); \draw[thick] (B) to[bend left] (C);
		\end{tikzpicture}
		-
		\begin{tikzpicture}[baseline=0,scale=.8]
			\draw[dotted,thick] (0,0) circle [radius=1];
			\foreach \t/\l in {45/A,135/B,225/C,315/D}{
				\fill (\t:1) circle (1.8pt) coordinate (\l);
				\draw[very thick] (\t-15:1) arc(\t-15:\t+15:1);
			}
			\draw[thick] (A) to[bend left] (B); \draw[thick] (C) to[bend left] (D);
		\end{tikzpicture}
		\right)
		\notag \\
		&+ \epsilon (c-y) \cdot w
		\left(
		\begin{tikzpicture}[baseline=0,scale=.8]
			\draw[dotted,thick] (0,0) circle [radius=1];
			\foreach \t/\l in {45/A,135/B,225/C,315/D}{
				\coordinate (\l) at (\t:1);
				\draw[very thick] (\t-15:1) arc(\t-15:\t+15:1);
			}
			\draw[thick] (C) to[bend left] (D); 
			\fill (D) circle (1.8pt); \fill (C) circle (1.8pt);
		\end{tikzpicture}
		+
		\begin{tikzpicture}[baseline=0,scale=.8]
			\draw[dotted,thick] (0,0) circle [radius=1];
			\foreach \t/\l in {45/A,135/B,225/C,315/D}{
				\coordinate (\l) at (\t:1);
				\draw[very thick] (\t-15:1) arc(\t-15:\t+15:1);
			}
			\draw[thick] (A) to[bend left] (B); 
			\fill (A) circle (1.8pt); \fill (B) circle (1.8pt);
		\end{tikzpicture}
		-
		\begin{tikzpicture}[baseline=0,scale=.8]
			\draw[dotted,thick] (0,0) circle [radius=1];
			\foreach \t/\l in {45/A,135/B,225/C,315/D}{
				\coordinate (\l) at (\t:1);
				\draw[very thick] (\t-15:1) arc(\t-15:\t+15:1);
			}
			\draw[thick] (B) to[bend left] (C); 
			\fill (B) circle (1.8pt); \fill (C) circle (1.8pt);
		\end{tikzpicture}
		-
		\begin{tikzpicture}[baseline=0,scale=.8]
			\draw[dotted,thick] (0,0) circle [radius=1];
			\foreach \t/\l in {45/A,135/B,225/C,315/D}{
				\coordinate (\l) at (\t:1);
				\draw[very thick] (\t-15:1) arc(\t-15:\t+15:1);
			}
			\draw[thick] (D) to[bend left] (A); 
			\fill (D) circle (1.8pt); \fill (A) circle (1.8pt);
		\end{tikzpicture}
		\right).
		\label{cross}
	\end{align}
\end{prop}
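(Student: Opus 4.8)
The plan is to reduce the two internal trivalent vertices of the left-hand diagram of \eqref{cross} to chords, using the tripod identity of Proposition \ref{PROP:Y} as the engine that produces the $A1_{\epsilon}$-specific coefficients $\epsilon$ and $\epsilon(c-y)$. Writing the left-hand side as $w(\Gamma)$, where $\Gamma$ is the Jacobi diagram whose rung joins a trivalent vertex carrying the legs $\{A,C\}$ to a trivalent vertex carrying $\{B,D\}$, the immediate obstruction is that \eqref{y} applies only when the two legs meeting a vertex land on \emph{adjacent} points of the circle; here $A,C$ (and likewise $B,D$) sit at diagonally opposite positions, so \eqref{y} cannot be used directly. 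This is exactly what distinguishes \eqref{cross} from \eqref{Geta}: in the diagram of Proposition \ref{PROP:geta} the two legs at each vertex ($\{A,D\}$ and $\{B,C\}$) are already adjacent, so that proposition is obtained by applying \eqref{y} once at each vertex.

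First I would remove the obstruction with the IHX relation, that is, relation~6 of Definition \ref{DEF:SLie}, applied at the rung. Among the three Jacobi diagrams on the fixed boundary points $A,B,C,D$ that differ only in how the four legs are paired across the internal edge, our diagram realizes the pairing $\{A,C\}\,|\,\{B,D\}$; the other two are $\{A,B\}\,|\,\{C,D\}$ and $\{A,D\}\,|\,\{B,C\}$, and relation~6 expresses $w(\Gamma)$ as a combination of the weights of these two. Because the color Lie algebra realizes $S$ as $\epsilon$ times the flip (Proposition \ref{PROP:ColorS}), each interchange of a pair of legs forced by relation~6 contributes a factor $\epsilon$, so these factors must be tracked with care. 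The point of this step is that in both resulting diagrams each trivalent vertex now meets the circle in two adjacent points.

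Next I would evaluate the two pieces. The pairing $\{A,D\}\,|\,\{B,C\}$ is precisely the diagram of \eqref{Geta}, so Proposition \ref{PROP:geta} supplies its weight verbatim. For the pairing $\{A,B\}\,|\,\{C,D\}$ I would apply \eqref{y} at the $\{A,B\}$-vertex, replacing its tripod by the rung emanating from the fused $\{A,B\}$-point together with an $\epsilon(c-y)$ multiple of the diagram in which that vertex and its rung are erased; the tripod surviving at the $\{C,D\}$-vertex is then reduced by a second application of \eqref{y}. Substituting both evaluations into the IHX combination and using $\epsilon^2=1$, I would collect the two-chord contributions into the $\epsilon[\,w(\cdots)-w(\cdots)\,]$ term and the single-chord contributions into the $\epsilon(c-y)[\cdots]$ term of \eqref{cross}, invoking the 4T relation \eqref{4Tpic} to identify the chord diagrams produced by the fused endpoints with those listed on the right-hand side.

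The main obstacle will be the sign and $\epsilon$ bookkeeping rather than any conceptual difficulty: one must track the orientation sign from the AS relation at each trivalent vertex, the $\epsilon$ produced whenever relation~6 or the flip $S$ interchanges two legs, and the precise boundary location of each chord created when two adjacent legs are fused by \eqref{y}, so that after the 4T relation the six chord diagrams on the right of \eqref{cross} appear with exactly the stated coefficients. As a cross-check, and as an alternative that avoids the IHX step, one can instead cut $S^1$ and compute $w(\Gamma)$ directly from the construction of \S\ref{SEC:UWSL}, assigning the structure constants $\f{ij}{k}$ of $A1_{\epsilon}$ to the two vertices and contracting with $C^{ij}=B_{ij}$; expanding the resulting central element with $\{Q_1,Q_2\}=Q_3$ and its cyclic companions together with the Casimir \eqref{CasA1e} should reproduce \eqref{cross} and confirm the coefficients.
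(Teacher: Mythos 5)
Your IHX skeleton is sound: the relation $ff_{12}=ff_{23}-ff_{23}S_{12}$ (equivalently, IHX for the internal edge, which holds for any weight system built from the STU construction) does give $w$ of the $\{A,C\}\,|\,\{B,D\}$ diagram as the difference of the weights of the $\{A,B\}\,|\,\{C,D\}$ and $\{A,D\}\,|\,\{B,C\}$ diagrams, and one can check that substituting Proposition \ref{PROP:geta} for the latter and its $90^\circ$-rotated version for the former reproduces \eqref{cross} exactly. Two points in your plan, however, are genuinely wrong. First, the IHX relation among the weights carries no factors of $\epsilon$: the crossing $S$ contributes the index-dependent phase $(-1)^{\hat{\imath}\cdot\hat{\jmath}}$, which is already part of the evaluation of the diagram itself, not an overall $\epsilon$; the $\epsilon$'s in \eqref{cross} come entirely from the structure constants via \eqref{ftilddef}--\eqref{ftildrel}, i.e. from Proposition \ref{PROP:geta}, not from the leg interchange. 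Second, and more seriously, your evaluation of the $\{A,B\}\,|\,\{C,D\}$ piece by two applications of Proposition \ref{PROP:Y} fails in the generality required. Proposition \ref{PROP:Y} applies only when the two forked legs land on \emph{adjacent} points of $S^1$ (its proof needs $Q_\mu Q_\nu$ to sit consecutively in the word so that the contraction produces the anticommutator $\epsilon\, Q_\rho$). In \eqref{cross} the four marked points are separated by dotted arcs that may carry arbitrarily many other chord endpoints, so after IHX the legs $A,B$ (and $C,D$) of the parallel diagram are \emph{not} adjacent in general, and neither is the pair $A,D$ in the geta diagram — your claim that Proposition \ref{PROP:geta} itself is "obtained by applying \eqref{y} once at each vertex" is incorrect for the same reason. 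The correct completion of your route is to apply Proposition \ref{PROP:geta} (which is proved for non-adjacent legs, with black boxes inserted between consecutive endpoints) to \emph{both} pairings produced by IHX.

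The paper does not take the IHX route at all: it proves \eqref{cross} exactly as it proves \eqref{Geta}, by cutting the circle, assigning $\tf{\mu\lambda}{\nu}$ to the two trivalent vertices, contracting with $C$, and using the quadratic identity \eqref{ftildrel} to rewrite the double sum as the combination of two-chord and one-chord words listed on the right-hand side. This is precisely your fallback "cross-check", and it is the argument that actually works without further repair; if you prefer the IHX route, replace the double use of Proposition \ref{PROP:Y} by the rotated Proposition \ref{PROP:geta} and drop the spurious $\epsilon$'s from the IHX step.
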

Proposition \ref{PROP:Y} and \ref{PROP:geta} also show the hybrid nature of $A1_{\epsilon}$ weight systems. The relation \eqref{y} for $sl(2)$ (resp. $gl(1|1)$) has only the first (resp. second) term. The relation \eqref{Geta} for $ sl(2) $ (resp. $gl(1|1)$) has only the terms with two (resp. one) chords. 
These propositions and Theorem \ref{THM:recrel} are proved in the next section, where we will see that Proposition \ref{PROP:geta} and \ref{PROP:cross} play a crucial role. 
In the rest of this section, some results from these relations are given. 

\begin{enumerate}
	\item The formula of $w(D_n)$ for the following chord diagram:
	\begin{equation}
		D_n = 
	\begin{tikzpicture}[baseline=-3pt, scale=0.8]
		\draw[very thick] (0,0) circle (1);
		\foreach \t in {20,40,220}{
			\draw[thick] (\t:1) -- (180-\t:1);
			\fill (\t:1) circle (1.8pt);
			\fill (180-\t:1) circle (1.8pt);
		}
		\draw[thick] (90:1) coordinate(A) -- (270:1) coordinate(B);
		\foreach \l in {A,B}
			\fill (\l) circle (1.8pt);
		\node at (.5,0){$\vdots$};
		\node at (1.4,0){$\Biggr\} n $};
	\end{tikzpicture}
	\qquad 
	w(D_n)=c^{n+1} - \big( c^n-(c-\epsilon)^n \big) y.
	\end{equation}
	This is proved by induction on the number of chords using Proposition \ref{PROP:Y}. 
\item The formula of the $A1_{\epsilon}$ universal weight system for the following Jacobi diagrams:
\begin{equation}
	\begin{tikzpicture}[baseline=0,scale=.8]
		\tikzset{
			position label/.style={
				above = 1pt,
				text height = 1.5ex,
				text depth = 1ex
			},
			brace/.style={
				decoration={brace},
				decorate
			}
		}
		\draw[very thick] circle [radius=1];
		\foreach \t/\l in {0/A,45/B,100/C,135/D,180/E}
			\fill (\t:1) coordinate (\l) circle(1.8pt);
		\draw[thick] (A)--(E);
		\foreach \l in {B,C,D}
			\draw[thick] (\l) -- ($(A)!(\l)!(E)$);
		\node[left, below,xshift=-8pt] at (B) {$\cdots$};
		\draw[brace,thick] ($(D)+(-.1,0.4)$) -- ($(B)+(.1,0.4)$) node [position label, pos=0.5] {$n$};
		\node at ($(E)+(-.9,0)$) {$D_n = $};
		\node at (0,-2) {$w(D_n) = \epsilon^n y$};
		\begin{scope}[xshift=6.5cm]
			\draw[very thick] circle [radius=1];
			\foreach \t/\l in {0/A,65/B,110/C,135/D,180/E,30/P1,-30/P2}
			\fill (\t:1) coordinate (\l) circle(1.8pt);
			\draw[thick] (A)--(E);
			\foreach \l in {B,C,D}
			\draw[thick] (\l) -- ($(A)!(\l)!(E)$);
			\node[left, below,xshift=-8pt,yshift=-3pt] at (B) {$\cdots$};
			\draw[brace,thick] ($(D)+(-.1,0.4)$) -- ($(B)+(.1,0.25)$) node [position label, pos=0.5] {$n$};
			\node at ($(E)+(-.9,0)$) {$D'_n = $};
			\draw[thick] (P1) to[bend right] (P2);
			\node at (0,-2) {$w(D'_n) = \epsilon^n (c-\epsilon) y$};
		\end{scope}
	\end{tikzpicture} 
	\label{teeth}
\end{equation}
These immediately follow from the recurrence relations
\begin{align}
	w(D_n) &= c w(D_{n-1}) - w(D'_{n-1}), \label{hige1}
	\\
	w(D'_n) &= (\epsilon-c) w(D'_{n-1}) - c(\epsilon-c) w(D_{n-1}). \label{hige2}
\end{align}
\eqref{hige1} is verified by applying the STU relation to  the right vertical edge.   
Repeating the same and then applying Proposition \ref{PROP:Y}, one may prove \eqref{hige2}.
\item Some more examples of Jacobi diagrams:
\begin{center}
	\begin{tikzpicture}[baseline=0,scale=.8]
		\draw [very thick] (0,0) circle [radius=1];
		\draw[thick] (0,0) circle [radius=0.4];
		\foreach \t in {30, 150, 270}{
			\fill (\t:1)  circle (1.8pt);
			\draw[thick] (\t:1) -- (\t:0.4);
		}
		\node at(0,-1.8) {$y$};	
	\begin{scope}[xshift=4.5cm]
			\draw [very thick] (0,0) circle [radius=1];
			\draw[thick] (0,0) circle [radius=0.4];
			\foreach \t in {45,135,225,315}{
				\fill (\t:1)  circle (1.8pt);
				\draw[thick] (\t:1) -- (\t:0.4);
			}	
			\node at(0,-1.8) {$2y^2$};	
	\end{scope}
	\begin{scope}[xshift=9cm]
		\draw [very thick] (0,0) circle [radius=1];
		\draw[thick] (0,0) circle [radius=0.4];
		\foreach \t in {18,90,162,234,306}{
			\fill (\t:1)  circle (1.8pt);
			\draw[thick] (\t:1) -- (\t:0.4);
		}	
		\node at(0,-1.8) {$3\epsilon y^2-y$};	
	\end{scope}
	\end{tikzpicture}
\end{center}
We illustrate the calculation of the diagram in the middle. By the STU relation
\[
	\begin{tikzpicture}[baseline=-3pt,scale=.8]
		\draw [very thick] (0,0) circle [radius=1];
		\draw[thick] (0,0) circle [radius=0.4];
		\foreach \t in {45,135,225,315}{
			\fill (\t:1)  circle (1.8pt);
			\draw[thick] (\t:1) -- (\t:0.4);
		}	
	\end{tikzpicture} 
	\ = \  
	\begin{tikzpicture}[baseline=-3pt,scale=.8]
		\draw [very thick] (0,0) circle [radius=1];
		\foreach \t/\l in {0/A,55/B,90/C,125/D,180/E}{
			\fill (\t:1) coordinate(\l) circle (1.8pt);
			\draw[thick] (A) -- (180:1); 
		}
		\foreach \l in {B,C,D}
			\draw[thick] (\l) -- ($(A)!(\l)!(E)$);
	\end{tikzpicture}
	\ - \ 
	\begin{tikzpicture}[baseline=-3pt,scale=.8]
		\draw [very thick] (0,0) circle [radius=1];
		\foreach \t/\l in {18/A1,90/A2,162/A3,234/A4,306/A5}{
			\fill (\t:1) coordinate(\l) circle (1.8pt);
		}	
		\foreach \t/\la in {18/B1,90/B2,162/B3}
			\draw[thick] (\t:1) -- (\t:0.4) coordinate(\la);
		\draw[thick] (A4) to[bend right=35] (B1) to[bend right=35] (B2) to[bend right=35] (B3) to[bend right=35] (A5);
	\end{tikzpicture}
\]
Once more by the STU relation, we get
\[
	\begin{tikzpicture}[baseline=-3pt,scale=.8]
		\draw [very thick] (0,0) circle [radius=1];
		\foreach \t/\l in {18/A1,90/A2,162/A3,234/A4,306/A5}{
			\fill (\t:1) coordinate(\l) circle (1.8pt);
		}	
		\foreach \t/\la in {18/B1,90/B2,162/B3}
		\draw[thick] (\t:1) -- (\t:0.4) coordinate(\la);
		\draw[thick] (A4) to[bend right=35] (B1) to[bend right=35] (B2) to[bend right=35] (B3) to[bend right=35] (A5);
	\end{tikzpicture}
	\ = \ 
	\begin{tikzpicture}[baseline=-3pt,scale=.8]
		\draw[very thick] circle [radius=1];
		\foreach \t/\l in {0/A,80/B,120/C,180/E,40/P1,-40/P2}
		\fill (\t:1) coordinate (\l) circle(1.8pt);
		\draw[thick] (A)--(E);
		\foreach \l in {B,C}
		\draw[thick] (\l) -- ($(A)!(\l)!(E)$);
		\draw[thick] (P1) to[bend right] (P2);
	\end{tikzpicture}
	\ - \ 
	\begin{tikzpicture}[baseline=-3pt,scale=.8]
		\draw[very thick] (0,0) circle [radius=1];
		\foreach \t/\l in {45/A,135/B,225/C,315/D,10/P1,170/P2}{
			\fill (\t:1) circle (1.8pt) coordinate (\l);
		}
		\coordinate (v1) at (-0.4,0); \coordinate (v2) at (0.4,0);
		\draw[thick] (v1)--(v2);
		\draw[thick] (A)--(v2); \draw[thick] (D)--(v2);
		\draw[thick] (B)--(v1); \draw[thick] (C)--(v1);
		\draw[thick] (P1) to[bend right] (P2);
	\end{tikzpicture}
\]
We obtain by Proposition \ref{PROP:geta} that 
\[
 w\left(  
 	\begin{tikzpicture}[baseline=-3pt,scale=.8]
 		\draw[very thick] (0,0) circle [radius=1];
 		\foreach \t/\l in {45/A,135/B,225/C,315/D,10/P1,170/P2}{
 			\fill (\t:1) circle (1.8pt) coordinate (\l);
 		}
 		\coordinate (v1) at (-0.4,0); \coordinate (v2) at (0.4,0);
 		\draw[thick] (v1)--(v2);
 		\draw[thick] (A)--(v2); \draw[thick] (D)--(v2);
 		\draw[thick] (B)--(v1); \draw[thick] (C)--(v1);
 		\draw[thick] (P1) to[bend right] (P2);
 	\end{tikzpicture}
 \right) 
 = cy - 2\epsilon y + 2 y^2.
\]
These, together with \eqref{teeth}, gives enough materials to calculate the value of the middle diagram. 
\item The values of $w(D)$ for \textit{indecomposable} chord diagrams of order four. 
The diagram is indecomposable if it is not a connected sum of two diagrams. 
\begin{align}
	\begin{tikzpicture}[baseline,scale=.8]
		\draw[very thick] circle [radius=1];
		\foreach \t/\l in {0/P1,45/P2,90/P3,135/P4,180/P5,225/P6,270/P7,315/P8}
		\fill (\t:1) coordinate(\l) circle (1.8pt);
		\draw[thick] (P2) to[bend right] (P8);
		\draw[thick] (P4) to[bend left] (P6);
		\draw[thick] (P1) -- (P5);
		\draw[thick] (P3) -- (P7);
	\end{tikzpicture}
	& \qquad c^4 -3c^2 \epsilon y +3cy - \epsilon y
	\\[5pt]
	\begin{tikzpicture}[baseline,scale=.8]
		\draw[very thick] circle [radius=1];
		\foreach \t/\l in {50/P1,110/P2,130/P3,160/P4,180/P5,210/P6,230/P7,280/P8}
		  \fill (\t:1) coordinate(\l) circle (1.8pt);
		  \draw[thick] (P1) to[bend left] (P3);
		  \draw[thick] (P2) to[bend left] (P5);
		  \draw[thick] (P4) to[bend left] (P7);
		  \draw[thick] (P6) to[bend left] (P8);
	\end{tikzpicture}
	& \qquad c^4 -3c^2 \epsilon y + 2cy - \epsilon y + y^2
	\\[5pt]
	\begin{tikzpicture}[baseline,scale=.8]
		\draw[very thick] circle [radius=1];
		\foreach \t/\l in {350/P1,45/P2,90/P3,135/P4,190/P5,210/P6,270/P7,330/P8}
		\fill (\t:1) coordinate(\l) circle (1.8pt);
		\draw[thick] (P6) -- (P8);
		\draw[thick] (P3) -- (P7);
		\draw[thick] (P2) -- (P5);
		\draw[thick] (P1) -- (P4);
	\end{tikzpicture}
	& \qquad c^4 -4c^2 \epsilon y + 4cy - 2\epsilon y + y^2
	\\[5pt]
	\begin{tikzpicture}[baseline,scale=.8]
		\draw[very thick] circle [radius=1];
		\foreach \t/\l in {30/P1,70/P2,110/P3,150/P4,-150/P5,-110/P6,-70/P7,-30/P8}
		\fill (\t:1) coordinate(\l) circle (1.8pt);
		\draw[thick] (P1) -- (P4);
		\draw[thick] (P2) -- (P7);
		\draw[thick] (P3) -- (P6);
		\draw[thick] (P5) -- (P8);
	\end{tikzpicture}
	& \qquad c^4 -4c^2 \epsilon y + 4cy - 4\epsilon y + 4y^2
	\\[5pt]
	\begin{tikzpicture}[baseline,scale=.8]
		\draw[very thick] circle [radius=1];
		\foreach \t/\l in {30/P1,70/P2,110/P3,150/P4,-150/P5,-110/P6,-70/P7,-30/P8}
		\fill (\t:1) coordinate(\l) circle (1.8pt);
		\draw[thick] (P2) -- (P7);
		\draw[thick] (P3) -- (P6);
		\draw[thick] (P1) -- (P5);
		\draw[thick] (P4) -- (P8);
	\end{tikzpicture}
	& \qquad c^4-5c^2 \epsilon y + 6cy - 5 \epsilon y + 4y^2
	\\[5pt]
	\begin{tikzpicture}[baseline,scale=.8]
		\draw[very thick] circle [radius=1];
		\foreach \t/\l in {0/P1,45/P2,90/P3,135/P4,180/P5,225/P6,270/P7,315/P8}
		\fill (\t:1) coordinate(\l) circle (1.8pt);
		\draw[thick] (P1) -- (P5);
		\draw[thick] (P2) -- (P6);
		\draw[thick] (P3) -- (P7);
		\draw[thick] (P4) -- (P8);
	\end{tikzpicture}
		& \qquad c^4 -6c^2 \epsilon y + 8cy - 7\epsilon y + 5y^2
\end{align}
\end{enumerate}

\section{Proofs} \label{SEC:Proof}
\setcounter{equation}{0}

We prepare some notations. We denote the basis and a structure constant of $ A1_{\epsilon} $ by $ Q_{\mu} $ and $ \f{\mu\nu}{\rho}, $ respectively, where $ \mu = 0, 1, 2, 3 $ (greek letters run from $0$ to $3$ and we reserve latin letters for $ 1, 2, 3$) and $ Q_0 := H. $ 
The $\Z2$ grading of $ Q_{\mu} $ is denoted by $ \hat{\mu} \in \Z2$ and $ \hat{\mu} \cdot \hat{\nu} $ is the inner product which defines the commuting factor \eqref{comfac} of $\Z2$-graded Lie algebra. 
Recalling that the invariant bilinear form of $A1_{\epsilon}$ is diagonal
\begin{equation}
	C = B^{-1} = \mathrm{diag}(1,\epsilon, 1, 1) \equiv \mathrm{diag}(b_0,b_1,b_2,b_3)
\end{equation} 
and noting that $ \f{\mu\nu}{\rho} = 0 $ if one of $\mu, \nu, \rho $ is zero, 
we define
\begin{equation}
	\tf{\mu\nu}{\rho} := \sum_{\alpha} \f{\mu\nu}{\alpha} C^{\alpha \rho} 
	= \f{\mu\nu}{\rho} b_{\rho} = 
	\begin{cases}
		1, & (\mu,\nu,\rho) = \text{cyclic perm. of } (1,2,3)
		\\[5pt]
		0, & \text{otherwise}
	\end{cases}
	\label{ftilddef}
\end{equation}
Namely, $ \tf{ij}{k} $ is cyclic and symmetric with respect to $(i, j, k) $ and $ (i, j), $ respectively. 
It is not difficult to see that 
\begin{align}
	\sum_{a} \tf{ia}{j} \tf{ak}{\ell} = \delta_{ik} \delta_{j\ell} - \ph{k}{\ell} \delta_{i\ell} \delta_{jk} 
	=\delta_{i\ell} \delta_{jk} - \ph{i}{j} \delta_{ik} \delta_{j\ell}. 
	\label{ftildrel}
\end{align}
Let us recall that for a $\Z2$-graded Lie algebra, the mapping $S$ \eqref{fScomp} is given by 
\begin{equation}
	S_{ij}^{k\ell} = \ph{i}{j} \delta_i^{\ell} \delta_j^k
	\quad \longleftrightarrow \quad
	 \begin{tikzpicture}[baseline=13pt]
	 	\draw[thick] (0,0) node[below] {$i$} -- (1,1) node[above] {$i$};
	 	\draw[thick] (0,1) node[above] {$j$} -- (1,0) node[below] {$j$};
	 	\node at (1.8,0.5) {$\ph{i}{j}$};
	 \end{tikzpicture} 
	 \label{SforZ22}
\end{equation}
i.e., the crossing has the phase factor $\ph{i}{j}$. 

Throughout this section, we write $w(D)$ as $D$, i.e., we omit $w$ and a diagram implies its weight. 

\subsection{Proof of Proposition \ref{PROP:Y}}

Draw the diagram as follows (left), then cut and open it at the position of the arrow which gives the diagram on the right:
\begin{center}
\begin{tikzpicture}[baseline=0,scale=0.8]
	\draw[very thick] (0,0) circle (1);
	\draw (50:1) coordinate(A) -- (40:0.7);
	\draw (130:1) coordinate(B)-- (140:0.7);
	\node at (90:0.76) {$\cdots\cdot$};
	\fill[pattern=north east lines] (0,0.2) ellipse (.8 and .4);
	\draw[thick] (0,1) coordinate(E)--(0,-0.6) coordinate(v);
	\draw[thick] (v)--(-70:1) coordinate(C);
	\draw[thick] (v)--(-110:1) coordinate(D);
	\foreach \l in {A,B,C,D,E}
		\fill (\l) circle (1.8pt);
	\draw[thick,->] (330:1) arc (330:340:1);
	\node at (3,0) {$\longmapsto$};
	\node at (-1.8,0) {$D_L = $};
	\begin{scope}[xshift=5cm]
		\draw[very thick, ->] (-0.2,-1) -- (6,-1);
		\fill[pattern=north east lines] (1,-0.1) ellipse (1 and .5);
		\draw[thick] (0.2,-1) coordinate(A) -- (0.3,-0.4);
		\draw[thick] (1.8,-1) coordinate(B) -- (1.7,-0.4);
		\node[yshift=5pt] at ($(A)!0.5!(B)$) {$\cdots\cdot$};
		\draw[thick] (1,-1) coordinate(E)  parabola[bend at end] (2.1,0.8) node[left,xshift=-6pt,yshift=-3pt] {\footnotesize $\alpha$} parabola (2.8,0)  coordinate (v) parabola[bend at end] (3.5,0.8) parabola (4.5,-1) coordinate(C);
		\draw[thick] (v)--($(v)+(0,-1)$) coordinate(D) ;
		\node[left,xshift=-2pt,yshift=4pt] at (v) {\footnotesize $\rho$};
		\node[right,xshift=2pt,yshift=4pt] at (v) {\footnotesize $\sigma$};
		\foreach \l in {A,B,C,D,E}
			\fill (\l) circle (1.8pt);
		\node[below] at (C) {\footnotesize $Q_{\nu}$};
		\node[below] at (D)  {\footnotesize $Q_{\mu}$};
	\end{scope}
\end{tikzpicture}
\end{center}
where the shaded region indicates all other edges. 
Following the construction given in \S \ref{SEC:UWSL}, we get
\begin{align}
	D_L &= 
	\sum A_{\alpha} C^{\alpha\rho} C^{\sigma \nu} \f{\rho\sigma}{\mu} Q_{\mu} Q_{\nu}
	=
	\sum A_i b_i b_j \f{ij}{k} Q_{k} Q_j
	\notag \\
	& = A_1 Q_1 + \epsilon (A_2 Q_2 + A_3 Q_3)
\end{align}
where $ A_a \in \mathcal{U}(A1_{\epsilon}) $ is the contribution from the shaded part and the explicit expression of $f$ was used. 

Similarly, one gets for the diagram
\begin{center}
	\begin{tikzpicture}[baseline=0,scale=0.8]
		\draw[very thick] (0,0) circle (1);
		\draw (50:1) coordinate(A) -- (40:0.7);
		\draw (130:1) coordinate(B)-- (140:0.7);
		\node at (90:0.76) {$\cdots\cdot$};
		\fill[pattern=north east lines] (0,0.2) ellipse (.8 and .4);
		\draw[thick] (0,1) coordinate(C)--(0,-1) coordinate(D);
		\foreach \l in {A,B,C,D}
			\fill (\l) circle (1.8pt);
		\draw[thick,->] (330:1) arc (330:340:1);
		\node at (3,0) {$\longmapsto$};
		\node at (-1.8,0) {$D_R = $};
		\begin{scope}[xshift=5cm]
			\draw[very thick, ->] (-0.2,-1) -- (6,-1);
			\fill[pattern=north east lines] (1,-0.1) ellipse (1 and .5);
			\draw[thick] (0.2,-1) coordinate(A) -- (0.3,-0.4);
			\draw[thick] (1.8,-1) coordinate(B) -- (1.7,-0.4);
			\node[yshift=5pt] at ($(A)!0.5!(B)$) {$\cdots\cdot$};
			\draw[thick] (1,-1) coordinate(C)  parabola[bend at end] (2.1,0.8) node[left,xshift=-6pt,yshift=-3pt] {\footnotesize $\alpha$} parabola (3.2,-1) coordinate(D);
			\node[below] at (D) {\footnotesize $Q_{\mu}$}; 
			\foreach \l in {A,B,C,D}
				\fill (\l) circle (1.8pt);
		\end{scope}
	\end{tikzpicture}
\end{center}
\begin{equation}
	D_R = \sum A_{\alpha} C^{\alpha \mu} Q_{\mu} = A_0 Q_0 + \epsilon A_1 Q_1 + A_2Q_2 + A_3 Q_3 
	= A_0 Q_0 + \epsilon D_L.
\end{equation}
Since $Q_0$ is the center of $ A1_{\epsilon}$, we have
\begin{center}
	\begin{tikzpicture}[baseline,scale=0.8]
		\draw[very thick, ->] (-0.2,-1) -- (4.5,-1);
		\fill[pattern=north east lines] (1,-0.1) ellipse (1 and .5);
		\draw[thick] (0.2,-1) coordinate(A) -- (0.3,-0.4);
		\draw[thick] (1.8,-1) coordinate(B) -- (1.7,-0.4);
		\node[yshift=5pt] at ($(A)!0.5!(B)$) {$\cdots\cdot$};
		\draw[thick] (1,-1) coordinate(C) parabola[bend at end] (2.1,0.8) node[left,xshift=-6pt,yshift=-3pt] {\footnotesize $0$} parabola (3.2,-1) coordinate(D);
		\node[below] at (C) {\footnotesize $Q_0$};
		\node[below] at (D) {\footnotesize $Q_{0}$};
		\foreach \l in {A,B,C,D}
			\fill (\l) circle (1.8pt);
		\node at (5.2,-0.3) {$=$};
		\node at (-1.5,-0.3) {$A_0 Q_0 = $};
		\begin{scope}[xshift=6.2cm]
			\draw[very thick, ->] (-0.2,-1) -- (5,-1);
			\fill[pattern=north east lines] (1,-0.1) ellipse (1 and .5);
			\draw[thick] (0.2,-1) coordinate(A) -- (0.3,-0.4);
			\draw[thick] (1.8,-1) coordinate(B) -- (1.7,-0.4);
			\node[yshift=5pt] at ($(A)!0.5!(B)$) {$\cdots\cdot$};
			\draw[thick] (2.4,-1) coordinate(C) parabola[bend at end] (3.3,0.8)  parabola (4.2,-1) coordinate(D); 
			\node[below] at (C) {\footnotesize $Q_0$};
			\node[below] at (D) {\footnotesize $Q_0$};
			\foreach \l in {A,B,C,D}
				\fill (\l) circle (1.8pt);
			\node at (5.5,-0.3) {$=$};
		\end{scope}
		\begin{scope}[xshift=13.5cm,yshift=-7]
			\draw[very thick] (0,0) circle (1);
			\draw (50:1) coordinate(A) -- (40:0.7);
			\draw (130:1) coordinate(B)-- (140:0.7);
			\node at (90:0.76) {$\cdots\cdot$};
			\fill[pattern=north east lines] (0,0.2) ellipse (.8 and .4);
			\foreach \l in {A,B}
				\fill (\l) circle (1.8pt);
			\draw[thick,->] (330:1) arc (330:340:1);
			\node[right] at (1,0) {$\cdot Q_0^2$};
		\end{scope}
	\end{tikzpicture}
\end{center}
Recall $Q_0^2 = c-y$, this proves the proposition.

\subsection{Proof of Proposition \ref{PROP:geta} and Proposition \ref{PROP:cross}}

The weight of the LHS of \eqref{Geta} is evaluated as follows:
\begin{center}
	\begin{tikzpicture}[baseline=0,scale=.8]
		\draw[dotted,thick] (0,0) circle [radius=1];
		\foreach \t/\l in {45/A,135/B,225/C,315/D}{
			\fill (\t:1) circle (1.8pt) coordinate (\l);
			\draw[very thick] (\t-15:1) arc(\t-15:\t+15:1);
		}
		\coordinate (v1) at (-0.4,0); \coordinate (v2) at (0.4,0);
		\draw[thick] (v1)--(v2);
		\draw[thick] (A)--(v2); \draw[thick] (D)--(v2);
		\draw[thick] (B)--(v1); \draw[thick] (C)--(v1);
		\draw[very thick,->] (85:1) arc (85:95:1);
		\node at (2.6,0) {$\longmapsto$};
		\node at (-1.8,0) {$D_L = $};
		\begin{scope}[xshift=5.2cm]
			\draw[very thick, ->] (-1,-1) -- (5,-1);
			\foreach \t/\l in {0/A,1.2/B,2.4/C,3.6/D}{
				\fill (\t,-1) circle (1.8pt) coordinate(\l);
			}
			\draw[thick,rounded corners] (A) -- ($(A)+(0,1.4)$) node[left] {$\mu$} to[bend left] ($(A)!0.5!(B)+(0,2.4)$) to[bend left] ($(B)+(0,1.4)$) -- (B);
			\draw[thick,rounded corners] (B) -- ($(B)+(0,1.4)$) to[bend left] ($(B)!0.5!(C)+(0,2.4)$) node[above] {$\lambda$} to[bend left] ($(C)+(0,1.4)$) -- (C);
			\draw[thick,rounded corners] (C) -- ($(C)+(0,1.4)$) to[bend left] ($(C)!0.5!(D)+(0,2.4)$)  to[bend left] ($(D)+(0,1.4)$) node[right] {$\sigma$} -- (D);
			\fill[pattern=north east lines] ($(A)!0.5!(D)+(0,.8)$) ellipse (2.5 and .4);
			\draw (-0.5,-1)--(-0.3,-0.4);  \draw (4.1,-1)--(3.9,-0.4);
			\foreach \m/\l in {\mu/A,\nu/B,\rho/C,\sigma/D}
			\node[below] at (\l) {\footnotesize $Q_{\m}$};
		\end{scope}
	\end{tikzpicture}
\end{center}

\begin{align}
	D_L &= \sum b_{\mu} b_{\lambda} b_{\sigma} \f{\mu\lambda}{\nu} \f{\lambda\sigma}{\rho} 
		A Q_{\mu} T Q_{\nu} U Q_{\rho} V Q_{\sigma} W
		\notag \\
		&= \sum b_i b_a b_j b_k b_{\ell} \tf{ia}{j} \tf{a\ell}{k} 
		A Q_i T Q_j U Q_k V Q_{\ell} W
\end{align}
where $A, T, U, V, W \in \mathcal{U}(A1_{\epsilon}) $ are the contributions from the shaded region. 

\medskip\noindent
\textbf{Remarks:}
\begin{enumerate}
	\item The labels of the edges before and after through the shaded region are the same, because of \eqref{SforZ22}.
	\item Due to the phase factor in \eqref{SforZ22}, $A, T, U, V, W $ depend on the labels of the edges through the shaded region, but we do not write it explicitly. 
\end{enumerate}
Noting \eqref{ftilddef}, one may readily see that $ b_i b_a b_j \tf{ia}{j} = \epsilon \tf{ia}{j}. $ Using \eqref{ftildrel}, we may finally obtain the following expression for $D_L$:
\begin{align}
	D_L = \sum \epsilon b_i b_j \big( A Q_i T Q_j U Q_j V Q_i W 
	  - \ph{i}{j} A Q_i T Q_j U Q_i V Q_j W \big).
\end{align}

We now turn to the evaluation of the RHS of \eqref{Geta}. 
First, let us consider the diagrams with two chords:
\begin{center}
	\begin{tikzpicture}[baseline=0,scale=.8]
		\draw[dotted,thick] (0,0) circle [radius=1];
		\foreach \t/\l in {45/A,135/B,225/C,315/D}{
			\fill (\t:1) circle (1.8pt) coordinate (\l);
			\draw[very thick] (\t-15:1) arc(\t-15:\t+15:1);
		}
		\draw[thick] (A)to[bend left] (B); \draw[thick] (C) to[bend left] (D);
		\draw[very thick,->] (85:1) arc (85:95:1);
		\node at (2.6,0) {$\longmapsto$};
		\node at (-2,0) {$D_{R1} = $};
		\begin{scope}[xshift=5.3cm]
			\draw[very thick, ->] (-1,-1) -- (5,-1);
			\foreach \t/\l in {0/A,1.2/B,2.4/C,3.6/D}{
				\coordinate (\l) at (\t,.4);
				\fill (\t,-1) circle (1.8pt) coordinate(\l-1);
			}
			\draw[thick,rounded corners] (A-1) -- (0,0.4) to[bend left] (1.2,1.5) to (2.4,1.5) to[bend left] (3.6,0.4) -- (D-1);
			\draw[thick,rounded corners] (B-1) -- (1.2,0.4) to[bend left] ($(B)!0.5!(C)+(0,0.7)$) to[bend left] (2.4,0.4) -- (C-1);
			\fill[pattern=north east lines] ($(A)!0.5!(D)+(0,-0.6)$) ellipse (2.5 and .4);
			\draw (-0.5,-1)--(-0.3,-0.4);  \draw (4.1,-1)--(3.9,-0.4);
			\foreach \m/\l in {\mu/A,\nu/B,\nu/C,\mu/D}
			\node[below] at (\l-1) {\footnotesize $Q_{\m}$};
		\end{scope}
	\end{tikzpicture}
\end{center}
\begin{align}
	D_{R1} &= \sum b_{\mu} b_{\nu} A Q_{\mu} T Q_{\nu} U Q_{\nu} V Q_{\mu} W
	\notag \\
	&= Q_0^4 ATUVW + \sum b_i Q_0^2 (AT Q_i U Q_i VW + A Q_i TUV Q_i W) 
	\notag \\
	&+ \sum b_i b_j A Q_i T Q_j U Q_j V Q_i W.
\end{align}
Each $A, T, U, V $  and $W$ in the second and third lines is not the same, they differ by the phase factor coming from \eqref{SforZ22}. We use the same symbols for all of them, but this does not harm the proof. 
Similarly, we have
\begin{center}	
	\begin{tikzpicture}[baseline=0,scale=.8]
		\draw[dotted,thick] (0,0) circle [radius=1];
		\foreach \t/\l in {45/A,135/B,225/C,315/D}{
			\fill (\t:1) circle (1.8pt) coordinate (\l);
			\draw[very thick] (\t-15:1) arc(\t-15:\t+15:1);
		}
		\draw[thick] (A)--(C); \draw[thick] (B)--(D);
		\draw[very thick,->] (85:1) arc (85:95:1);
		\node at (2.6,0) {$\longmapsto$};
		\node at (-2,0) {$D_{R2} = $};
		\begin{scope}[xshift=5.3cm]
			\draw[very thick, ->] (-1,-1) -- (5,-1);
			\foreach \t/\l in {0/A,1.2/B,2.4/C,3.6/D}{
				\coordinate (\l) at (\t,.4);
				\fill (\t,-1) circle (1.8pt) coordinate(\l-1);
			}
			\draw[thick,rounded corners] (A-1) -- (0,0.4) to[bend left] (1.2,1.5) to[bend left] (2.4,0.4) --  (C-1);
			\draw[thick,rounded corners] (B-1) -- (1.2,0.4) to[bend left] (2.4,1.5) to[bend left] (3.6,0.4) -- (D-1);
			\fill[pattern=north east lines] ($(A)!0.5!(D)+(0,-0.6)$) ellipse (2.5 and .4);
			\draw (-0.5,-1)--(-0.3,-0.4);  \draw (4.1,-1)--(3.9,-0.4);
			\foreach \m/\l in {\mu/A,\nu/B,\mu/C,\nu/D}
			\node[below] at (\l-1) {\footnotesize $Q_{\m}$};
		\end{scope}
	\end{tikzpicture}
\end{center}
\begin{align}
	D_{R2} &= \sum \ph{\mu}{\nu} b_{\mu} b_{\nu} A Q_{\mu} T Q_{\nu} U Q_{\mu} V Q_{\nu} W
	\notag \\
	&= Q_0^4 ATUVW + \sum b_i Q_0^2 (AT Q_i UV Q_i W + A Q_i TU Q_i VW) 
	\notag \\
	&+ \sum \ph{i}{j} b_i b_j A Q_i T Q_j U Q_i V Q_j W.
\end{align}
It follows that
\begin{align}
	D_{R1} - D_{R2} = \epsilon D_L 
	&+ \sum b_i Q_0^2 (AT Q_i U Q_i VW + A Q_i TUV Q_i W) 
	\notag \\[3pt]
	&-\sum b_i Q_0^2 (AT Q_i UV Q_i W + A Q_i TU Q_i VW). 
	\label{DR1mDR2}
\end{align}

Next, we consider the diagrams with one chord. 
\begin{center}
	\begin{tikzpicture}[baseline=0,scale=.8]
		\draw[dotted,thick] (0,0) circle [radius=1];
		\foreach \t/\l in {45/A,135/B,225/C,315/D}{
			\coordinate (\l) at (\t:1);
			\draw[very thick] (\t-15:1) arc(\t-15:\t+15:1);
		}
		\draw[thick] (C)--(A); 
		\fill (A) circle (1.8pt); \fill (C) circle (1.8pt);
		\draw[very thick,->] (85:1) arc (85:95:1);
		\node at (2.6,0) {$\longmapsto$};
		\node at (-2,0) {$D_{R3} = $};
		\begin{scope}[xshift=5.3cm]
			\draw[very thick, ->] (-1,-1) -- (5,-1);
			\foreach \t/\l in {0/A,1.2/B,2.4/C,3.6/D}{
				\coordinate (\l) at (\t,.4);
				\fill (\t,-1) circle (1.8pt) coordinate(\l-1);
			}
	\draw[thick,rounded corners] (B-1) -- (1.2,0.4) to[bend left] (2.4,1.5) to[bend left] (3.6,0.4) -- (D-1);
			\fill[pattern=north east lines] ($(A)!0.5!(D)+(0,-0.6)$) ellipse (2.5 and .4);
			\draw (-0.5,-1)--(-0.3,-0.4);  \draw (4.1,-1)--(3.9,-0.4);
			\node[below] at (B-1) {\footnotesize $Q_{\mu}$};
			\node[below] at (D-1) {\footnotesize $Q_{\mu}$};
		\end{scope}
	\end{tikzpicture}
\end{center}
\begin{align}
	D_{R3} = \sum b_{\mu} AT Q_{\mu} UV Q_{\mu} W
	= Q_0^2 ATUVW + \sum b_i AT Q_i UV Q_i W. 
\end{align}
In a similar way, we get the following:
\begin{align*}
	\begin{tikzpicture}[baseline=0,scale=.8]
		\draw[dotted,thick] (0,0) circle [radius=1];
		\foreach \t/\l in {45/A,135/B,225/C,315/D}{
			\coordinate (\l) at (\t:1);
			\draw[very thick] (\t-15:1) arc(\t-15:\t+15:1);
		}
		\draw[thick] (B)--(D); 
		\fill (B) circle (1.8pt); \fill (D) circle (1.8pt);
		\node at (-2,0) {$D_{R4} = $};
	\end{tikzpicture}
	\ &= Q_0^2 ATUVW + \sum b_i A Q_i TU Q_i VW,
	\\
	\begin{tikzpicture}[baseline=0,scale=.8]
		\draw[dotted,thick] (0,0) circle [radius=1];
		\foreach \t/\l in {45/A,135/B,225/C,315/D}{
			\coordinate (\l) at (\t:1);
			\draw[very thick] (\t-15:1) arc(\t-15:\t+15:1);
		}
		\draw[thick] (A) to[bend left] (B); 
		\fill (A) circle (1.8pt); \fill (B) circle (1.8pt);
		\node at (-2,0) {$D_{R5} = $};
	\end{tikzpicture}
	\ &= Q_0^2 ATUVW + \sum b_i A Q_i TUV Q_i W,
	\\
	\begin{tikzpicture}[baseline=0,scale=.8]
		\draw[dotted,thick] (0,0) circle [radius=1];
		\foreach \t/\l in {45/A,135/B,225/C,315/D}{
			\coordinate (\l) at (\t:1);
			\draw[very thick] (\t-15:1) arc(\t-15:\t+15:1);
		}
		\draw[thick] (C) to[bend left] (D); 
		\fill (D) circle (1.8pt); \fill (C) circle (1.8pt);
		\node at (-2,0) {$D_{R6} = $};
	\end{tikzpicture}
	\ &= Q_0^2 ATUVW + \sum b_i AT Q_i U Q_i VW.
\end{align*}
Therefore, \eqref{DR1mDR2} becomes
\begin{equation}
	D_{R1} - D_{R2} = \epsilon D_L + Q_0^2 (D_{R5} + D_{R6} - D_{R3} - D_{R4}) 
\end{equation}
which proves Proposition \ref{PROP:geta}. 

Proposition \ref{PROP:cross} can be proved similarly. 
Here we give the weight of the diagrams which do not appear in Proposition \ref{PROP:geta} and omit the details of the proof.  
The weight of the diagram on the LHS is calculated by drawing the diagram as follow:
\begin{center}
	\begin{tikzpicture}[baseline=0,scale=.8]
		\draw[dotted,thick] (0,0) circle [radius=1];
		\foreach \t/\l in {45/A,135/B,225/C,315/D}{
			\fill (\t:1) circle (1.8pt) coordinate (\l);
			\draw[very thick] (\t-15:1) arc(\t-15:\t+15:1);
		}
		\draw[thick] (A)--(C);
		\draw[thick] (B)--(D); 
		\draw[thick] ($(A)!0.75!(C)$)--($(B)!0.25!(D)$);
		\draw[very thick,->] (85:1) arc (85:95:1);
		\node at (2.6,0) {$\longmapsto$};
		\begin{scope}[xshift=5.2cm]
			\draw[very thick, ->] (-1,-1) -- (5,-1);
			\foreach \t/\l in {0/A,1.2/B,2.4/C,3.6/D}{
				\fill (\t,-1) circle (1.8pt) coordinate(\l);
			}
			\draw[thick,rounded corners] (A) -- ($(A)+(0,1.6)$)  to[bend left] ($(A)!0.41!(B)+(0,2.6)$) to[bend left] ($(B)+(0,2.2)$) -- ($(B)+(0,1.9)$) -- ($(C)+(0,1.4)$) -- (C);
			\draw[thick,rounded corners] (D) -- ($(D)+(0,1.6)$)  to[bend right] ($(D)!0.41!(C)+(0,2.6)$) to[bend right] ($(C)+(0,2.2)$) -- ($(C)+(0,1.9)$) -- ($(B)+(0,1.4)$) -- (B);
			\draw[thick] ($(B)+(0,2.1)$) -- ($(B)+(0,2.2)$) to[bend left] ($(B)!0.5!(C)+(0,2.6)$) to[bend left] ($(C)+(0,2.2)$) -- ($(C)+(0,2.1)$);
			\fill[pattern=north east lines] ($(A)!0.5!(D)+(0,.8)$) ellipse (2.5 and .4);
			\draw (-0.5,-1)--(-0.3,-0.4);  \draw (4.1,-1)--(3.9,-0.4);
		\end{scope}
	\end{tikzpicture}
\end{center}
The weight is given by
\begin{equation}
	\sum \epsilon b_i b_j (A Q_i T Q_j U  Q_j V Q_i W - A Q_i T Q_i U Q_j V Q_j W).
\end{equation}
The weights of some diagram on the RHS are given by
\begin{align}
	\begin{tikzpicture}[baseline=-3pt,scale=.8]
		\draw[dotted,thick] (0,0) circle [radius=1];
		\foreach \t/\l in {45/A,135/B,225/C,315/D}{
			\fill (\t:1) circle (1.8pt) coordinate (\l);
			\draw[very thick] (\t-15:1) arc(\t-15:\t+15:1);
		}
		\draw[thick] (A)to[bend right] (D); \draw[thick] (B) to[bend left] (C);
	\end{tikzpicture}
	\ &= Q_0^4 ATUVW + \sum b_i  Q_0^2 (ATU Q_i V Q_i W + A Q_i T Q_i UVW)
	\notag \\
	&+ \sum b_i b_j A Q_i T Q_i U Q_j V Q_j W,
	\notag \\
	\begin{tikzpicture}[baseline=-3pt,scale=.8]
		\draw[dotted,thick] (0,0) circle [radius=1];
		\foreach \t/\l in {45/A,135/B,225/C,315/D}{
			\coordinate (\l) at (\t:1);
			\draw[very thick] (\t-15:1) arc(\t-15:\t+15:1);
		}
		\draw[thick] (B) to[bend left] (C); 
		\fill (B) circle (1.8pt); \fill (C) circle (1.8pt);
	\end{tikzpicture} 
	\ &= Q_0^2 ATUVW + \sum b_i A Q_i T Q_i UVW,
	\notag \\
	\begin{tikzpicture}[baseline=-3pt,scale=.8]
		\draw[dotted,thick] (0,0) circle [radius=1];
		\foreach \t/\l in {45/A,135/B,225/C,315/D}{
			\coordinate (\l) at (\t:1);
			\draw[very thick] (\t-15:1) arc(\t-15:\t+15:1);
		}
		\draw[thick] (D) to[bend left] (A); 
		\fill (D) circle (1.8pt); \fill (A) circle (1.8pt);
	\end{tikzpicture}	
	\ &= Q_0^2 ATUVW + \sum b_i ATU Q_i V Q_i W.
\end{align}

\subsection{Proof of Theorem \ref{THM:recrel}}

This theorem can be proved in a  similar way to $sl(2)$ \cite{ChVar} and $gl(1|1)$ \cite{FOKV}, namely by the induction on the number of endpoints of chords. 
The following lemma will be used in the proof. 

\begin{lemma} \label{LEM:ten}
	The following relations hold true:
\begin{align}
	\begin{tikzpicture}[scale=0.8,baseline=-3pt]
		\draw[dotted,thick] (0,0) circle [radius=1];
		\foreach \t/\l in {45/A,135/B,225/C,315/D}{
			\coordinate (\l) at (\t:1);
			\coordinate (\l-1) at (\t-10:1);
			\coordinate (\l+1) at (\t+10:1); 
			\draw[very thick] (\t-20:1) arc (\t-20:\t+20:1);
		}
		\draw[thick] (A) to[bend left] (B+1);
		\draw[thick] (B-1) to[bend left] (C+1);
		\draw[thick] (C-1) to[bend left] (D);
		\foreach \l in {A,B+1,B-1,C+1,C-1,D}
		\fill (\l) circle (1.8pt);
	\end{tikzpicture}
	&-
	\begin{tikzpicture}[scale=0.8,baseline=-3pt]
		\draw[dotted,thick] (0,0) circle [radius=1];
		\foreach \t/\l in {45/A,135/B,225/C,315/D}{
			\coordinate (\l) at (\t:1);
			\coordinate (\l-1) at (\t-10:1);
			\coordinate (\l+1) at (\t+10:1); 
			\draw[very thick] (\t-20:1) arc (\t-20:\t+20:1);
		}
		\draw[thick] (A) to[bend left] (B+1);
		\draw[thick] (B-1) to[bend left] (C-1);
		\draw[thick] (C+1) to[bend left] (D);
		\foreach \l in {A,B+1,B-1,C+1,C-1,D}
		\fill (\l) circle (1.8pt);
	\end{tikzpicture}
	-
	\begin{tikzpicture}[scale=0.8,baseline=-3pt]
		\draw[dotted,thick] (0,0) circle [radius=1];
		\foreach \t/\l in {45/A,135/B,225/C,315/D}{
			\coordinate (\l) at (\t:1);
			\coordinate (\l-1) at (\t-10:1);
			\coordinate (\l+1) at (\t+10:1); 
			\draw[very thick] (\t-20:1) arc (\t-20:\t+20:1);
		}
		\draw[thick] (A) to[bend left] (B-1);
		\draw[thick] (B+1) to[bend left] (C+1);
		\draw[thick] (C-1) to[bend left] (D);
		\foreach \l in {A,B+1,B-1,C+1,C-1,D}
		\fill (\l) circle (1.8pt);
	\end{tikzpicture}
	+
	\begin{tikzpicture}[scale=0.8,baseline=-3pt]
		\draw[dotted,thick] (0,0) circle [radius=1];
		\foreach \t/\l in {45/A,135/B,225/C,315/D}{
			\coordinate (\l) at (\t:1);
			\coordinate (\l-1) at (\t-10:1);
			\coordinate (\l+1) at (\t+10:1); 
			\draw[very thick] (\t-20:1) arc (\t-20:\t+20:1);
		}
		\draw[thick] (A) to[bend left] (B-1);
		\draw[thick] (B+1) to[bend left] (C-1);
		\draw[thick] (C+1) to[bend left] (D);
		\foreach \l in {A,B+1,B-1,C+1,C-1,D}
		\fill (\l) circle (1.8pt);
	\end{tikzpicture}
	= \epsilon \Lambda + \epsilon (c-y) \Gamma, 
	\label{ten1}
\end{align}
%
%
\begin{align}
	\begin{tikzpicture}[scale=0.8,baseline=-3pt]
		\draw[dotted,thick] (0,0) circle [radius=1];
		\foreach \t/\l in {35/A,135/B,225/C,325/D}{
			\coordinate (\l) at (\t:1);
			\coordinate (\l-1) at (\t-10:1);
			\coordinate (\l+1) at (\t+10:1); 
			\draw[very thick] (\t-20:1) arc (\t-20:\t+20:1);
		}
		\draw[thick] (A) -- (C-1);
		\draw[thick] (B-1) to[bend left] (C+1);
		\draw[thick] (B+1) -- (D);
		\foreach \l in {A,B+1,B-1,C+1,C-1,D}
		\fill (\l) circle (1.8pt);
	\end{tikzpicture}
	&-
	\begin{tikzpicture}[scale=0.8,baseline=-3pt]
		\draw[dotted,thick] (0,0) circle [radius=1];
		\foreach \t/\l in {35/A,135/B,225/C,325/D}{
			\coordinate (\l) at (\t:1);
			\coordinate (\l-1) at (\t-10:1);
			\coordinate (\l+1) at (\t+10:1); 
			\draw[very thick] (\t-20:1) arc (\t-20:\t+20:1);
		}
		\draw[thick] (A) -- (C-1);
		\draw[thick] (B+1) to[bend left] (C+1);
		\draw[thick] (B-1) -- (D);
		\foreach \l in {A,B+1,B-1,C+1,C-1,D}
		\fill (\l) circle (1.8pt);
	\end{tikzpicture}
	-
	\begin{tikzpicture}[scale=0.8,baseline=-3pt]
		\draw[dotted,thick] (0,0) circle [radius=1];
		\foreach \t/\l in {35/A,135/B,225/C,325/D}{
			\coordinate (\l) at (\t:1);
			\coordinate (\l-1) at (\t-10:1);
			\coordinate (\l+1) at (\t+10:1); 
			\draw[very thick] (\t-20:1) arc (\t-20:\t+20:1);
		}
		\draw[thick] (A) -- (C+1);
		\draw[thick] (B-1) to[bend left] (C-1);
		\draw[thick] (B+1) -- (D);
		\foreach \l in {A,B+1,B-1,C+1,C-1,D}
		\fill (\l) circle (1.8pt);
	\end{tikzpicture}
	+
	\begin{tikzpicture}[scale=0.8,baseline=-3pt]
		\draw[dotted,thick] (0,0) circle [radius=1];
		\foreach \t/\l in {35/A,135/B,225/C,325/D}{
			\coordinate (\l) at (\t:1);
			\coordinate (\l-1) at (\t-10:1);
			\coordinate (\l+1) at (\t+10:1); 
			\draw[very thick] (\t-20:1) arc (\t-20:\t+20:1);
		}
		\draw[thick] (A) -- (C+1);
		\draw[thick] (B+1) to[bend left] (C-1);
		\draw[thick] (B-1) -- (D);
		\foreach \l in {A,B+1,B-1,C+1,C-1,D}
		\fill (\l) circle (1.8pt);
	\end{tikzpicture}
	= \epsilon \left(
	\begin{tikzpicture}[scale=0.8,baseline=-3pt]
		\draw[dotted,thick] (0,0) circle [radius=1];
		\foreach \t/\l in {45/A,135/B,225/C,315/D}{
			\coordinate (\l) at (\t:1);
			\fill (\l) circle (1.8pt);
			\draw[very thick] (\t-20:1) arc (\t-20:\t+20:1);
		}
		\draw[thick] (B) to[bend left] (C);
		\draw[thick] (A) to[bend right] (D);
	\end{tikzpicture}
	-
	\begin{tikzpicture}[scale=0.8,baseline=-3pt]
		\draw[dotted,thick] (0,0) circle [radius=1];
		\foreach \t/\l in {45/A,135/B,225/C,315/D}{
			\coordinate (\l) at (\t:1);
			\fill (\l) circle (1.8pt);
			\draw[very thick] (\t-20:1) arc (\t-20:\t+20:1);
		}
		\draw[thick] (A) to[bend left] (B);
		\draw[thick] (C) to[bend left] (D);
	\end{tikzpicture}
	\right)
	\notag \\
	& + \epsilon (c-y)
	\left(
	\begin{tikzpicture}[scale=0.8,baseline=-3pt]
		\draw[dotted,thick] (0,0) circle [radius=1];
		\foreach \t/\l in {45/A,135/B,225/C,315/D}{
			\coordinate (\l) at (\t:1);
			\draw[very thick] (\t-20:1) arc (\t-20:\t+20:1);
		}
		\draw[thick] (A) to[bend left] (B);
		\fill (A) circle (1.8pt) (B) circle (1.8pt);
	\end{tikzpicture}
	+
	\begin{tikzpicture}[scale=0.8,baseline=-3pt]
		\draw[dotted,thick] (0,0) circle [radius=1];
		\foreach \t/\l in {45/A,135/B,225/C,315/D}{
			\coordinate (\l) at (\t:1);
			\draw[very thick] (\t-20:1) arc (\t-20:\t+20:1);
		}
		\draw[thick] (C) to[bend left] (D);
		\fill (C) circle (1.8pt) (D) circle (1.8pt);
	\end{tikzpicture}
	-
	\begin{tikzpicture}[scale=0.8,baseline=-3pt]
		\draw[dotted,thick] (0,0) circle [radius=1];
		\foreach \t/\l in {45/A,135/B,225/C,315/D}{
			\coordinate (\l) at (\t:1);
			\draw[very thick] (\t-20:1) arc (\t-20:\t+20:1);
		}
		\draw[thick] (B) to[bend left] (C);
		\fill (B) circle (1.8pt) (C) circle (1.8pt);
	\end{tikzpicture}
	-
	\begin{tikzpicture}[scale=0.8,baseline=-3pt]
		\draw[dotted,thick] (0,0) circle [radius=1];
		\foreach \t/\l in {45/A,135/B,225/C,315/D}{
			\coordinate (\l) at (\t:1);
			\draw[very thick] (\t-20:1) arc (\t-20:\t+20:1);
		}
		\draw[thick] (A) to[bend right] (D);
		\fill (A) circle (1.8pt) (D) circle (1.8pt);
	\end{tikzpicture}
	\right),
	\label{ten2}
\end{align}
%
%
\begin{align}
	\begin{tikzpicture}[scale=0.8,baseline=-3pt]
		\draw[dotted,thick] (0,0) circle [radius=1];
		\foreach \t/\l in {35/A,135/B,225/C,325/D}{
			\coordinate (\l) at (\t:1);
			\coordinate (\l-1) at (\t-10:1);
			\coordinate (\l+1) at (\t+10:1); 
			\draw[very thick] (\t-20:1) arc (\t-20:\t+20:1);
		}
		\draw[thick] (A+1) -- (C+1);
		\draw[thick] (A-1) to[bend left] (B);
		\draw[thick] (C-1) to[bend left] (D);
		\foreach \l in {A-1,A+1,B,C-1,C+1,D}
			\fill (\l) circle (1.8pt);
	\end{tikzpicture}
	&-
	\begin{tikzpicture}[scale=0.8,baseline=-3pt]
		\draw[dotted,thick] (0,0) circle [radius=1];
		\foreach \t/\l in {35/A,135/B,225/C,325/D}{
			\coordinate (\l) at (\t:1);
			\coordinate (\l-1) at (\t-10:1);
			\coordinate (\l+1) at (\t+10:1); 
			\draw[very thick] (\t-20:1) arc (\t-20:\t+20:1);
		}
		\draw[thick] (A-1) -- (C+1);
		\draw[thick] (A+1) to[bend left] (B);
		\draw[thick] (C-1) to[bend left] (D);
		\foreach \l in {A-1,A+1,B,C-1,C+1,D}	
			\fill (\l) circle (1.8pt);
	\end{tikzpicture}
	-
	\begin{tikzpicture}[scale=0.8,baseline=-3pt]
		\draw[dotted,thick] (0,0) circle [radius=1];
		\foreach \t/\l in {35/A,135/B,225/C,325/D}{
			\coordinate (\l) at (\t:1);
			\coordinate (\l-1) at (\t-10:1);
			\coordinate (\l+1) at (\t+10:1); 
			\draw[very thick] (\t-20:1) arc (\t-20:\t+20:1);
		}
		\draw[thick] (A+1) -- (C-1);
		\draw[thick] (A-1) to[bend left] (B);
		\draw[thick] (C+1) to[bend left] (D);
		\foreach \l in {A-1,A+1,B,C-1,C+1,D}
			\fill (\l) circle (1.8pt);
	\end{tikzpicture}
	+
	\begin{tikzpicture}[scale=0.8,baseline=-3pt]
		\draw[dotted,thick] (0,0) circle [radius=1];
		\foreach \t/\l in {35/A,135/B,225/C,325/D}{
			\coordinate (\l) at (\t:1);
			\coordinate (\l-1) at (\t-10:1);
			\coordinate (\l+1) at (\t+10:1); 
			\draw[very thick] (\t-20:1) arc (\t-20:\t+20:1);
		}
		\draw[thick] (A-1) -- (C-1);
		\draw[thick] (A+1) to[bend left] (B);
		\draw[thick] (C+1) to[bend left] (D);
		\foreach \l in {A-1,A+1,B,C+1,C-1,D}
		\fill (\l) circle (1.8pt);
	\end{tikzpicture}
	= \epsilon \Lambda + \epsilon (c-y) \Gamma,
	\label{ten3}
\end{align}
%
%
\begin{align}
	\begin{tikzpicture}[scale=0.8,baseline=-3pt]
		\draw[dotted,thick] (0,0) circle [radius=1];
		\foreach \t/\l in {35/A,135/B,225/C,325/D}{
			\coordinate (\l) at (\t:1);
			\coordinate (\l-1) at (\t-10:1);
			\coordinate (\l+1) at (\t+10:1); 
			\draw[very thick] (\t-20:1) arc (\t-20:\t+20:1);
		}
		\draw[thick] (B-1) -- (D-1);
		\draw[thick] (A) to[bend left] (B+1);
		\draw[thick] (C) to[bend left] (D+1);
		\foreach \l in {A,B-1,B+1,C,D-1,D+1}
			\fill (\l) circle (1.8pt);
	\end{tikzpicture}
	&-
	\begin{tikzpicture}[scale=0.8,baseline=-3pt]
		\draw[dotted,thick] (0,0) circle [radius=1];
		\foreach \t/\l in {35/A,135/B,225/C,325/D}{
			\coordinate (\l) at (\t:1);
			\coordinate (\l-1) at (\t-10:1);
			\coordinate (\l+1) at (\t+10:1); 
			\draw[very thick] (\t-20:1) arc (\t-20:\t+20:1);
		}
		\draw[thick] (B+1) -- (D-1);
		\draw[thick] (A) to[bend left] (B-1);
		\draw[thick] (C) to[bend left] (D+1);
		\foreach \l in {A,B-1,B+1,C,D-1,D+1}	
			\fill (\l) circle (1.8pt);
	\end{tikzpicture}
	-
	\begin{tikzpicture}[scale=0.8,baseline=-3pt]
		\draw[dotted,thick] (0,0) circle [radius=1];
		\foreach \t/\l in {35/A,135/B,225/C,325/D}{
			\coordinate (\l) at (\t:1);
			\coordinate (\l-1) at (\t-10:1);
			\coordinate (\l+1) at (\t+10:1); 
			\draw[very thick] (\t-20:1) arc (\t-20:\t+20:1);
		}
		\draw[thick] (B-1) -- (D+1);
		\draw[thick] (A) to[bend left] (B+1);
		\draw[thick] (C) to[bend left] (D-1);
		\foreach \l in {A,B-1,B+1,C,D-1,D+1}
			\fill (\l) circle (1.8pt);
	\end{tikzpicture}
	+
	\begin{tikzpicture}[scale=0.8,baseline=-3pt]
		\draw[dotted,thick] (0,0) circle [radius=1];
		\foreach \t/\l in {35/A,135/B,225/C,325/D}{
			\coordinate (\l) at (\t:1);
			\coordinate (\l-1) at (\t-10:1);
			\coordinate (\l+1) at (\t+10:1); 
			\draw[very thick] (\t-20:1) arc (\t-20:\t+20:1);
		}
		\draw[thick] (B+1) -- (D+1);
		\draw[thick] (A) to[bend left] (B-1);
		\draw[thick] (C) to[bend left] (D-1);
		\foreach \l in {A,B-1,B+1,C,D-1,D+1}
			\fill (\l) circle (1.8pt);
	\end{tikzpicture}
	= \epsilon \Lambda + \epsilon (c-y) \Gamma 
	\label{ten4}
\end{align}
where
\begin{align}
	\Lambda &= 
	\begin{tikzpicture}[scale=0.8,baseline=-3pt]
		\draw[dotted,thick] (0,0) circle [radius=1];
		\foreach \t/\l in {45/A,135/B,225/C,315/D}{
			\coordinate (\l) at (\t:1);
			\fill (\l) circle (1.8pt);
			\draw[very thick] (\t-20:1) arc (\t-20:\t+20:1);
		}
		\draw[thick] (B) to[bend left] (C);
		\draw[thick] (A) to[bend right] (D);
	\end{tikzpicture}
	-
	\begin{tikzpicture}[scale=0.8,baseline=-3pt]
		\draw[dotted,thick] (0,0) circle [radius=1];
		\foreach \t/\l in {45/A,135/B,225/C,315/D}{
			\coordinate (\l) at (\t:1);
			\fill (\l) circle (1.8pt);
			\draw[very thick] (\t-20:1) arc (\t-20:\t+20:1);
		}
		\draw[thick] (A) -- (C);
		\draw[thick] (B) -- (D);
	\end{tikzpicture}
	\notag \\
	\Gamma &= 
	\begin{tikzpicture}[scale=0.8,baseline=-3pt]
		\draw[dotted,thick] (0,0) circle [radius=1];
		\foreach \t/\l in {45/A,135/B,225/C,315/D}{
			\coordinate (\l) at (\t:1);
			\draw[very thick] (\t-20:1) arc (\t-20:\t+20:1);
		}
		\draw[thick] (A) -- (C);
		\fill (A) circle (1.8pt) (C) circle (1.8pt);
	\end{tikzpicture}
	+
	\begin{tikzpicture}[scale=0.8,baseline=-3pt]
		\draw[dotted,thick] (0,0) circle [radius=1];
		\foreach \t/\l in {45/A,135/B,225/C,315/D}{
			\coordinate (\l) at (\t:1);
			\draw[very thick] (\t-20:1) arc (\t-20:\t+20:1);
		}
		\draw[thick] (B) -- (D);
		\fill (B) circle (1.8pt) (D) circle (1.8pt);
	\end{tikzpicture}
	-
	\begin{tikzpicture}[scale=0.8,baseline=-3pt]
		\draw[dotted,thick] (0,0) circle [radius=1];
		\foreach \t/\l in {45/A,135/B,225/C,315/D}{
			\coordinate (\l) at (\t:1);
			\draw[very thick] (\t-20:1) arc (\t-20:\t+20:1);
		}
		\draw[thick] (B) to[bend left] (C);
		\fill (B) circle (1.8pt) (C) circle (1.8pt);
	\end{tikzpicture}
	-
	\begin{tikzpicture}[scale=0.8,baseline=-3pt]
		\draw[dotted,thick] (0,0) circle [radius=1];
		\foreach \t/\l in {45/A,135/B,225/C,315/D}{
			\coordinate (\l) at (\t:1);
			\draw[very thick] (\t-20:1) arc (\t-20:\t+20:1);
		}
		\draw[thick] (A) to[bend right] (D);
		\fill (A) circle (1.8pt) (D) circle (1.8pt);
	\end{tikzpicture}
\end{align}
\end{lemma}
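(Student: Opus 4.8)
The plan is to prove all four relations \eqref{ten1}--\eqref{ten4} by a single uniform mechanism: recognize each left-hand side as a double application of the STU relation and then invoke the already-established Proposition~\ref{PROP:geta} or Proposition~\ref{PROP:cross}. In every relation the six-endpoint diagrams agree outside the four marked arcs $A,B,C,D$ (the shaded complement is common to all terms), so the identities are purely local and it suffices to track what happens on those arcs.

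First I would observe that in each relation exactly two of the four arcs carry two chord endpoints (e.g. $B$ and $C$ in \eqref{ten1}, or $A$ and $C$ in \eqref{ten3}), and that the four terms on the left are obtained by independently interchanging the two endpoints on each of these arcs, with the sign pattern $(+,-,-,+)$. This is precisely a double antisymmetrization. The STU relation \eqref{STUpic}, read as ``parallel $-$ crossed $=$ trivalent vertex,'' says that antisymmetrizing the two adjacent endpoints on one arc fuses the two chords ending there into a single interior trivalent vertex attached to $S^1$ at that arc. Applying this at both distinguished arcs converts the four-term alternating sum into one Jacobi diagram carrying two trivalent vertices joined by a single internal edge.

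Next I would identify which diagram results; this depends only on the chord connectivity. In \eqref{ten1}, \eqref{ten3} and \eqref{ten4} the chords form a ``path'' through consecutive arcs, so the two trivalent vertices attach to the \emph{adjacent} pairs $\{A,B\}$ and $\{C,D\}$ --- this is exactly the geta diagram on the left of \eqref{Geta}, up to a $90^\circ$ rotation of $S^1$. In \eqref{ten2} the connectivity instead pairs the \emph{opposite} arcs $\{A,C\}$ and $\{B,D\}$, giving the crossing diagram on the left of \eqref{cross}. I would then evaluate the fused diagram by Proposition~\ref{PROP:geta} (resp. Proposition~\ref{PROP:cross}). For \eqref{ten2} this reproduces the right-hand side of \eqref{cross} verbatim; for \eqref{ten1}, \eqref{ten3}, \eqref{ten4} one undoes the rotation by the cyclic relabeling $A\mapsto B\mapsto C\mapsto D\mapsto A$ of the four arcs, under which the right-hand side of \eqref{Geta} transforms into $\epsilon\Lambda+\epsilon(c-y)\Gamma$: the relabeling sends the pair $\{w(A\text{-}B),w(C\text{-}D)\}$ to $\{w(B\text{-}C),w(A\text{-}D)\}$ and fixes $w(A\text{-}C)+w(B\text{-}D)$, matching the definitions of $\Lambda$ and $\Gamma$.

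The main obstacle is the sign and orientation bookkeeping. I must check that the STU resolution at the two arcs produces the geta/cross diagram with the correct \emph{overall} sign rather than its negative, which requires pinning down, at each arc, which endpoint-swap is the ``parallel'' resolution and which is the ``crossed'' one in \eqref{STUpic}; these choices must be made consistently so that the product of the two antisymmetrization signs reproduces the $(+,-,-,+)$ pattern displayed on the left of \eqref{ten1}--\eqref{ten4}. A secondary point is to confirm that the cyclic relabeling used for \eqref{ten1}, \eqref{ten3}, \eqref{ten4} is a genuine symmetry of the local relation --- that is, although Proposition~\ref{PROP:geta} is proved by cutting $S^1$ at one point, it remains valid in the rotated frame because every diagram involved shares the same shaded complement. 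Once these are settled, all four relations of Lemma~\ref{LEM:ten} follow uniformly, and they feed directly into the inductive proof of Theorem~\ref{THM:recrel}.
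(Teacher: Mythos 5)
Your proposal is correct and matches the paper's own argument: the authors likewise observe that the four-term alternating sums on the left of \eqref{ten1}--\eqref{ten4} arise from applying the STU relation twice to the left-hand sides of \eqref{Geta} and \eqref{cross}, and then read off the right-hand sides from Propositions \ref{PROP:geta} and \ref{PROP:cross}. Your additional care with the $(+,-,-,+)$ sign pattern and the cyclic relabeling sending the right-hand side of \eqref{Geta} to $\epsilon\Lambda+\epsilon(c-y)\Gamma$ only makes explicit what the paper leaves implicit.
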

\begin{proof}
	The relations \eqref{ten1}, \eqref{ten3} and \eqref{ten4} are the direct consequence of Proposition \ref{PROP:geta}. The LHSs of them are obtained by applying the STU relation twice on the LHS of \eqref{Geta}. The relation \eqref{ten2} is obtained similarly from Proposition \ref{PROP:cross}. 
\end{proof}

As supposed in Theorem \ref{THM:recrel}, we take a chord ``$a$" in the chord diagram $D$ and draw $ D $ so that the chord $a$ is vertical.  
Then, each endpoint of any chord (except $a$) is either to the left or to the right  of the chord $a$.  The proof of Theorem \ref{THM:recrel} is an induction on $\ell,$ the number of endpoints on the left of $a$. We suppose that $\ell > 1, $ then there are seven configurations of chords whose endpoints are immediately to the left of $a$:

\begin{center}
	\begin{tikzpicture}[scale=.8]
		\draw[dotted,thick] (0,0) circle [radius=1];
		\foreach \t/\l in {30/A,-30/A1}{
			\coordinate (\l) at (\t:1);
			\draw[very thick] (\t-10:1) arc (\t-10:\t+10:1);
		}
		\foreach \t/\l in {90/B, -90/C}{
			\coordinate (\l) at (\t:1);
		}
		\draw[very thick] (75:1) arc (75:125:1);
		\draw[very thick] (-75:1) arc (-75:-125:1);
		\draw[thick] (B) -- (C);
		\node[left] at ($(B)!0.5!(C)$) {\footnotesize $a$};
		\draw[thick] (110:1) coordinate(D) to[bend right] (A);
			\node[below] at ($(D)!0.7!(A)$) {\footnotesize $b_1$};
		\draw[thick] (-110:1) coordinate(E) to[bend left] (A1);
			\node[above] at ($(E)!0.7!(A1)$) {\footnotesize $b_k$};
		\foreach \l in {A,A1,B,C,D,E}
			\fill (\l) circle (1.8pt);
		\node at (0,-1.7) {(i)};
		\begin{scope}[xshift=4cm]
			\draw[dotted,thick] (0,0) circle [radius=1];
			\foreach \t/\l in {160/A,-160/A1}{
				\coordinate (\l) at (\t:1);
				\draw[very thick] (\t-10:1) arc (\t-10:\t+10:1);
			}
			\foreach \t/\l in {90/B, -90/C}{
				\coordinate (\l) at (\t:1);
			}
			\draw[very thick] (75:1) arc (75:120:1);
			\draw[very thick] (-75:1) arc (-75:-120:1);
			\draw[thick] (B) -- (C);
			\node[right] at ($(B)!0.5!(C)$) {\footnotesize $a$};
			\draw[thick] (A) to[bend right] (105:1) coordinate(D);
			\draw[thick] (A1) to[bend left] (-105:1) coordinate(E);
				\foreach \l in {A,A1,B,C,D,E}
			\fill (\l) circle (1.8pt);
			\node at (0,-1.7) {(ii)};
		\end{scope}
		\begin{scope}[xshift=8cm]
			\draw[dotted,thick] (0,0) circle [radius=1];
			\foreach \t/\l in {30/A,-30/A1}{
				\coordinate (\l) at (\t:1);
				\draw[very thick] (\t-10:1) arc (\t-10:\t+10:1);
			}
			\foreach \t/\l in {90/B, -90/C}{
				\coordinate (\l) at (\t:1);
			}
			\draw[very thick] (75:1) arc (75:125:1);
			\draw[very thick] (-75:1) arc (-75:-125:1);
			\draw[thick] (B) -- (C);
			\node[left] at ($(B)!0.5!(C)$) {\footnotesize $a$};
			\draw[thick] (A) -- (-110:1) coordinate(D);
			\draw[thick] (A1) -- (110:1) coordinate(E);
			\foreach \l in {A,A1,B,C,D,E}
				\fill (\l) circle (1.8pt);
			\node at (0,-1.7) {(iii)};
		\end{scope}
		\begin{scope}[xshift=12cm]
			\draw[dotted,thick] (0,0) circle [radius=1];
			\foreach \t/\l in {150/A,-150/A1}{
				\coordinate (\l) at (\t:1);
				\draw[very thick] (\t-10:1) arc (\t-10:\t+10:1);
			}
			\foreach \t/\l in {90/B, -90/C}{
				\coordinate (\l) at (\t:1);
			}
			\draw[very thick] (75:1) arc (75:120:1);
			\draw[very thick] (-75:1) arc (-75:-120:1);
			\draw[thick] (B) -- (C);
			\node[right] at ($(B)!0.5!(C)$) {\footnotesize $a$};
			\draw[thick] (A) -- (-105:1) coordinate(D);
			\draw[thick] (A1) -- (105:1) coordinate(E);
			\foreach \l in {A,A1,B,C,D,E}
			\fill (\l) circle (1.8pt);
			\node at (0,-1.7) {(iv)};
		\end{scope}
	\end{tikzpicture}
\end{center}
\begin{center}
	\begin{tikzpicture}[scale=.8]
		\draw[dotted,thick] (0,0) circle [radius=1];
		\foreach \t/\l in {10/A,190/A1}{
			\coordinate (\l) at (\t:1);
			\draw[very thick] (\t-10:1) arc (\t-10:\t+10:1);
		}
		\foreach \t/\l in {90/B, -90/C}{
			\coordinate (\l) at (\t:1);
		}
		\draw[very thick] (75:1) arc (75:125:1);
		\draw[very thick] (-75:1) arc (-75:-125:1);
		\draw[thick] (B) -- (C);
		\node[right] at ($(B)!0.3!(C)$) {\footnotesize $a$};
		\draw[thick] (110:1) coordinate(D) to[bend left] (A1);
		\draw[thick] (-110:1) coordinate(E) to[bend left] (A);
		\foreach \l in {A,A1,B,C,D,E}
		\fill (\l) circle (1.8pt);
		\node at (0,-1.7) {(v)};
		\begin{scope}[xshift=4cm]
			\draw[dotted,thick] (0,0) circle [radius=1];
			\foreach \t/\l in {10/A,190/A1}{
				\coordinate (\l) at (\t:1);
				\draw[very thick] (\t-10:1) arc (\t-10:\t+10:1);
			}
			\foreach \t/\l in {90/B, -90/C}{
				\coordinate (\l) at (\t:1);
			}
			\draw[very thick] (75:1) arc (75:125:1);
			\draw[very thick] (-75:1) arc (-75:-125:1);
			\draw[thick] (B) -- (C);
			\node[left] at ($(B)!0.5!(C)$) {\footnotesize $a$};
			\draw[thick] (110:1) coordinate(D) to[bend right] (A);
			\draw[thick] (-110:1) coordinate(E) to[bend right] (A1);
			\foreach \l in {A,A1,B,C,D,E}
				\fill (\l) circle (1.8pt);
			\node at (0,-1.7) {(vi)};
		\end{scope}
		\begin{scope}[xshift=8cm]
			\draw[dotted,thick] (0,0) circle [radius=1];
			\foreach \t/\l in {90/A,270/B}{
				\coordinate (\l) at (\t:1);
			}
			\draw[very thick] (80:1) arc (80:130:1);
			\draw[very thick] (230:1) arc (230:280:1);
			\draw[thick] (A) -- (B);
			\node[right] at ($(A)!0.5!(B)$) {\footnotesize $a$};
			\draw[thick] (120:1) coordinate(C) to[bend left] (240:1) coordinate(D);
			\node at ($(C)!0.5!(D)$) {\footnotesize $\tilde{a}$};
			\foreach \l in {A,B,C,D,E}
				\fill (\l) circle (1.8pt);
			\node at (0,-1.7) {(vii)};
		\end{scope}
	\end{tikzpicture}
\end{center}

The case (vii) is trivial, since the recurrence relation \eqref{RecRel} holds for the chord $\tilde{a}. $ 
The cases (i) to (vi) can be proved similarly, so we consider (i). 
We suppose that there are $k$ chords intersecting the chord $a$ and let $b_1, b_k$ be the chords indicated in the diagram (i) which we denoted by $D$. 
Applying \eqref{ten1} to the diagram to obtain:
\begin{align}
	D = 
	\begin{tikzpicture}[scale=.8,baseline=-3pt]
		\draw[dotted,thick] (0,0) circle [radius=1];
		\foreach \t/\l in {27/A,-27/A1}{
			\coordinate (\l) at (\t:1);
			\draw[very thick] (\t-10:1) arc (\t-10:\t+10:1);
		}
		\foreach \t/\l in {92/B, -92/C}{
			\coordinate (\l) at (\t:1);
		}
		\draw[very thick] (75:1) arc (75:125:1);
		\draw[very thick] (-75:1) arc (-75:-125:1);
		\draw[thick] (B) -- (C);
		\node[left] at ($(B)!0.5!(C)$) {\footnotesize $a$};
		\draw[thick] (110:1) coordinate(D) to[bend right] (A);
		\node[below] at ($(D)!0.7!(A)$) {\footnotesize $b_1$};
		\draw[thick] (-110:1) coordinate(E) to[bend left] (A1);
		\node[above] at ($(E)!0.7!(A1)$) {\footnotesize $b_k$};
		\foreach \l in {A,A1,B,C,D,E}
			\fill (\l) circle (1.8pt);
	\end{tikzpicture}
	= 
	\begin{tikzpicture}[scale=.8,baseline=-3pt]
		\draw[dotted,thick] (0,0) circle [radius=1];
		\foreach \t/\l in {27/A,-27/A1}{
			\coordinate (\l) at (\t:1);
			\draw[very thick] (\t-10:1) arc (\t-10:\t+10:1);
		}
		\foreach \t/\l in {92/B, -92/C}{
			\coordinate (\l) at (\t:1);
		}
		\draw[very thick] (75:1) arc (75:125:1);
		\draw[very thick] (-60:1) arc (-60:-100:1);
		\draw[thick] (B) -- (C);
		\node[left] at ($(B)!0.5!(C)$) {\footnotesize $a$};
		\draw[thick] (110:1) coordinate(D) to[bend right] (A);
		\node[below,yshift=-1pt] at ($(D)!0.7!(A)$) {\footnotesize $b_1$};
		\draw[thick] (-77:1) coordinate(E) to[bend left] (A1);
		\foreach \l in {A,A1,B,C,D,E}
		\fill (\l) circle (1.8pt);
	\end{tikzpicture}
	+
	\begin{tikzpicture}[scale=.8,baseline=-3pt]
		\draw[dotted,thick] (0,0) circle [radius=1];
		\foreach \t/\l in {27/A,-27/A1}{
			\coordinate (\l) at (\t:1);
			\draw[very thick] (\t-10:1) arc (\t-10:\t+10:1);
		}
		\foreach \t/\l in {92/B, -92/C}{
			\coordinate (\l) at (\t:1);
		}
		\draw[very thick] (60:1) arc (60:100:1);
		\draw[very thick] (-75:1) arc (-75:-125:1);
		\draw[thick] (B) -- (C);
		\node[left] at ($(B)!0.5!(C)$) {\footnotesize $a$};
		\draw[thick] (77:1) coordinate(D) to[bend right] (A);
		\draw[thick] (-110:1) coordinate(E) to[bend left] (A1);
		\node[above,yshift=1pt] at ($(E)!0.7!(A1)$) {\footnotesize $b_k$};
		\foreach \l in {A,A1,B,C,D,E}
		\fill (\l) circle (1.8pt);
	\end{tikzpicture}
	-
	\begin{tikzpicture}[scale=.8,baseline=-3pt]
		\draw[dotted,thick] (0,0) circle [radius=1];
		\foreach \t/\l in {27/A,-27/A1}{
			\coordinate (\l) at (\t:1);
			\draw[very thick] (\t-10:1) arc (\t-10:\t+10:1);
		}
		\foreach \t/\l in {92/B, -92/C}{
			\coordinate (\l) at (\t:1);
		}
		\draw[very thick] (60:1) arc (60:100:1);
		\draw[very thick] (-60:1) arc (-60:-100:1);
		\draw[thick] (B) -- (C);
		\node[left] at ($(B)!0.5!(C)$) {\footnotesize $a$};
		\draw[thick] (77:1) coordinate(D) to[bend right] (A);
		\draw[thick] (-77:1) coordinate(E) to[bend left] (A1);
		\foreach \l in {A,A1,B,C,D,E}
		\fill (\l) circle (1.8pt);
	\end{tikzpicture}
	+ \epsilon \Lambda_{a,1k} + \epsilon (c-y) \Gamma_{a,1k} 
	\label{Dno1}
\end{align}
where we introduced $ \Lambda_{a,ij} $ and $ \Gamma_{a,ij}$ which are defined by using the notations of Theorem \ref{THM:recrel}: 
\begin{align}
	\Lambda_{a,ij} &= D^{\parallel}_{a,ij} - D^{\times}_{a,ij},
	\notag \\
	\Gamma_{a,ij} &= D_{a,ij}^{\ell r} + D_{a,ij}^{r \ell} - D_{a,ij}^{\ell \ell} - D_{a,ij}^{rr}. \label{LGdef}
\end{align}
By the induction hypothesis
\begin{align}
	\begin{tikzpicture}[scale=.8,baseline]
		\draw[dotted,thick] (0,0) circle [radius=1];
		\foreach \t/\l in {27/A,-27/A1}{
			\coordinate (\l) at (\t:1);
			\draw[very thick] (\t-10:1) arc (\t-10:\t+10:1);
		}
		\foreach \t/\l in {92/B, -92/C}{
			\coordinate (\l) at (\t:1);
		}
		\draw[very thick] (75:1) arc (75:125:1);
		\draw[very thick] (-60:1) arc (-60:-100:1);
		\draw[thick] (B) -- (C);
		\node[left] at ($(B)!0.5!(C)$) {\footnotesize $a$};
		\draw[thick] (110:1) coordinate(D) to[bend right] (A);
		\node[below,yshift=-1pt] at ($(D)!0.7!(A)$) {\footnotesize $b_1$};
		\draw[thick] (-77:1) coordinate(E) to[bend left] (A1);
		\foreach \l in {A,A1,B,C,D,E}
		\fill (\l) circle (1.8pt);
	\end{tikzpicture}
	&= \big( c-\epsilon (k-1) \big) D_a + \epsilon (c-y) \sum_{1 \leq i < k} D_{a,i} + \epsilon \sum_{1\leq i<j<k} \Lambda_{a,ij} 
	\notag \\
	&+ \epsilon (c-y) \sum_{1\leq i<j<k} \Gamma_{a,ij},
\end{align}
\begin{align}
	\begin{tikzpicture}[scale=.8,baseline]
		\draw[dotted,thick] (0,0) circle [radius=1];
		\foreach \t/\l in {27/A,-27/A1}{
			\coordinate (\l) at (\t:1);
			\draw[very thick] (\t-10:1) arc (\t-10:\t+10:1);
		}
		\foreach \t/\l in {92/B, -92/C}{
			\coordinate (\l) at (\t:1);
		}
		\draw[very thick] (60:1) arc (60:100:1);
		\draw[very thick] (-75:1) arc (-75:-125:1);
		\draw[thick] (B) -- (C);
		\node[left] at ($(B)!0.5!(C)$) {\footnotesize $a$};
		\draw[thick] (77:1) coordinate(D) to[bend right] (A);
		\draw[thick] (-110:1) coordinate(E) to[bend left] (A1);
		\node[above,yshift=1pt] at ($(E)!0.7!(A1)$) {\footnotesize $b_k$};
		\foreach \l in {A,A1,B,C,D,E}
		\fill (\l) circle (1.8pt);
	\end{tikzpicture}
	&= \big( c-\epsilon (k-1) \big) D_a + \epsilon (c-y) \sum_{1 < i \leq k} D_{a,i} + \epsilon \sum_{1 < i<j \leq k} \Lambda_{a,ij} 
	\notag \\
	&+ \epsilon (c-y) \sum_{1 < i<j \leq k} \Gamma_{a,ij},
\end{align}
\begin{align}
	\begin{tikzpicture}[scale=.8,baseline]
		\draw[dotted,thick] (0,0) circle [radius=1];
		\foreach \t/\l in {27/A,-27/A1}{
			\coordinate (\l) at (\t:1);
			\draw[very thick] (\t-10:1) arc (\t-10:\t+10:1);
		}
		\foreach \t/\l in {92/B, -92/C}{
			\coordinate (\l) at (\t:1);
		}
		\draw[very thick] (60:1) arc (60:100:1);
		\draw[very thick] (-60:1) arc (-60:-100:1);
		\draw[thick] (B) -- (C);
		\node[left] at ($(B)!0.5!(C)$) {\footnotesize $a$};
		\draw[thick] (77:1) coordinate(D) to[bend right] (A);
		\draw[thick] (-77:1) coordinate(E) to[bend left] (A1);
		\foreach \l in {A,A1,B,C,D,E}
		\fill (\l) circle (1.8pt);
	\end{tikzpicture}
	&= \big( c-\epsilon (k-1) \big) D_a + \epsilon (c-y) \sum_{1 < i < k} D_{a,i} + \epsilon \sum_{1 < i<j < k} \Lambda_{a,ij} 
	\notag \\
	&+ \epsilon (c-y) \sum_{1 < i<j < k} \Gamma_{a,ij}
\end{align}
Putting these three into \eqref{Dno1}, one may see that the recurrence relation \eqref{RecRel} holds for $D$. 

To complete the proof, we need to show the cases of $ \ell = 0 $ and $ \ell = 1.$ 
If $ \ell = 0, $ the diagram $D$ has the shape of the LHS of \eqref{isofac} and the relation \eqref{isofac} is identical to the recurrence relation. 
If $ \ell = 1, $ then there is only one chord intersecting $a$. 
We apply the STU relation and Proposition \ref{PROP:Y} to obtain:
\begin{align}
	\begin{tikzpicture}[scale=.8,baseline=-3pt]
		\draw[dotted,thick] (0,0) circle [radius=1];
		\draw[very thick] (120:1) arc (120:240:1);
		\draw[very thick] (-10:1) arc (-10:10:1);
		\draw[thick] (0:1) coordinate(A) -- (180:1) coordinate(C);
		\draw[thick] (130:1) coordinate(B) to[bend left] (230:1) coordinate(D);
		\foreach \l in {A,B,C,D}
			\fill (\l) circle (1.8pt);
		\node[right,xshift=2pt] at ($(B)!0.25!(D)$) {\footnotesize $a$};
		\node[above] at ($(A)!0.3!(C)$) {\footnotesize $b_1$};
	\end{tikzpicture}
	&=
	\begin{tikzpicture}[scale=.8,baseline=-3pt]
		\draw[dotted,thick] (0,0) circle [radius=1];
		\draw[very thick] (120:1) arc (120:240:1);
		\draw[very thick] (-10:1) arc (-10:10:1);
		\draw[thick] (0:1) coordinate(A) to[bend left] (130:1) coordinate(C);
		\draw[thick] (150:1) coordinate(B) to[bend left] (230:1) coordinate(D);
		\foreach \l in {A,B,C,D}
			\fill (\l) circle (1.8pt);
	\end{tikzpicture}
	-
	\begin{tikzpicture}[scale=.8,baseline=-3pt]
		\draw[dotted,thick] (0,0) circle [radius=1];
		\draw[very thick] (120:1) arc (120:240:1);
		\draw[very thick] (-10:1) arc (-10:10:1);
		\draw[thick] (0:1) coordinate(A) -- (180:0.5) coordinate(v);
		\draw[thick] (140:1) coordinate(B) -- (v) -- (220:1) coordinate(D);
		\foreach \l in {A,B,D}
			\fill (\l) circle (1.8pt);
	\end{tikzpicture}
	\notag \\
	&= c 
	\begin{tikzpicture}[scale=.8,baseline=-3pt]
		\draw[dotted,thick] (0,0) circle [radius=1];
		\draw[very thick] (120:1) arc (120:240:1);
		\draw[very thick] (-10:1) arc (-10:10:1);
		\draw[thick] (0:1) coordinate(A) -- (180:1) coordinate(B);
		\foreach \l in {A,B}
			\fill (\l) circle (1.8pt);
	\end{tikzpicture}
	-\epsilon 
	\left(
		\begin{tikzpicture}[scale=.8,baseline=-3pt]
			\draw[dotted,thick] (0,0) circle [radius=1];
			\draw[very thick] (120:1) arc (120:240:1);
			\draw[very thick] (-10:1) arc (-10:10:1);
			\draw[thick] (0:1) coordinate(A) -- (180:1) coordinate(B);
			\foreach \l in {A,B}
			\fill (\l) circle (1.8pt);
		\end{tikzpicture}
		- (c-y)
		\begin{tikzpicture}[scale=.8,baseline=-3pt]
			\draw[dotted,thick] (0,0) circle [radius=1];
			\draw[very thick] (120:1) arc (120:240:1);
			\draw[very thick] (-10:1) arc (-10:10:1);
		\end{tikzpicture}
	\right)
	\notag\\
	&= (c-\epsilon) D_a + \epsilon (c-y) D_{a,1}.
\end{align} 
This is identical to the recurrence relation \eqref{RecRel}. 

\section{Deframed $A1_{\epsilon}$ universal weight system} \label{SEC:deframing}
\setcounter{equation}{0}

So far we have discussed the universal weight system $w$ for framed knots. 
In this section, we consider a universal weight system $\bar{w}$ of ordinary (unframed) knots and show the following relation:
\begin{thm} \label{THM:wbarw}
	For any chord diagram $ D \in \mathcal{A}$
	\begin{equation}
		\bar{w}(h) = w(c=0,h). \label{wbarw}
	\end{equation}
\end{thm}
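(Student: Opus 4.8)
The plan is to derive \eqref{wbarw} from the standard deframing of weight systems, using two facts that are already in hand: an isolated chord evaluates to the Casimir $c$ (equation \eqref{isofac}), and the framed universal weight system $w$ is multiplicative. First I would check multiplicativity under connected sum: cutting $D_1\# D_2$ along the arc where the two diagrams are joined expresses $w(D_1\# D_2)$ as the product $w(D_1)\,w(D_2)$ read along the line, and because both factors lie in the commutative algebra $Z(\mathcal U(A1_\epsilon))$ the order is immaterial, so $w$ is an algebra homomorphism into the center. Writing $\Theta$ for the one-chord diagram, \eqref{isofac} becomes $w(\Theta)=c$, and by multiplicativity $w\big(\Theta^{m}D'\big)=c^{m}\,w(D')$ for any diagram $D'$.

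Next I would bring in the deframing isomorphism \cite{ChDuMo,BN1}: the space $\bar{\mathcal A}$ is the quotient of $\mathcal A$ by the 1T relation, and $\mathcal A\cong\bar{\mathcal A}\otimes\mathbb K[\Theta]$ as Hopf algebras, so that $w$ is the framing completion of its deframing $\bar w$. Concretely this is the pair of mutually inverse relations
\[
w(D)=\sum_{S\subseteq\{1,\dots,n\}} c^{|S|}\,\bar w(D\setminus S),
\qquad
\bar w(D)=\sum_{S\subseteq\{1,\dots,n\}} (-1)^{|S|}c^{|S|}\,w(D\setminus S),
\]
where $\{1,\dots,n\}$ are the chords of $D$ and $D\setminus S$ deletes those in $S$; the second identity is exactly $w$ applied to the deframing combination $\mathrm{defr}(D)=\sum_S(-1)^{|S|}\Theta^{|S|}(D\setminus S)$ after using $w\big(\Theta^{|S|}(D\setminus S)\big)=c^{|S|}w(D\setminus S)$, and Möbius inversion over the Boolean lattice of subsets turns either relation into the other. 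Thus the theorem reduces to showing that the completion relation collapses under the substitution $c=0$ to $\bar w(D)=w(D)|_{c=0}$.

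The collapse follows once one knows that $\bar w$ takes values free of $c$, i.e. in $\mathbb K[y]\subset Z(\mathcal U(A1_\epsilon))$: substituting $c=0$ into the first (completion) relation annihilates every term with $S\neq\emptyset$ and leaves only the $S=\emptyset$ term $\bar w(D)$, which gives \eqref{wbarw} at once. Equivalently, the completion relation says that the entire $c$-dependence of $w(D)$ is the framing contribution produced by trivializing subsets of chords into isolated chords, each worth a factor $c$, so that setting $c=0$ retains only the fully interacting term $\bar w(D)$. The main obstacle is therefore establishing this $c$-freeness of the deframed weight, i.e. that in the construction of \S\ref{SEC:UWSL} the Casimir $c$ can enter only through an isolated-chord (Casimir) loop and never through a genuinely interacting sub-diagram. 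For $A1_\epsilon$ this can be read off from the explicit contraction rules \eqref{ftilddef} and \eqref{SforZ22} together with the recurrence of Theorem \ref{THM:recrel}, paralleling the deframings of the $sl(2)$ and $gl(1|1)$ weight systems in \cite{ChVar,FOKV}.
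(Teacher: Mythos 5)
Your overall strategy is the same as the paper's: pass to the deframing projector, reduce the theorem to the statement that the deframed weight $\bar w$ is independent of $c$, and then observe that setting $c=0$ kills every term except the one with no isolated-chord factors. Your two displayed relations are exactly $w\circ\phi$ written out subset-by-subset (note $\tfrac{1}{k!}s^k(D)=\sum_{|S|=k}D\setminus S$ and $w(\theta^k D')=c^kw(D')$ by \eqref{isofac}), and the M\"obius inversion you invoke is correct, although it is not actually needed: one can set $c=0$ directly in $\bar w(D)=\sum_S(-1)^{|S|}c^{|S|}w(D\setminus S)$ and use $c$-freeness of $\bar w$ to conclude $\bar w(D)=\bar w(D)|_{c=0}=w(D)|_{c=0}$, which is precisely the paper's final step.

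The genuine gap is that you never prove the one statement that carries all the content, namely the $c$-freeness of $\bar w$. You say it ``can be read off from the explicit contraction rules \eqref{ftilddef} and \eqref{SforZ22} together with the recurrence of Theorem \ref{THM:recrel}'', but this is an assertion, not an argument: the recurrence \eqref{RecRel} manifestly introduces factors of $c$ at every stage, and nothing in the contraction rules by themselves prevents $c$ from surviving in $w(\phi(D))$. The paper closes this gap with Lemma \ref{LEM:sder}, the identity $\tfrac{\partial}{\partial c}w(D)=w(s(D))$, proved by a careful induction that differentiates the recurrence and matches the result term-by-term against $w(s(D))$ expanded via \eqref{sDs} (this requires tracking how $s$ acts on the auxiliary diagrams $\Gamma_{a,ij}$ and $\Lambda_{a,ij}$, including the extra $-\Lambda_{a,ij}$ term in $s(\Gamma_{a,ij})$). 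Once that lemma is in hand, $c$-freeness follows either from the telescoping of $\sum_k\tfrac{(-1)^k}{k!}c^k\partial_c^k w(D)$ or from $\partial_c\bar w=w\circ s\circ\phi=0$. Without an argument of this kind your proof does not go through; supplying Lemma \ref{LEM:sder} (or an equivalent) is the actual work of the theorem.
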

The same relation holds for $gl(1|1)$ weight system \cite{FOKV} and, indeed, one may prove \eqref{wbarw} in a similar way to the case of $gl(1|1)$. 
Namely, by using the deframing operator and Lemma \ref{LEM:sder} below.

Let us recall the deframing operator introduced in \cite{KrSpAi}. 
First, two maps needed to define the deframing operator.
\begin{DEF}
	The linear map $ s : \mathcal{A}_n \to \mathcal{A}_{n-1}$ is defined by
	\begin{equation}
		s(D) = \sum_a D_a
	\end{equation}
	where the sum runs over all chords $a$ in $D$ and $ D_a = D - a. $ If $D \in \mathcal{A}_0$, i.e., $D$ has no chord, then $ s(D) = 0.$ 
	
	The linear map $ \theta : \mathcal{A}_n \to \mathcal{A}_{n+1} $ takes connected sum of $ D \in \mathcal{A}_n$ and the chord diagram with a single chord. 
\end{DEF}
It is easy to see that the maps $ s , \theta $ preserve the 4T relation and $s$ satisfies the Leibniz rule $s(D\cdot D') = s(D) \cdot D' + D\cdot s(D'). $ 

\begin{prop}[\cite{KrSpAi}]
	Define the map $ \phi : \mathcal{A}_n \to \mathcal{A}_n$ by
	\begin{equation}
		\phi = \sum_{k=1}^n \frac{(-1)^k}{k!} \theta^k\cdot s^k. \label{phidef}
	\end{equation}
	Then, we have
	\begin{enumerate}
		\item $ s\circ \phi = 0 $
		\item $ \phi^2 = \phi $
		\item $ \phi $ is the deframing operator:
		\begin{equation}
			\phi\left(
			 	\begin{tikzpicture}[baseline,scale=.8]
			 		\draw[dotted,thick] (0,0) circle [radius =1];
			 		\draw[very thick] (120:1) arc (120:240:1);
			 		\draw[thick] (135:1) coordinate(A) to[bend left] (225:1) coordinate(B);
			 		\foreach \l in {A,B}
			 			\fill (\l) circle (1.8pt);
			 	\end{tikzpicture}
			\right) = 0.
		\end{equation}
	\end{enumerate}
\end{prop}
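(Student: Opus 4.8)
The entire proposition rests on a single commutation relation between $s$ and $\theta$, so the plan is to establish that first and then read off the three assertions as formal consequences. Writing $\Theta$ for the one-chord diagram and $\mathbf{1}$ for the empty diagram (the unit of $\mathcal{A}_0$), the map $\theta$ is multiplication by $\Theta$, while $s$ is a degree $-1$ derivation with $s(\Theta)=\mathbf{1}$, since removing the unique chord of $\Theta$ leaves the empty diagram. Applying the Leibniz rule quoted just above to $\theta(D)=\Theta\cdot D$ gives, for every $D$,
\[
	s\,\theta(D)=s(\Theta)\cdot D+\Theta\cdot s(D)=D+\theta\, s(D),
\]
that is, $s\theta=\mathrm{id}_{\mathcal{A}}+\theta s$. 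Thus $(s,\theta)$ is a Heisenberg pair acting on the graded space $\mathcal{A}$, which behaves like a lowest-weight module in that $s$ annihilates the bottom piece $\mathcal{A}_0$. (Here I read the sum in \eqref{phidef} as including the $k=0$ term $\mathrm{id}$; this is forced by parts (1) and (3), and on $\mathcal{A}_n$ the sum is finite since $s^{k}=0$ for $k>n$.)

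From $s\theta=\mathrm{id}+\theta s$ a one-line induction yields $s\,\theta^{k}=k\,\theta^{k-1}+\theta^{k}s$, hence $s\,\theta^{k}s^{k}=k\,\theta^{k-1}s^{k}+\theta^{k}s^{k+1}$. To prove (1) I would substitute this into $s\phi$ and telescope:
\begin{align*}
	s\phi &= \sum_{k\ge 0}\frac{(-1)^{k}}{k!}\big(k\,\theta^{k-1}s^{k}+\theta^{k}s^{k+1}\big) \\
	&= \sum_{k\ge 1}\frac{(-1)^{k}}{(k-1)!}\theta^{k-1}s^{k}+\sum_{k\ge 0}\frac{(-1)^{k}}{k!}\theta^{k}s^{k+1}=0,
\end{align*}
the two sums cancelling after the index shift $k\mapsto k+1$ in the first. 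All sums are finite on each $\mathcal{A}_n$, so there is no convergence or boundary subtlety to worry about.

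Parts (2) and (3) then follow with no computation of substance. For (2), relation (1) gives $\mathrm{Im}\,\phi\subseteq\ker s$; and if $s(x)=0$ then $s^{k}(x)=0$ for all $k\ge 1$, so only the $k=0$ term of $\phi$ survives and $\phi(x)=x$. Hence $\phi$ is the identity on $\ker s$, and combining the two facts yields $\phi^{2}=\phi$. For (3), since $s^{2}(\Theta)=s(\mathbf{1})=0$, only the $k=0,1$ terms act and $\phi(\Theta)=\Theta-\theta\,s(\Theta)=\Theta-\theta(\mathbf{1})=\Theta-\Theta=0$; in fact the same telescoping shows $\phi\theta=0$, so $\phi$ annihilates every diagram containing an isolated chord, which is the full deframing statement. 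The only genuinely delicate point is the opening one, namely checking that $s$ and $\theta$ descend to the $4T$-classes (granted in the text) and that $s(\Theta)=\mathbf{1}$ with the correct unit normalization; after that, everything is routine operator bookkeeping in the Heisenberg algebra generated by $s$ and $\theta$.
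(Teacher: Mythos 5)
Your proof is correct, and it is worth noting that the paper itself offers no proof of this proposition at all --- it is imported verbatim from \cite{KrSpAi} --- so your argument is the only complete one on the table, and it is the standard one: establish the Heisenberg relation $s\theta=\mathrm{id}+\theta s$ from the Leibniz rule together with $s(\Theta)=\mathbf{1}$, deduce $s\,\theta^{k}=k\,\theta^{k-1}+\theta^{k}s$, and telescope. Two of your side remarks deserve emphasis. First, you correctly diagnosed that the lower limit $k=1$ in \eqref{phidef} must be read as $k=0$: without the identity term, $s\circ\phi=-s\neq 0$ and $\phi(\Theta)=-\Theta\neq 0$, and indeed the paper's own proof of Theorem \ref{THM:wbarw} silently writes the sum as $\sum_{k=0}^{n}$, confirming the misprint. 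Second, your final step --- from $\phi\theta=0$ to ``$\phi$ annihilates every diagram containing an isolated chord'' --- tacitly uses the fact that a diagram with an isolated chord equals $\theta(D_a)$ in $\mathcal{A}_n$, i.e.\ that an isolated chord factors out as a connected sum modulo 4T. This is a genuine (if standard) lemma, a consequence of the well-definedness of the product on $\mathcal{A}$ from \cite{BN1}, and it is the same fact the paper invokes at the level of weights in \eqref{isofac}; a one-line acknowledgment of it would make your part (3) airtight. Everything else --- the finiteness of all sums on $\mathcal{A}_n$ since $s^{k}=0$ for $k>n$, the identity $\phi|_{\ker s}=\mathrm{id}$ giving $\phi^{2}=\phi$ from part (1) --- is accurate as written.
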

It follows immediately that
\begin{equation}
	\bar{w}(D) = w (\phi(D)), \quad D \in \mathcal{A}_n \label{wbarphi}
\end{equation}
To calculate the RHS, the following lemma, the same also holds for $gl(1|1)$ \cite{FOKV}, is crucial:
\begin{lemma} \label{LEM:sder}
	For any chord diagram $ D \in \mathcal{A}_n$
	\begin{equation}
		\frac{\partial}{\partial c} w(D) = w(s(D)).  \label{sder}
	\end{equation}
\end{lemma}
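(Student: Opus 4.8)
The plan is to prove \eqref{sder} by induction on the number $n$ of chords of $D$, using the recurrence relation \eqref{RecRel} of Theorem \ref{THM:recrel} as the engine. The base case $n=0$ is immediate: the empty diagram has $w(D)=1$ and $s(D)=0$, so both sides vanish. For the inductive step I fix a chord $a$ in $D$, let $b_1,\dots,b_k$ be the chords meeting $a$ and $d_1,\dots,d_m$ the remaining ones (so $n=1+k+m$), and apply \eqref{RecRel} to write $w(D)$ through diagrams each having strictly fewer than $n$ chords: $D_a$ and the $D_{a,ij}^{\parallel},D_{a,ij}^{\times}$ have $n-1$ chords, while $D_{a,i}$ and the single-added-chord diagrams have $n-2$.

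First I would differentiate \eqref{RecRel} with respect to $c$. By the product rule, every term in which $\partial_c$ hits a weight factor $w(D')$ can be rewritten, using the induction hypothesis, as $\partial_c w(D')=w(s(D'))$, since each such $D'$ has fewer chords. The terms in which $\partial_c$ hits the explicit polynomial coefficients $(c-\epsilon k)$ and $\epsilon(c-y)$ leave the ``source'' contribution
\[
w(D_a)+\epsilon\sum_i w(D_{a,i})+\epsilon\sum_{i<j}w(\Gamma_{a,ij}),
\]
with $\Lambda_{a,ij},\Gamma_{a,ij}$ the combinations \eqref{LGdef}. This yields a closed formula for $\partial_c w(D)$ as this source plus $w\circ s$ applied to the lower diagrams of \eqref{RecRel}.

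The other half is to expand $w(s(D))$ independently. Writing $s(D)=D_a+\sum_p(D-b_p)+\sum_q(D-d_q)$ and applying \eqref{RecRel} to each $w(D-b)$ with $a$ again as the distinguished chord, I obtain a second expansion; the key structural point is that $D-b_p$ has only $k-1$ chords meeting $a$ whereas $D-d_q$ still has $k$. Matching the two expansions rests on three bookkeeping facts. (i) The shift $k\mapsto k-1$ in the leading coefficient for the $b_p$-terms is exactly compensated by the explicit $\epsilon\sum_i w(D_{a,i})$ source term, since $(c-\epsilon k)+\epsilon=c-\epsilon(k-1)$. (ii) Deleting one of the two \emph{new} chords of $D_{a,ij}^{\parallel}$ (resp. $D_{a,ij}^{\times}$) leaves precisely $D_{a,ij}^{rr}$ or $D_{a,ij}^{\ell\ell}$ (resp. $D_{a,ij}^{r\ell}$ or $D_{a,ij}^{\ell r}$), so the new-chord part of $\sum_{i<j}w(s(\Lambda_{a,ij}))$ assembles into $-\epsilon\sum_{i<j}w(\Gamma_{a,ij})$ and cancels the explicit $\Gamma$ source term. (iii) In $s(D_{a,ij}^{\ell r})$ and its companions, deleting the single new chord returns $D_{a,ij}$ in each case, and these cancel in the alternating $\Gamma$ combination, so only old-chord deletions survive; together with the old-chord parts of (ii) they reshuffle by Fubini into exactly the inductive $w(s(\cdot))$ sums.

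The main obstacle is entirely this matching: for each auxiliary diagram in \eqref{RecRel} one must separate deletions of the newly introduced chords from deletions of the inherited chords, and check that the former reproduce precisely the explicit $c$-derivative source terms (with correct signs) while the latter reproduce the inductive $w(s(\cdot))$ terms. The identifications in (ii) — reading $D_{a,ij}^{\parallel}-(\text{left chord})=D_{a,ij}^{rr}$, and so on, directly from the endpoint description in Theorem \ref{THM:recrel} — are the crux, since they are what convert the non-inductive part of $w(s(D))$ into the derivatives of the coefficients. I expect no analytic difficulty; the risk is purely combinatorial accounting, which is why I would organize the computation by the trichotomy (leading term / single-chord $D_{a,i}$ level / two-chord $\Lambda,\Gamma$ level) and verify the three facts above in turn.
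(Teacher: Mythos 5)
Your proposal is correct and follows essentially the same route as the paper: induction on the number of chords, differentiation of the recurrence \eqref{RecRel} in $c$ (with $y$ held fixed) combined with the induction hypothesis, and an independent expansion of $w(s(D))$ by applying \eqref{RecRel} to each $D-b$, matched through exactly the identities you list in (i)--(iii), which are the paper's equations \eqref{sDs}. If anything your bookkeeping is stated more carefully: your observation that deleting a new chord of $D^{\parallel}_{a,ij}$ or $D^{\times}_{a,ij}$ produces the one-new-chord diagrams, so that $s(\Lambda_{a,ij})$ carries the $-\Gamma_{a,ij}$ correction while $s(\Gamma_{a,ij})$ has none, is the consistent version of what the paper writes with $\Lambda$ and $\Gamma$ apparently interchanged in \eqref{delwd} and \eqref{sDs}.
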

\begin{proof}
	We prove by induction on the order $n$ of the diagram (number of chords). If $n=1$, \eqref{sder} is verified immediately.  For a digram $ D \in \mathcal{A}_n,$ we use the recurrence relation \eqref{RecRel} of Theorem \ref{THM:recrel}
	\begin{align}
		\frac{\partial}{\partial c} w(D) &= w(D_a) + (c-\epsilon k) \frac{\partial}{\partial c} w(D_a) + \epsilon \sum_i w(D_{a,i}) + \epsilon (c-y) \sum_i \frac{\partial}{\partial c} w(D_{a,i})
		\notag \\
		&+ \epsilon \sum_{i<j} \frac{\partial}{\partial c} w(\Gamma_{a,ij}) + \epsilon \sum_{i<j} w(\Lambda_{a,ij}) + \epsilon (c-y) \sum_{i<j} \frac{\partial}{\partial c} w(\Lambda_{a,ij})
		\notag \\
		&= w(D_a) + (c-\epsilon k)  w(s(D_a)) + \epsilon \sum_i w(D_{a,i}) + \epsilon (c-y) \sum_i  w(s(D_{a,i}))
		\notag \\
		&+ \epsilon \sum_{i<j}  w(s(\Gamma_{a,ij})) + \epsilon \sum_{i<j} w(\Lambda_{a,ij}) + \epsilon (c-y) \sum_{i<j}  w(s(\Lambda_{a,ij}))
		\label{delwd}
	\end{align}
	where $ \Gamma_{a,ij}, \Lambda_{a,ij}$ are defined in \eqref{LGdef} and the second equality is due to the induction hypothesis. 
	
	We denote the chords intersecting $a$ by latin letters, e.g., $ b_i, b_j, $ and those not intersecting $a$ by greek letters, e.g., $ b_{\mu}, b_{\nu}.$ 
	\begin{center}
	\begin{tikzpicture}[scale=0.8]
		\draw[very thick] (0,0) circle [radius=1.2];
		\foreach \t/\label in {30/A,90/B,150/C,210/D,270/E,330/F}{
			\coordinate (\label) at ({1.2*cos(\t)},{1.2*sin(\t)});
			\fill (\label) circle (1.8pt);
		}
		\draw[thick] (B) node[above] {$a$}--(E);
		\draw[thick,] (A) ..controls ($(A)!0.5!(C)+(0,-0.15)$).. (C) node[pos=0.7,yshift=7pt] {$b_i$};
		\draw[thick,bend left=10] (D)..controls ($(D)!0.5!(F)+(0,0.15)$).. (F) node[pos=0.3,yshift=7pt] {$b_j$};
		\draw[thick] (0:1.2) coordinate (G) to[bend right] (-60:1.2) coordinate(H);
		\foreach \l in {G,H}
			\fill (\l) circle (1.8pt);
		\node[right] at (G)  {$b_{\mu}$}; 
		\node at (-2,0) {$D = $};
	\end{tikzpicture}
	\end{center}
	Then, we have
	\begin{align}
		s(D_a) &= \sum_i D_{a,i} + \sum_{\mu} D_{a,\mu},
		\notag \\
		s(D_{a,i}) &= \sum_{j} D_{a,ij} + \sum_{\mu} D_{a,i\mu},
		\notag \\
		s(\Gamma_{a,ij}) &= \sum_{\ell} \Gamma_{a,ij\ell} + \sum_{\mu} \Gamma_{a,ij\mu} - \Lambda_{a,ij},
		\notag \\
		s(\Lambda_{a,ij}) &= \sum_{\ell} \Lambda_{a,ij\ell} + \sum_{\mu} \Lambda_{a,ij\mu}. 
		\label{sDs}
	\end{align}
	
	On the other hand, we have
	\begin{equation}
		w(s(D)) = w(D_a) + \sum_i w(D_i) + \sum_{\mu} w(D_{\mu}). \label{wsd}
	\end{equation}
	Using the recurrence relation \eqref{RecRel}, we get the followings:
	\begin{align}
		w(D_i) &= \big( c-\epsilon (k-1) \big) w(D_{a,i}) + \epsilon (c-y) \sum_j w(D_{a,ij}) + \epsilon \sum_{j < \ell} w(\Gamma_{a,ij\ell}) 
		\notag \\
		&+ \epsilon (c-y) \sum_{j < \ell} w(\Lambda_{a,ij\ell}),
		\notag \\
		w(D_{\mu}) &= ( c-\epsilon k) w(D_{a,i}) + \epsilon (c-y) \sum_i w(D_{a,i\mu}) + \epsilon \sum_{i<j} w(\Gamma_{a,ij\mu}) 
		\notag \\
		&+ \epsilon (c-y) \sum_{i<j} w(\Lambda_{a,ij\mu}).
	\end{align}
	Substituting these into \eqref{wsd} and using \eqref{sDs}, one may see that $w(s(D))$ is identical to \eqref{delwd}. 
\end{proof} 

\begin{proof}[Proof of Theorem \ref{THM:wbarw}]
	\begin{align}
		\bar{w}(D) &\stackrel{\eqref{wbarphi}}{=} w(\phi(D)) 
		\stackrel{\eqref{phidef}}{=} \sum_{k=0}^n \frac{(-1)^k}{k!} w(\theta^k) w(s^k(D)) 
		\stackrel{\eqref{sder}}{=} \sum_{k=0}^n \frac{(-1)^k}{k!} c^k \frac{\partial^k}{\partial c^k} w(D).
	\end{align}
	It is immediate from this expression that $\bar{w}(D)$ is independent of $c:$
	\begin{equation}
		\frac{\partial }{\partial c} \bar{w}(D) = 0. 
	\end{equation}
	Therefore,
	\begin{align}
		\bar{w}(D) = \bar{w}(D) \Big|_{c=0} =  \left. \sum_{k=0}^n \frac{(-1)^k}{k!} c^k \frac{\partial^k}{\partial c^k} w(D)\right|_{c=0} =  w(D) \Big|_{c=0}.
	\end{align}
\end{proof}

An immediate corollary of Theorem \ref{THM:wbarw} is the recurrence relation for the weight systems of ordinary knots:
\begin{cor}
	\begin{align}
		\bar{w}(D) &= - \epsilon \left(
		  k \bar{w}(D_a) + y \sum_i \bar{w}(D_{a,i}) - \sum_{i<j} \bar{w}(\Gamma_{a,ij}) + y \sum_{i<j} \bar{w}(\Lambda_{a,ij}).
		\right)
	\end{align}
\end{cor}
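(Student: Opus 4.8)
The plan is to obtain the corollary by specializing the framed recurrence \eqref{RecRel} of Theorem \ref{THM:recrel} at $c=0$ and invoking the deframing identity \eqref{wbarw} of Theorem \ref{THM:wbarw}. First I would write \eqref{RecRel} in the compact form provided by the abbreviations \eqref{LGdef},
\[
	w(D) = (c-\epsilon k)\,w(D_a) + \epsilon(c-y)\sum_i w(D_{a,i}) + \epsilon\sum_{i<j} w(\Gamma_{a,ij}) + \epsilon(c-y)\sum_{i<j} w(\Lambda_{a,ij}),
\]
viewing both sides as elements of the polynomial ring $\mathbb{K}[c,y]$ attached to the diagrams $D$, $D_a$, $D_{a,i}$, $\Gamma_{a,ij}$ and $\Lambda_{a,ij}$.

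The key step is to evaluate this polynomial identity at $c=0$. Since Theorem \ref{THM:wbarw} gives $\bar{w}(D')=w(D')\big|_{c=0}$ for every chord diagram $D'$, I would apply it to $D$ and to each diagram on the right simultaneously. Under $c=0$ the scalar coefficients specialize as $(c-\epsilon k)\big|_{c=0}=-\epsilon k$ and $\epsilon(c-y)\big|_{c=0}=-\epsilon y$, whereas the bare coefficient $\epsilon$ in the $\Gamma$-sum is unchanged. Replacing each $w(\cdot)\big|_{c=0}$ by $\bar{w}(\cdot)$ and factoring out $-\epsilon$ then gives
\[
	\bar{w}(D) = -\epsilon\left( k\,\bar{w}(D_a) + y\sum_i \bar{w}(D_{a,i}) - \sum_{i<j}\bar{w}(\Gamma_{a,ij}) + y\sum_{i<j}\bar{w}(\Lambda_{a,ij}) \right),
\]
which is exactly the asserted relation.

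The one point that requires justification is that the substitution $c=0$ may be carried out term by term. This is legitimate because \eqref{RecRel} holds identically in $c$ and $y$, so evaluation at $c=0$ is a ring homomorphism that commutes with the finite $\mathbb{K}$-linear combination on the right-hand side, and because Theorem \ref{THM:wbarw} furnishes $w\big|_{c=0}=\bar{w}$ uniformly over all the diagrams appearing. I therefore expect no genuine obstacle: the entire content is packaged in Theorem \ref{THM:recrel} and Theorem \ref{THM:wbarw}, and once the recurrence is rewritten through \eqref{LGdef} and specialized at $c=0$, the corollary is immediate.
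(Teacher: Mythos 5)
Your proposal is correct and is exactly the paper's (implicit) argument: the paper offers no separate proof, treating the corollary as immediate from setting $c=0$ in the recurrence \eqref{RecRel} and invoking $\bar{w}=w|_{c=0}$ from Theorem \ref{THM:wbarw}, which is precisely what you do. One bookkeeping caveat: your compact rewriting of \eqref{RecRel} attaches the bare $\epsilon$ to $\sum_{i<j}w(\Gamma_{a,ij})$ and $\epsilon(c-y)$ to $\sum_{i<j}w(\Lambda_{a,ij})$, whereas combining \eqref{RecRel} literally with the definitions \eqref{LGdef} gives the opposite assignment ($\Lambda_{a,ij}=D^{\parallel}_{a,ij}-D^{\times}_{a,ij}$ collects the two-new-chord terms carrying the bare $\epsilon$); your version agrees with the corollary as printed and with the form used in the proof of Lemma \ref{LEM:sder}, so the mismatch is an inconsistency internal to the paper rather than a flaw in your reasoning, but you should flag which convention is intended, since under the literal reading of \eqref{LGdef} the roles of $\Gamma_{a,ij}$ and $\Lambda_{a,ij}$ in the displayed corollary would have to be interchanged.
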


\section{Conclusions} \label{SEC:CR}
\setcounter{equation}{0}

We showed that color Lie algebras are a class of $S$-Lie algebras. 
This allows us to construct universal weight systems from the color Lie algebras. As an example of color Lie algebras, we took one of the minimal $\Z2$-graded Lie algebras $A1_{\epsilon}$ and studied the weight system coming from it. 
It is a polynomial in two-variables $c, h$ and the specialization of $c = 0 $ gives the weight system of unframed knots. We derived some relations such as \eqref{RecRel} for the polynomials in a similar way to $sl(2)$ and $gl(1|1)$. It then turned out that the $A1_{\epsilon}$ weight system is a hybrid of those from $sl(2) $ and $gl(1|1)$.    
This hybrid property is a reflection of the algebraic structure of $A1_{\epsilon}$, it has a one-dimensional center spanned by $H$ and $ Q_i, (i=1, 2, 3)$ are realized by $ so(3) \simeq sl(2) $ and the quaternion:
\begin{align}
	Q_i = e_i \otimes L_i, 
\end{align}
where $ e_i,$ and $L_i$ satisfy the relations
\begin{equation}
	e_i e_j = -\delta_{ij} + i \sum_k \epsilon_{ijk} e_k, \quad 
	[L_i, L_j] = \sum_k \epsilon_{ijk} L_k
\end{equation}
with $ \epsilon_{ijk} $ is the totally antisymmetric and $ \epsilon_{123} = 1. $ 

This implies that if we take an another $\Z2$-graded Lie (super)algebra, the resulting weight system will have different properties. 
An interesting example from the minimal $\Z2$-graded superalgebra is the $\Z2$-graded super-Poincar\'e algebra in $(0+1)$-dimension \cite{Bruce}. 
Most of the minimal $\Z2$-graded Lie (super)algebras are almost abelian, i.e., many (anti)commutators vanish, so we should go beyond the minimal algebras. 
The $\Z2$-graded version of $sl(2)$ and $osp(1|2)$ discussed in \cite{niktt,AizawaSegar} provides  an example of beyond the minimal ones. In particular, they have the $(1,1)$-graded Casimir $c_{11}$ in addition to the ordinary Casimir $c_{00}$ of $(0,0)$-grading. Therefore, the weight systems constructed from these will be polynomials of two variables $ c_{00} $ and $c_{11}^2$ as $ \deg c^2_{11} = (0,0). $

Another challenging problem is the use of color Lie algebras with more complex gradings such as $ \mathbb{Z}_3, \mathbb{Z}_2 \times \mathbb{Z}_4.$ 
The defining relations of these algebras are no longer (anti)commutators. 
Therefore, a precise analysis of structure and representations is required before investigating weight systems. 

The present work has established a connection between knot invariants and color Lie algebras. The connection will not be limited to the weight systems discussed here. Further studies are needed for a deeper understanding of the role of color Lie algebras in knot theory and related studies of three-dimensional manifolds.

\section*{Acknowledgments}

N. A. is supported by JSPS KAKENHI Grant Number JP23K03217. 

\section*{Data availability}
No datasets were generated or analysed during the current study

\section*{Conflict of interest}

All authors have no conflicts of interest.

\appendix
\section{Invariant bilinear forms on $\Z2$-graded Lie algebras}

Let $ V = V_{(0,0)} \oplus V_{(1,0)} \oplus V_{(0,1)} \oplus V_{(1,1)} $ be a $\Z2$-graded vector space. We order the basis of $V$ according to the degree:
\begin{equation}
	(0,0), \quad (1,0), \quad (0,1), \quad (1,1). 
\end{equation}
A linear map $ f : V \to V $ of the degree $(0,0)$ may be represented by a block matrix in which each block has a definite degree:
\begin{equation}
	 \begin{pmatrix}
		A^{(0,0)} & A^{(1,0)} & A^{(0,1)} & A^{(1,1)}
		\\
		B^{(1,0)} & B^{(0,0)} & B^{(1,1)} & B^{(0,1)} 
		\\
		C^{(0,1)} & C^{(1,1)} & C^{(0,0)} & C^{(1,0)} 
		\\
		D^{(1,1)} & D^{(0,1)} & D^{(1,0)} & D^{(0,0)} 
	\end{pmatrix}. \label{gradedmatrix}
\end{equation}
We denote the space of such linear maps by $\Z2$-$gl(V)$. 

Now let $ \g = \g_{(0,0)} \oplus \g_{(1,0)} \oplus \g_{(0,1)}  \oplus \g_{(1,1)} $ be a $\Z2$-graded Lie algebra and $ X_1, X_2, \dots $ be a basis fo $\g.$ 
We denote $ \deg(X_a) = \bm{a} $ (this abuse of notation will not cause serious confusion).  
A representation of $\g$ on $V$ is an $\Z2$-graded algebra homomorphism $ \rho : \g \to \Z2$-$gl(V)$. 
Explicitly, $\rho $ should be a linear map and it should satisfies
\begin{align}
  &	\deg(\rho(X_a)) = \deg(X_a),
	\notag \\
  &	\rho(\llbracket X_a, X_b \rrbracket ) = \llbracket \rho(X_a), \rho(X_b) \rrbracket := 
\rho(X_a) \rho(X_b) - \ph{a}{b} \rho(X_b) \rho(X_a).
\end{align}
This means that if, say $\deg(X_a) = (1,0)$, the representation matrix $\rho(X_a)$ has non-vanishing entries only at the $(1,0)$ blocks in \eqref{gradedmatrix}. 
For the $\Z2$-graded Lie algebras, the trace operation of the representation matrix is defined by 
\cite{rw2}
\begin{equation}
	tr \rho(X_a) = tr A^{(0,0)} + tr B^{(0,0)} + tr C^{(0,0)} + tr D^{(0,0)}.
\end{equation}
It follows from this definition that the trace of the $\Z2$-graded matrix has the following properties
\begin{align}
	& tr F = 0 \quad \text{for} \ \deg F \neq (0,0),
	\notag \\
	& tr(FG) = tr(GF). 
\end{align}

With these preparations, we now define bilinear forms on $\g$. 
Let $ \rho $ be an $n$-dimensional representation of $\g$ (not necessarily irreducible) and $ M $ be an $ n \times n $ $\Z2$-graded matrix of degree $\bm{m}$ which satisfies
\begin{equation}
	\llbracket M, \rho(X_a) \rrbracket = 0, \quad \forall X_a \in \g \label{MatrixM}
\end{equation}
\begin{DEF} \label{DEF:biform}
	A degree $\bm{m} $ bilinear form $ B : \g \times \g \to \mathbb{C} $ is defined by
	\begin{equation}
		B(X_a,X_b) = tr(\rho(X_a) M \rho(X_b)), \quad X_a, X_b \in \g 
	\end{equation}
\end{DEF}
\begin{prop} \label{PROP:propertybi}
	The bilinear form $B$ satisfies the followings:
	\begin{enumerate}
		\renewcommand{\labelenumi}{(\roman{enumi})}
		\item $ B(X_a,X_b) = 0 $ if $ \bm{a} + \bm{b} \neq \bm{m}$.
		\item $ B(X_a,X_b) = \ph{a}{m} B(X_b,X_a) = \ph{b}{m} B(X_b,X_a)$.
		\item $ B(\llbracket X_a, X_b \rrbracket, X_c) = \ph{b}{m} B(X_a,\llbracket X_b, X_c \rrbracket)$. 
	\end{enumerate}
	The last one is the adjoint invariance of $B$, so the bilinear form of Definition \ref{DEF:biform} is an invariant bilinear form. 
\end{prop}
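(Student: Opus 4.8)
The plan is to read everything off the definition $B(X_a,X_b)=tr(\rho(X_a)M\rho(X_b))$, using only three ingredients: the vanishing of the graded trace on matrices of nonzero degree, the cyclicity $tr(FG)=tr(GF)$, and the intertwining condition \eqref{MatrixM}, which I would first rewrite as the commutation rule $M\rho(X_a)=\ph{m}{a}\rho(X_a)M$ (the commuting factor being a sign, this is equivalent to $\rho(X_a)M=\ph{m}{a}M\rho(X_a)$). Throughout I will lean on two arithmetic facts about the alternating $\Z2$-pairing that defines the commuting factor: that it is self-annihilating, $\DP{\alpha}{\alpha}=0$, and that as an exponent of $-1$ it is symmetric, $\ph{\alpha}{\beta}=\ph{\beta}{\alpha}$.

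Property (i) is the warm-up: the matrix $\rho(X_a)M\rho(X_b)$ carries degree $\bm a+\bm m+\bm b$, so its trace must vanish unless this degree is $(0,0)$. Since every element of $\Z2$ is its own inverse, that condition is precisely $\bm a+\bm b=\bm m$, which gives (i). For (ii) I would apply cyclicity to slide $\rho(X_a)$ to the right, $B(X_a,X_b)=tr(M\rho(X_b)\rho(X_a))$, then commute $M$ past $\rho(X_b)$ to get $B(X_a,X_b)=\ph{m}{b}\,tr(\rho(X_b)M\rho(X_a))=\ph{b}{m}B(X_b,X_a)$; this is already the second equality. The first equality then follows by invoking (i): on the support of $B$ one has $\bm b=\bm m+\bm a$, and substituting this into $\DP{b}{m}$ while using $\DP{m}{m}=0$ yields $\DP{b}{m}=\DP{a}{m}$, so $\ph{b}{m}=\ph{a}{m}$ and the two phases agree.

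For the adjoint invariance (iii) I would expand each graded bracket via $\rho(\llbracket X_a,X_b\rrbracket)=\rho(X_a)\rho(X_b)-\ph{a}{b}\rho(X_b)\rho(X_a)$, turning both sides into a difference of two traces. The leading traces $tr(\rho(X_a)\rho(X_b)M\rho(X_c))$ match at once: on the right one commutes $M$ through $\rho(X_b)$, and the accumulated phase $\ph{b}{m}\ph{m}{b}=1$ cancels. The remaining two traces are brought to a common shape by cyclicity, $tr(\rho(X_a)M\rho(X_c)\rho(X_b))=tr(\rho(X_b)\rho(X_a)M\rho(X_c))$, after which the equality of the two sides reduces to the single scalar identity $\ph{b}{m}\ph{b}{c}=\ph{a}{b}$.

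The hard part — indeed the only step that is not purely mechanical — will be verifying this last phase identity, and it is here that (i) is indispensable. The relevant trace is nonzero only when $\bm m=\bm a+\bm b+\bm c$; substituting this into $\DP{b}{m}$ and using the self-annihilation $\DP{b}{b}=0$ together with the mod-$2$ symmetry $\DP{b}{a}\equiv\DP{a}{b}$ collapses $\DP{b}{m}+\DP{b}{c}$ to $\DP{a}{b}$ modulo $2$, while off the support both traces vanish and the identity is vacuous. With this bookkeeping settled, reassembling the four traces gives $B(\llbracket X_a,X_b\rrbracket,X_c)=\ph{b}{m}B(X_a,\llbracket X_b,X_c\rrbracket)$, establishing the invariance and hence the claim.
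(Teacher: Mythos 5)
Your proposal is correct and follows essentially the same route as the paper: read everything off the graded trace, use cyclicity plus the intertwining relation \eqref{MatrixM} to move $M$ past representation matrices, and reduce (iii) to the phase identity $\ph{a}{b}=\ph{b}{m}\ph{b}{c}$, which is verified on the support of the trace exactly as in the paper (using $\bm{a}+\bm{b}+\bm{c}=\bm{m}$ and $\DP{\alpha}{\alpha}=0$). The only differences are cosmetic, e.g.\ in (ii) you obtain the $\ph{b}{m}$ equality first and deduce the $\ph{a}{m}$ one from (i), whereas the paper does the reverse.
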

\begin{proof}	
\begin{enumerate}
	\renewcommand{\labelenumi}{(\roman{enumi})}
	\item Obvious from the definition of the trace operation. 
	\item We write $ A:= \rho(X_a), B := \rho(X_b)$ for better readability. Using the cyclic property of the trace
	   \begin{align*}
	   	B(X_a,X_b) &= tr(A M B) = tr(B A M) 
	   	\stackrel{\eqref{MatrixM}}{=} \ph{a}{m} tr(B M A) = \ph{a}{m} B(X_b,X_a).
	   \end{align*}
	   By the property (i), it is sufficient to consider the case of $ \bm{a} + \bm{b} = \bm{m}.$ 
	   For this case, $ \DP{a}{m} = (\bm{b} + \bm{m}) \cdot \bm{m} = \DP{b}{m} \mod (2,2)$.
	 \item  Note taht the bilinear form on the RHS is
	   \begin{align}
	   	B(X_a, \llbracket X_b, X_c \rrbracket) &= tr(A M \rho(\llbracket X_b,X_c \rrbracket))
	   	= tr(A M B C) -\ph{b}{c} tr(A M C B).
	   \end{align} 
	   Similarly, that on the LHS is 
	   \begin{align}
	   	 B(\llbracket X_a, X_b \rrbracket, X_c) 
	   	 &= tr(AB M C) - \ph{a}{b} tr(B A M C) 
	   	 \notag \\
	   	 &\stackrel{\eqref{MatrixM}}{=} \ph{b}{m} tr(A M B C)- \ph{a}{b} tr(A M C B).
	   	 \notag 
	   \end{align}
	   Noting that $ \DP{a}{b} = \bm{b}\cdot (\bm{a}+\bm{c}) + \DP{b}{c} = \bm{b} \cdot (\bm{b}+\bm{m}) + \DP{b}{c} = \DP{b}{m} + \DP{b}{c}$, one may immediately see that (iii) holds. 
\end{enumerate}
\end{proof}

 Let us now write down the invariance of $B$ in terms of the structure constants of $\g$: 
\begin{equation}
	\llbracket X_a, X_b \rrbracket = \sum_c \f{ab}{c} X_c.
\end{equation}
Introducing the notation $ B_{ab} := B(X_a, X_b)$, the invariance (iii) is written as
\begin{equation}
	\sum_d \f{ab}{d} B_{dc} = \ph{b}{m} \sum_d \f{bc}{d} B_{ad}. \label{InvB}
\end{equation}
In the case of the bilinear form $B$ is non-degenerate, we denote its inverse by $C: $ 
$ \sum_c B_{ac} C^{cb} = \delta_a^b. $ 
Note that the forms $ B $ and $ C$ have the same $\Z2$-degree. 
Then, \eqref{InvB} is equivalent to
\begin{equation}
	\sum_d C^{ad} \f{db}{c} = \ph{b}{m} \sum_d \f{bd}{a} C^{dc}. \label{InvC}
\end{equation}

The invariant bilinear form $C$ is used to introduce the $ \Z2$-graded Casimir elements. 
\begin{prop} \label{PROP:CasimirDef}
	\begin{equation}
		C^{(\bm{m})} := \sum_{ab} C^{ab} X_a X_b
	\end{equation}
	is the $\Z2$-graded Casimir element of degree $\bm{m}:$ 
	\begin{equation}
		\llbracket X_a, C^{(\bm{m})} \rrbracket = 0, \quad \forall X_a \in \g \label{CasimirRel}
	\end{equation}
\end{prop}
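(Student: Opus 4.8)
The plan is to reduce the centrality statement \eqref{CasimirRel} to the invariance identity \eqref{InvC} by a single graded Leibniz computation. First I would record the graded Leibniz rule valid in the associative ($\Z2$-graded) algebra containing $\g$: for homogeneous $X \in \g_{\bm p}$, $Y \in \g_{\bm q}$ and arbitrary $Z$,
\[
	\llbracket X, YZ \rrbracket = \llbracket X, Y \rrbracket Z + \ph{p}{q}\, Y \llbracket X, Z \rrbracket .
\]
This is immediate from $\llbracket X, Y \rrbracket = XY - \ph{p}{q} YX$ together with the multiplicativity $\epsilon(\bm p, \bm q + \bm r) = \epsilon(\bm p, \bm q)\epsilon(\bm p, \bm r)$ of the commuting factor.

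Applying this rule term by term to $C^{(\bm m)} = \sum_{ab} C^{ab} X_a X_b$ and substituting $\llbracket X_p, X_a \rrbracket = \sum_c \f{pa}{c} X_c$, I would obtain
\[
	\llbracket X_p, C^{(\bm m)} \rrbracket = \sum_{a,b,c} C^{ab} \f{pa}{c} X_c X_b + \sum_{a,b,c} \ph{p}{a} C^{ab} \f{pb}{c} X_a X_c .
\]
Relabelling the free indices so that both sums read $\sum_{i,j}(\cdots) X_i X_j$ reduces the whole statement to the numerical identity
\[
	\sum_a C^{aj} \f{pa}{i} + \ph{p}{i} \sum_b C^{ib} \f{pb}{j} = 0
\]
for all $i,j$; it suffices to annihilate the coefficient of each monomial $X_i X_j$.

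To verify this identity I would feed the invariance relation \eqref{InvC} into the first sum after applying the graded antisymmetry $\f{pa}{i} = -\ph{p}{a}\f{ap}{i}$ and the graded symmetry $C^{ab} = \ph{a}{m} C^{ba}$, which $C$ inherits from property (ii) of Proposition \ref{PROP:propertybi}. Each such move produces sign factors $\ph{a}{m}$, $\ph{p}{a}$, $\ph{p}{m}$; but since $C^{aj}$ (resp. $C^{ai}$) is nonzero only when $\bm a = \bm m + \bm j$ (resp. $\bm a = \bm m + \bm i$), additivity of $\hat{\ }\cdot\hat{\ }$ makes these signs constant over the summation, so they factor out. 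Collecting them and using the degree constraint $\bm i + \bm j = \bm p + \bm m$ carried by any surviving monomial, the residual sign collapses to $\ph{m}{m}\,\ph{p}{p}$, and this is $1$ because the commuting factor \eqref{comfac} of a $\Z2$-graded Lie algebra is antisymmetric, whence $\DP{m}{m} = \DP{p}{p} = 0$.

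I expect the only genuine difficulty to be the sign bookkeeping in this last step: one must insert \eqref{InvC} in exactly the right index slots and repeatedly exploit the two degree constraints (one from $C$ having degree $\bm m$, one from the output monomial having degree $\bm p + \bm m$) to turn index-dependent phases into global constants. The decisive point, and the only place where antisymmetry of the $\Z2$ commuting factor is truly used, is the vanishing of the diagonal values $\DP{m}{m}$ and $\DP{p}{p}$; without it the two sums would differ by an unwanted sign and $C^{(\bm m)}$ would fail to be graded central. Everything else is formal manipulation.
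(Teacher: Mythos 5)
Your proposal is correct and follows essentially the same route as the paper: expand $\llbracket X_a, C^{(\bm m)}\rrbracket$ by the graded Leibniz rule, substitute the structure constants, and annihilate the resulting coefficient identity with the invariance relation \eqref{InvC}. One minor remark: the final cancellation does not really hinge on $\DP{m}{m}=\DP{p}{p}=0$ (i.e.\ on antisymmetry of the $\Z2$ pairing) but only on the commuting-factor axiom $\epsilon(\alpha,\beta)\epsilon(\beta,\alpha)=1$, so the same bookkeeping goes through verbatim for any color Lie algebra.
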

\begin{proof} 
	\eqref{CasimirRel} can be proved by direct computation as follows:
	\begin{align*}
		\llbracket X_a, C^{(\bm{m})} \rrbracket &= \sum_{b,c} C^{bc}  \llbracket X_a,  X_b X_c \rrbracket 
		= \sum_{bc} C^{bc} ( \llbracket X_a, X_b \rrbracket X_c + \ph{a}{b} X_b \llbracket X_a, X_c \rrbracket )
		\notag \\
		&= \sum_{b,c,d} ( C^{cd} \f{ac}{b} + \ph{a}{b} \f{ac}{d} C^{bc} ) X_b X_d
		\notag \\
		&= -\ph{a}{m} \sum_{b,c,d} ( C^{bc} \f{ca}{d} - \ph{a}{m} \f{ac}{b} C^{cd} ) X_b X_d 
		\stackrel{\eqref{InvC}}{=} 0.
	\end{align*}
\end{proof}

The bilinear forms on $\Z2$-graded Lie superalgebras have been discussed in \cite{AizawaSegar} and properties similar to Proposition \ref{PROP:propertybi} have been observed. 

%
%

\end{document}